\theoremstyle{theorem}
\newtheorem{theorem}{Theorem}[section]
\newtheorem{corollary}[theorem]{Corollary}
\newtheorem{lemma}[theorem]{Lemma}
\newtheorem{proposition}[theorem]{Proposition}
\newtheorem{definition}{Definition}[section]
\newtheorem{example}{Example}[section]
\newtheorem{remark}{Remark}[section]
\numberwithin{equation}{section}
\newcommand{\s}{\sigma}
\newcommand{\R}{\text{Re}}
\DeclareSymbolFont{symbolsC}{U}{txsyc}{m}{n}
\DeclareMathSymbol{\notniFromTxfonts}{\mathrel}{symbolsC}{61}
\title{Skew analysis over quaternions. I} 
\author{Masood Aryapoor\\
\tiny{\textit{Division of Mathematics and Physics}}\\
\tiny{\textit{M\"{a}lardalen  University}}\\
\tiny{\textit{Hamngatan 15, 632 17, Eskilstuna, 
Sweden
}}

}
 \date{}
\begin{document}
 \maketitle
\begin{abstract}
\noindent
We introduce a class of rings using which we define the concept of skew regularity for quaternion-valued functions over quaternions. It is shown that the notion of skew regularity coincides with the concept of slice regularity over symmetric slice domains. Known results regarding slice-regular functions over symmetric slice domains are generalized to skew-regular functions over general symmetric domains. Furthermore, we present new results concerning slice-regular functions.     

\end{abstract}
\begin{section}{Introduction} 
This paper consists of two parts: An algebraic part and an analytical part. In the algebraic part, we introduce and study a class of rings which we shall call \textit{skew-convex function rings}.  They can be regarded as generalizations of skew polynomial rings over skew fields. In the analytical part, we use skew-convex function rings over the skew field of quaternions to develop an analytical theory over quaternions which will be called \textit{skew analysis over quaternions}. It turns that skew analysis over quaternions is closely related to the theory of slice-regular functions as introduced in \cite{Anewtheoryregularfunctionsquaternionicvariable2007} and later developed in a series of papers. For a detailed account of slice-regular functions, see \cite{Regularfunctionsofaquaternionicvariable2013} and references therein. 

In Section 2, we introduce the notion of the skew product which is a binary operation on certain sets of functions with values in a fixed skew field (see Definition \ref{(D)product}). The concept of the skew product is motivated by the product formula for the evaluation of skew polynomials introduced in \cite[Theorem 2.7]{LamLeory1988}. We show that the skew product gives rise to near-ring structures on sets of functions with values in the skew field. Restricting our attention to the class of ``skew-convex" functions, we arrive at the notion of skew-convex function rings (see Definition \ref{(D)skew convex} and Theorem \ref{(T)Thetwistedring}). In the rest of Section 2, we study invertible elements in skew-convex function rings. A more complete treatment of the algebraic properties of skew-convex function rings will appear elsewhere.  
 
Section 3 deals with skew analysis over quaternions. In this section, the notion of skew regularity is introduced (see Definition \ref{(D)skewderivatvie}) using which we develop an analytical theory for quaternion-valued functions over quaternions. In particular, we define the notion of the \textit{skew derivative} which is very similar to the classical definition of derivatives (see Definition \ref{(D)skewderivatvie2}). It turns out that skew-regular functions are always skew-convex, a fact with far-reaching consequences. As mentioned above, the concept of skew regularity is closely related to the notion of slice regularity. More precisely, we show that every skew-regular function is slice regular (see Proposition \ref{(P)skewimpliesslice}). More importantly, we will prove that a function defined on a ``symmetric slice domain" is skew regular if and only if it is slice regular (see Subsection \ref{(S)Skew regularity and slice regularity} and Theorem \ref{(T)sliceequivalentskew}). It is known that the notion of slice regularity is not well-behaved over domains that are not slice domains.  The strength of our approach lies in the fact that the concept of skew-regularity is well-behaved even over domains which are not slice domains. One of our main results is that over a general symmetric domain, a function is skew regular if and only if it is symmetrically analytic (see Theorem \ref{(T)skewregularequivalentspherical}).  Furthermore, our approach simplifies and clarifies some of the results and features of slice-regular functions.

\end{section} 


\begin{section}{Rings of skew maps}
The theory of skew polynomial rings over skew fields is rich and well-developed. For a detailed account of this theory,  see for example \cite[Section 1.1]{Freeidealrings}.  In \cite{LamLeory1988}, the authors defined the evaluation of skew polynomials using which skew polynomials can naturally be considered as maps on the ground skew field.    
It turns out that the value of the product of two skew polynomials at a given point is not necessarily equal to the product of the values of the skew polynomials at the same point. The correct formula, called the product formula, is given in  \cite[Theorem 2.7]{LamLeory1988}. The product formula can be regarded as a binary operation which we shall call the \textit{skew product}. This section will deal with the skew product and its properties. Subsection 2.1 introduces the notions of the skew product and skew-convex functions, and gives their basic properties. In Section 2.2, we study skew-invertible functions, that is, functions which are invertible with respect to the skew product.    

\begin{subsection}{Rings of skew-convex functions}
	Let $K$ be  a skew field and $X$ be a set on which the multiplicative group $K^*\colonequals K\setminus \{0\}$ acts (on the left). The action of $a\in K^*$ on $x\in X$ is denoted by $ ^{a}x$. We will freely use the standard concepts from Group Theory. In particular, we use the following notions: An \textit{invariant} subset of $X$ is a subset $Y$ of $X$ which satisfies the property that $a\in K^*$ and $y\in Y$ imply that $^{a}y\in Y$; By an \textit{orbit}, we mean the smallest invariant subset containing some element. 
	
	We denote the set of all functions $f\colon  X\to K$ by $\mathcal{F}(X)$. By abuse of notation, a constant function in $ \mathcal{F}(X)$, whose value is $a\in K$,  is simply denoted by $a$.
	Given functions $f,g\colon  X\to K$,  let $f+g$ denote the pointwise addition of the functions $f$ and $g$. 
	\begin{definition}
		The \textit{left skew product} of the functions $f$ and $g$ as follows
		\begin{equation}\label{(D)product}
			(f\diamond g)(a)=\begin{cases}
				f\left(  \, ^{{g(a)}}a\, \right) g(a) & g(a)\neq 0,\\
				0& g(a)=0.
			\end{cases}
		\end{equation}
	\end{definition}
	The \textit{right skew product} is defined as follows
	\begin{equation}\label{(D)rightproduct}
		(f\diamond_r g)(a)=\begin{cases}
			f(a)g\left(  \, ^{{f(a)^{-1}}}a\, \right)  & f(a)\neq 0,\\
			0& f(a)=0.
		\end{cases}
	\end{equation}

	In this paper, we will mostly work with the left skew product.  Therefore, we usually drop the adjective ``left" if there is no risk of confusion.  We leave it to the reader to formulate and prove  similar  results for the right skew product.
	 
	It is easy to see that $(a\diamond f)(x)=af(x)$, for every $a\in K,x\in X$. We shall henceforth denote $a\diamond f$ by $af$. In the following lemma, the proof of which is straightforward, we collect some properties of the skew product. 
	\begin{lemma}\label{(L)propertiesofsproduct}
		Let $f,h,g\colon  X\to K$ be arbitrary functions. Then:\\
		(1) The constant function $1\colon  x\mapsto 1$ is a unit for $\diamond$, that is, $f=f\diamond 1=1\diamond f$.\\
		(2) $(f+g)\diamond h=  f\diamond h+ g\diamond h$, that is, the right distributive law (with respect to pointwise addition) holds for the skew product.  \\
		(3) $ (f\diamond g) \diamond h = f \diamond (g\diamond h)$, that is, $\diamond$ is associative. 
	\end{lemma}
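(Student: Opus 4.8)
The plan is to verify the three claims directly from the definition of the skew product in \eqref{(D)product}, handling the trivial cases where relevant values vanish and otherwise unwinding the group-action notation. Claim (1) is immediate: if $f(x)=0$ then both $(f\diamond 1)(x)$ and $(1\diamond f)(x)$ vanish; otherwise $(f\diamond 1)(x)=f({}^{1(x)}x)\cdot 1(x)=f({}^{1}x)=f(x)$ since $1\in K^{*}$ acts trivially, and $(1\diamond f)(x)=1({}^{f(x)}x)f(x)=1\cdot f(x)=f(x)$. So the constant function $1$ is a two-sided unit.

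For claim (2), I would fix $x\in X$ and split on whether $h(x)=0$. If $h(x)=0$, both sides are $0$. If $h(x)\neq 0$, then by definition $\big((f+g)\diamond h\big)(x)=(f+g)\big({}^{h(x)}x\big)\,h(x)=\big(f({}^{h(x)}x)+g({}^{h(x)}x)\big)h(x)$, and distributing the right multiplication by $h(x)$ in the skew field $K$ gives $f({}^{h(x)}x)h(x)+g({}^{h(x)}x)h(x)=(f\diamond h)(x)+(g\diamond h)(x)$. Note this is exactly where right-distributivity and not left-distributivity appears: the inner argument ${}^{h(x)}x$ depends on $h$ but not on $f$ or $g$, so pulling an additive combination of $f$ and $g$ through the evaluation is clean, whereas doing the same with the \emph{second} factor would be obstructed by the fact that ${}^{(g_1+g_2)(x)}x \neq {}^{g_1(x)}x$ in general.

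Claim (3) is the substantive one and the main obstacle, since it is where the compatibility of the skew product with the group action must be exploited. I would evaluate both sides at $x\in X$. On the right, $(g\diamond h)(x)$ is nonzero exactly when $h(x)\neq 0$ and $g\big({}^{h(x)}x\big)\neq 0$, in which case $(g\diamond h)(x)=g\big({}^{h(x)}x\big)h(x)$, and then $\big(f\diamond(g\diamond h)\big)(x)=f\big({}^{g({}^{h(x)}x)h(x)}x\big)\,g\big({}^{h(x)}x\big)h(x)$. On the left, $\big((f\diamond g)\diamond h\big)(x)=(f\diamond g)\big({}^{h(x)}x\big)h(x)$ when $h(x)\neq 0$, and with $y={}^{h(x)}x$ this is $f\big({}^{g(y)}y\big)g(y)h(x)$ provided $g(y)\neq 0$ — so the vanishing cases match on both sides. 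The key identity is then the action axiom ${}^{g(y)}y = {}^{g(y)}\big({}^{h(x)}x\big) = {}^{g(y)h(x)}x$, which makes the two expressions coincide term by term: the scalar factors $g(y)h(x)$ agree, and the inner arguments $f\big({}^{g(y)h(x)}x\big)$ agree. The only care needed is to confirm that the "$=0$" branches are triggered under exactly the same conditions on the two sides, which the bookkeeping above shows; once the action axiom ${}^{ab}x={}^{a}({}^{b}x)$ is invoked, associativity drops out.
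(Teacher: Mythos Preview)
Your proof is correct and is precisely the routine verification the paper has in mind when it declares the lemma ``straightforward'' and omits the argument entirely. The case analysis you carry out---especially the bookkeeping that the zero branches of the definition are triggered under the same conditions on both sides of (3), together with the use of the action axiom ${}^{ab}x={}^{a}({}^{b}x)$---is exactly what is needed and nothing more.
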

	It follows from this lemma that the set $\mathcal{F}(X)$ equipped with pointwise addition and the skew product is a structure known as right near-ring in the literature. 
	We note  that the skew product may not be left distributive with respect to pointwise addition. However, the left distributive law for $\diamond$ holds for a class of functions  described below. 
	\begin{definition}\label{(D)skew convex}
		A function $f\colon  X\to K$ is called skew convex if 
		$$f\diamond(a+b)=f\diamond a+f\diamond b, \text{ for all } a,b\in K.$$
		The set of all skew-convex functions $f\colon  X\to K$ is denoted by  $\mathcal{S}(X)$. 
	\end{definition}	
	Any constant function belongs to $\mathcal{S}(X)$ since $a\diamond b=ab$ for all $a,b\in K$. More generally, we have the following result. The proof is left to the reader.
	\begin{proposition}\label{(P)constantskewfunction}
		A function $f\colon  X\to K$ which is constant on every orbit in $X$ is skew convex. 
	\end{proposition}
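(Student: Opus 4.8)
The plan is to reduce skew convexity of $f$ to a single pointwise identity, namely that $(f\diamond c)(x)=f(x)\,c$ for every constant $c\in K$ and every $x\in X$, and then to obtain the left distributive law for $f$ as an immediate consequence of distributivity in $K$.

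First I would establish the pointwise identity. Fix $x\in X$ and $c\in K$. If $c=0$, then directly from Definition \ref{(D)product} we have $(f\diamond c)(x)=0=f(x)\cdot 0$. If $c\neq 0$, then $c\in K^*$, so $^{c}x$ is a well-defined element of $X$ lying in the same orbit as $x$; since $f$ is constant on every orbit by hypothesis, $f(\,^{c}x\,)=f(x)$, whence $(f\diamond c)(x)=f(\,^{c}x\,)\,c=f(x)\,c$. The point worth isolating is that both cases land on the uniform formula $f(x)\,c$, so no extra case analysis is needed below when one of the relevant scalars happens to vanish.

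Then I would conclude. Let $a,b\in K$ be arbitrary and $x\in X$. Applying the pointwise identity to the constants $a+b$, $a$, and $b$, and using the (left) distributive law in the skew field $K$,
$$(f\diamond(a+b))(x)=f(x)(a+b)=f(x)a+f(x)b=(f\diamond a)(x)+(f\diamond b)(x).$$
Since $x$ is arbitrary, $f\diamond(a+b)=f\diamond a+f\diamond b$, so $f\in\mathcal{S}(X)$.

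There is no substantial obstacle here: the entire content is the observation that $^{c}x$ remains in the orbit of $x$, so that the ``constant on orbits'' hypothesis applies, together with the check that the degenerate case $c=0$ agrees with the formula $f(x)\,c$. Everything else is distributivity of multiplication in $K$.
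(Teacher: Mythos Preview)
Your proof is correct. The paper leaves this proof to the reader, and what you have written is precisely the natural argument: once one observes that $(f\diamond c)(x)=f(x)\,c$ for every constant $c$ (using that ${}^{c}x$ lies in the orbit of $x$), skew convexity is immediate from distributivity in $K$.
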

	The following lemma justifies the introduction of $\mathcal{S}(X)$.
	\begin{lemma}\label{(L)leftdist}
		Let $h\colon  X\to K$ be given. The condition 	$$ h\diamond(f+g) = h\diamond f + h\diamond g,$$
		holds for all functions $f,g\colon  X\to K$ if and only if $h$ is skew convex.
	\end{lemma}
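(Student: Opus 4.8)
The plan is to prove the two implications separately: the forward one is immediate by specialization, and the reverse one reduces, pointwise, to the defining identity of skew convexity, exploiting the fact that the skew product is ``local'' in its second argument.

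For the ``only if'' direction, assume the identity $h\diamond(f+g)=h\diamond f+h\diamond g$ holds for all $f,g\colon X\to K$. Specializing to the constant functions $f\equiv a$ and $g\equiv b$ with $a,b\in K$, the sum $f+g$ is the constant function $a+b$, so the hypothesis reads $h\diamond(a+b)=h\diamond a+h\diamond b$, which is precisely the condition in Definition \ref{(D)skew convex}. Hence $h$ is skew convex.

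For the ``if'' direction, the key observation is that, by formula \eqref{(D)product}, the value $(h\diamond f)(x)$ depends on $f$ only through the single element $f(x)\in K$. Fix $x\in X$ and set $p=f(x)$ and $q=g(x)$. Viewing $p$, $q$, $p+q$ also as constant functions, one checks directly from \eqref{(D)product} that $(h\diamond(f+g))(x)=(h\diamond(p+q))(x)$, $(h\diamond f)(x)=(h\diamond p)(x)$, and $(h\diamond g)(x)=(h\diamond q)(x)$. Skew convexity of $h$, evaluated at $x$, gives $(h\diamond(p+q))(x)=(h\diamond p)(x)+(h\diamond q)(x)$, and combining the three identities yields $(h\diamond(f+g))(x)=(h\diamond f)(x)+(h\diamond g)(x)$. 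Since $x\in X$ was arbitrary, the functional identity follows.

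There is essentially no serious obstacle here; the only point requiring care is the bookkeeping forced by the piecewise definition of $\diamond$. One should note that the case split according to whether $p+q$, $p$, or $q$ vanishes is already absorbed into the constant-function identity supplied by Definition \ref{(D)skew convex}, so no separate case analysis beyond that is needed.
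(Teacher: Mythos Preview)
Your proof is correct and follows essentially the same approach as the paper: both arguments hinge on the identity $(h\diamond f)(x)=(h\diamond f(x))(x)$, which you phrase as the skew product being ``local'' in its second argument. The paper simply states this identity and leaves the rest to the reader, whereas you spell out both implications explicitly.
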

	\begin{proof}
	The result follows from the identity 
	$$(h\diamond f)(x)=(h\diamond {f(x)})(x), \text{ for all } h,f\in \mathcal{F}(X) \text{ and } x\in X.$$
	\end{proof}	
	As a consequence of this lemma, we have the following result.  
	\begin{theorem}\label{(T)Thetwistedring}
	Equipped with the left skew product, the additive group $\mathcal{S}(X)$  is a ring with identity. 
	\end{theorem}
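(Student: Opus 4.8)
The plan is to verify the ring axioms for $\mathcal{S}(X)$ equipped with pointwise addition and the left skew product $\diamond$, drawing on the structural facts already established. First I would observe that $(\mathcal{F}(X),+,\diamond)$ is a right near-ring by Lemma~\ref{(L)propertiesofsproduct}: associativity of $\diamond$, the right distributive law, and the two-sided identity $1$ are all in place there. So the only genuinely new content is (a) that $\mathcal{S}(X)$ is closed under both operations, and (b) that the \emph{left} distributive law holds inside $\mathcal{S}(X)$ — and (b) is essentially immediate, since by Lemma~\ref{(L)leftdist} every $h\in\mathcal{S}(X)$ already satisfies $h\diamond(f+g)=h\diamond f+h\diamond g$ for \emph{all} $f,g\in\mathcal{F}(X)$, a fortiori for $f,g\in\mathcal{S}(X)$.

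Next I would check that $\mathcal{S}(X)$ is an additive subgroup of $\mathcal{F}(X)$. The zero function and, by Proposition~\ref{(P)constantskewfunction} (or directly from $a\diamond b = ab$), every constant function lie in $\mathcal{S}(X)$; in particular $1\in\mathcal{S}(X)$, so the ring will have an identity. For closure under addition and negation: if $f,g$ are skew convex, then for $a,b\in K$ one computes $(f+g)\diamond(a+b)$ using the \emph{right} distributive law of Lemma~\ref{(L)propertiesofsproduct}(2) to split off $(f+g)$, namely $(f+g)\diamond c = f\diamond c + g\diamond c$ for any constant $c$, and then applies skew convexity of $f$ and of $g$ separately to $c=a+b$; reassembling gives $(f+g)\diamond(a+b) = (f+g)\diamond a + (f+g)\diamond b$. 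The same manipulation with $g$ replaced by $-g$ handles additive inverses, so $\mathcal{S}(X)$ is an additive subgroup.

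The last point is closure of $\mathcal{S}(X)$ under $\diamond$: if $h,k\in\mathcal{S}(X)$, I must show $h\diamond k\in\mathcal{S}(X)$, i.e. $(h\diamond k)\diamond(a+b) = (h\diamond k)\diamond a + (h\diamond k)\diamond b$ for all $a,b\in K$. Here associativity of $\diamond$ (Lemma~\ref{(L)propertiesofsproduct}(3)) rewrites the left side as $h\diamond\bigl(k\diamond(a+b)\bigr)$, then skew convexity of $k$ turns $k\diamond(a+b)$ into $k\diamond a + k\diamond b$, then skew convexity of $h$ — in the strong form of Lemma~\ref{(L)leftdist} — distributes $h$ over that sum to give $h\diamond(k\diamond a) + h\diamond(k\diamond b)$, and associativity again yields $(h\diamond k)\diamond a + (h\diamond k)\diamond b$. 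This chain is the crux of the argument, and the step I expect to lean on most heavily is precisely the upgrade provided by Lemma~\ref{(L)leftdist}: skew convexity of $h$ must be invoked not just against constants but against the possibly non-constant functions $k\diamond a$, which is exactly what that lemma licenses. Once closure is in hand, all remaining ring axioms are inherited from the near-ring structure on $\mathcal{F}(X)$, completing the proof.
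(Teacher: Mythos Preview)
Your proof is correct and follows essentially the same approach as the paper. The paper is simply more terse: it observes (via Lemma~\ref{(L)leftdist}) that $\mathcal{S}(X)$ coincides with the set of left-distributive elements of the right near-ring $\mathcal{F}(X)$, and then invokes the general fact that in any right near-ring this set is a ring; you have unpacked that general fact explicitly by verifying closure under $+$ and $\diamond$ by hand, which amounts to the same argument.
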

	\begin{proof}
		The result follows from Lemma \ref{(L)leftdist} and the  general fact that in any right near-ring $R$, the set $$\{r\in R\,|\, r(s_1+s_2)=rs_1+rs_2\text{ for all } s_1,s_2\in R\},$$
		is a ring. 
	\end{proof}
	We call $\mathcal{S}(X)$, equipped with pointwise addition and the skew product,  \textit{the ring of skew-convex functions} on $X$ determined by the action of $K^*$ on $X$. To introduce a subring of $\mathcal{S}(X)$, let us denote the set of all functions $f\colon  X\to K$ which are constants on every orbit, by $\mathcal{C}(X)$. By Proposition \ref{(P)constantskewfunction}, we have $\mathcal{C}(X)\subset \mathcal{S}(X)$. Moreover, we have the following proposition whose proof is left to the reader. 
	\begin{proposition}\label{(P)Constantskewconvex}
		The set $\mathcal{C}(X)$ is a subring of $\mathcal{S}(X)$. Moreover, it is isomorphic to the ordinary ring of functions $f\colon  X/K^*\to K$ (equipped with pointwise addition and pointwise multiplication). Here, $X/K^*$ denotes the set of equivalence classes of the action. 
	\end{proposition}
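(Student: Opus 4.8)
The plan is to reduce everything to one observation: on $\mathcal{C}(X)$ the skew product coincides with pointwise multiplication. First I would take $f,g\in\mathcal{C}(X)$ and $a\in X$. If $g(a)\neq 0$, then $^{g(a)}a$ is obtained from $a$ by the action of an element of $K^*$, so it lies in the same orbit as $a$; since $f$ is constant on that orbit, $f(\,^{g(a)}a\,)=f(a)$, whence $(f\diamond g)(a)=f(a)g(a)$. If $g(a)=0$, then $(f\diamond g)(a)=0=f(a)g(a)$. Thus $f\diamond g$ equals the pointwise product $fg$ everywhere; in particular $f\diamond g$ is again constant on every orbit, so $\mathcal{C}(X)$ is closed under $\diamond$.

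Next I would check the ``subring'' claim. We already have $\mathcal{C}(X)\subseteq\mathcal{S}(X)$ by Proposition \ref{(P)constantskewfunction}. Clearly $\mathcal{C}(X)$ is an additive subgroup of $\mathcal{F}(X)$, since a sum or difference of functions that are constant on each orbit is again constant on each orbit. Closure under $\diamond$ was just established, and the identity of $\mathcal{S}(X)$, namely the constant function $1$, is constant on every orbit and hence lies in $\mathcal{C}(X)$. Therefore $\mathcal{C}(X)$ is a subring of $\mathcal{S}(X)$ with the same identity.

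For the isomorphism, let $\pi\colon X\to X/K^*$ be the quotient map of the action. A function $f\colon X\to K$ is constant on every orbit precisely when it factors as $f=\bar f\circ\pi$ for a unique $\bar f\colon X/K^*\to K$, so $f\mapsto\bar f$ is a bijection from $\mathcal{C}(X)$ onto the ordinary function ring $\mathcal{F}(X/K^*)$. It is additive because addition is pointwise on both sides; it is multiplicative because, by the first paragraph, the product in $\mathcal{C}(X)$ is the pointwise product $fg$, which transports under $f\mapsto\bar f$ to the pointwise product $\bar f\bar g$; and it sends $1$ to $1$. Hence $f\mapsto\bar f$ is a ring isomorphism, which completes the proof.

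I do not expect a genuine obstacle here: the only step that requires any thought is the first one, namely noticing that $^{g(a)}a$ and $a$ always lie in the same orbit, which is exactly what lets one replace the twisted evaluation $f(\,^{g(a)}a\,)$ by $f(a)$ and thereby identify $\diamond$ with pointwise multiplication on $\mathcal{C}(X)$; the remaining verifications are routine bookkeeping with pointwise operations and the universal property of the quotient set $X/K^*$.
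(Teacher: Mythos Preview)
Your argument is correct; the key observation that $f\diamond g$ coincides with the pointwise product $fg$ on $\mathcal{C}(X)$ is exactly what makes everything work, and the remaining verifications are routine as you say. The paper itself leaves this proof to the reader, so there is no alternative approach to compare against; your write-up is precisely the kind of argument the authors presumably have in mind.
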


	Let us now give some examples. 
	\begin{example}
		If the action of $K^*$ on $X$ is trivial, the ring $\mathcal{S}(X)$ is just the ring of all functions $f\colon  X\to K$ equipped with pointwise addition and pointwise multiplication. 
	\end{example}
	\begin{example}\label{(Ex)(s,delta)-skewmaps}
		Let $\s\colon  K\to K$ be an endomorphism and $\delta\colon  K\to K$ be a $\s$-derivation. Let $K[T;\s,\delta]$ denote the ring of skew polynomials determined by $\s$  and $\delta$. Every elements of $K[T;\s,\delta]$ can uniquely be written as $\sum_{m=0}^na_mT^m$ where $a_m\in K$ with $a_n\neq 0$. Recall that the identity $Ta=\s(a)T+\delta(a)$, where $a\in K$, holds in $K[T;\s,\delta]$. One can define  the $(\s,\delta)$-action of $K^*$ on $K$ as follows
		\begin{equation}\label{(E)conjugate}
			^{b}a=\s(b)ab^{-1}+\delta(b)b^{-1}.
		\end{equation} 
		The ring of skew-convex functions determined by this action is denoted by $K[\s,\delta]$. One can verify that there exists a unique ring homomorphism $$K[T;\s,\delta]\to K[\s,\delta],$$ which sends each $a\in K$ to the constant function $a$, and $T$ to the identity function $1_K\colon  K\to K$. In particular,  any skew polynomial $P(T)\in K[T;\s,\delta]$ can, under this homomorphism, be considered as a skew-convex function $P\colon  K\to K$. The reader can  verify that the value $P(a)$ of $P$  at $a\in K$ coincides with the evaluation map introduced in \cite{LamLeory1988}, that is, $P(a)$ is the unique element of $K$ for which we have
		$$P(T)-P(a)\in K[T;\s,\delta](T-a).$$   
		In the particular case where $\delta$ is zero and $\s$ is the identity homomorphism, the value $P(a)$ is computed as follows
		$$P(a)=\sum_{m=0}^na_ma^m, \text{ where } P(T)=\sum_{m=0}^na_mT^m\in K[T]\colonequals K[T;1_K,0].$$
	\end{example}
	We conclude this part with a discussion regarding pullbacks. 
	Let  $Y$ be a second set on which $K^*$ acts (on the left). For a map $\phi\colon  X\to Y$, let  $\phi^*\colon  \mathcal{F}(Y)\to \mathcal{F}(X)$ denote the pullback map defined by $\phi^*(f)=f\circ \phi$.  Recall that a map $\phi\colon  X\to Y$ is called action-preserving if  $\phi(^{a}x)={}^{a}\phi(x)$ for all $a\in K^*$ and $x\in X$.
	\begin{proposition}\label{(P)actionpreserving}
		Suppose that $\phi\colon  X\to Y$ is action-preserving. Then, for any function $f\in \mathcal{S}(Y)$, we have $\phi^*(f)\in \mathcal{S}(X)$. Moreover, the map 
		$\phi^*\colon  \mathcal{S}(Y)\to \mathcal{S}(X)$ is a homomorphism of rings. 
	\end{proposition}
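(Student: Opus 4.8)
The plan is to reduce everything to a single pointwise identity relating the skew product to the pullback, and then let the additivity of $\phi^*$ do the rest. First I would establish that for every $f\in\mathcal{F}(Y)$ and every constant $a\in K$ one has $\phi^*(f)\diamond a=\phi^*(f\diamond a)$. To see this, evaluate both sides at $x\in X$: when $a\neq 0$, the left-hand side is $\phi^*(f)({}^{a}x)\,a=f(\phi({}^{a}x))\,a$, while the right-hand side is $(f\diamond a)(\phi(x))=f({}^{a}\phi(x))\,a$; these agree precisely because $\phi$ is action-preserving, i.e.\ $\phi({}^{a}x)={}^{a}\phi(x)$. When $a=0$ both sides vanish. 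This is essentially the only place the hypothesis on $\phi$ enters.

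Granting this identity, skew-convexity of $\phi^*(f)$ for $f\in\mathcal{S}(Y)$ is immediate: for $a,b\in K$,
$$\phi^*(f)\diamond(a+b)=\phi^*\big(f\diamond(a+b)\big)=\phi^*(f\diamond a+f\diamond b)=\phi^*(f\diamond a)+\phi^*(f\diamond b)=\phi^*(f)\diamond a+\phi^*(f)\diamond b,$$
where the second equality uses $f\in\mathcal{S}(Y)$ and the third uses that $\phi^*$ is additive with respect to pointwise addition (clear from $\phi^*(f)=f\circ\phi$). Hence $\phi^*$ maps $\mathcal{S}(Y)$ into $\mathcal{S}(X)$.

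It then remains to check that $\phi^*\colon\mathcal{S}(Y)\to\mathcal{S}(X)$ is a ring homomorphism. Additivity is again clear, and $\phi^*$ sends the constant function $1$ on $Y$ to the constant function $1$ on $X$, so it preserves the identity. For multiplicativity I would show $\phi^*(f\diamond g)=\phi^*(f)\diamond\phi^*(g)$ for all $f,g\in\mathcal{F}(Y)$ by evaluating at $x\in X$ and splitting on whether $g(\phi(x))=0$. If $g(\phi(x))\neq 0$, then $\phi^*(g)(x)=g(\phi(x))\neq 0$ and both sides equal $f\big({}^{g(\phi(x))}\phi(x)\big)\,g(\phi(x))$, using $\phi({}^{g(\phi(x))}x)={}^{g(\phi(x))}\phi(x)$ once more; if $g(\phi(x))=0$, both sides are $0$.

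There is no genuine obstacle here; the one point demanding care is to invoke the action-preserving property at exactly the spot where the ``inner'' argument ${}^{g(a)}a$ of the skew product appears, and to treat the degenerate case $g(a)=0$ separately. Alternatively, one can deduce multiplicativity from the constant-coefficient identity above together with the identity $(h\diamond f)(x)=(h\diamond f(x))(x)$ recorded in the proof of Lemma \ref{(L)leftdist}, which avoids repeating the case analysis.
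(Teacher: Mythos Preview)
Your proof is correct and follows essentially the same route as the paper: the paper's proof also hinges on the identity $(f\circ\phi)\diamond a=(f\diamond a)\circ\phi$ (i.e.\ $\phi^*(f)\diamond a=\phi^*(f\diamond a)$) and then declares the rest ``straightforward.'' You have simply filled in those straightforward details explicitly.
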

\begin{proof}
	The fact that $\phi^*(f)\in \mathcal{S}(X)$ for $f\in \mathcal{S}(Y)$, follows from the identity $$(f\circ \phi)\diamond a=(f\diamond a)\circ \phi, \text{ where } a\in K.$$ 
	The rest of the proof is straightforward. 
\end{proof}
	In particular, if $Z\subset X$ is an invariant subset of $X$, then the restriction map $f\in \mathcal{S}(X)\mapsto f |_Z\in \mathcal{S}(Z)$ is a well-defined homomorphism of rings.  
	
\end{subsection}
\begin{subsection}{Skew-invertible  functions}

	As before, let $K$ be a skew field and $X$ be a set on which $K^*$ acts. A function $f\in \mathcal{F}(X)$ is called \textit{skew invertible} if it is invertible with respect to the skew product, in which case, the inverse of $f$ is called its \textit{skew inverse} and denoted by $f^{\langle-1\rangle}$. The notion of skew inverse with respect to the right skew product is defined in the obvious way. The skew inverse of a function $f$ with respect to the right skew  product will be denoted by $f^{\langle-1\rangle_r}$.
	\begin{lemma}\label{(L)Inverseofc-invariant}
		 If $f\in \mathcal{S}(X)$ is skew invertible, then the skew inverse of $f$ belongs to $\mathcal{S}(X)$.   
	\end{lemma}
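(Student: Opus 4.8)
The plan is to argue abstractly inside the right near-ring $\mathcal{F}(X)$, using only associativity of $\diamond$ (Lemma \ref{(L)propertiesofsproduct}(3)), the characterization of skew convexity via left distributivity (Lemma \ref{(L)leftdist}), and the fact that an invertible element of a monoid has a two-sided inverse. Write $g\colonequals f^{\langle-1\rangle}\in\mathcal{F}(X)$, so that $f\diamond g=g\diamond f=1$. The goal is to show $g\diamond(a+b)=g\diamond a+g\diamond b$ for all $a,b\in K$, which by Definition \ref{(D)skew convex} is exactly the assertion $g\in\mathcal{S}(X)$.

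First I would compute $f\diamond\bigl(g\diamond(a+b)\bigr)$ in two ways. On one hand, by associativity this equals $(f\diamond g)\diamond(a+b)=1\diamond(a+b)=a+b$. On the other hand, since $f\in\mathcal{S}(X)$, Lemma \ref{(L)leftdist} applied to the two functions $g\diamond a$ and $g\diamond b$ gives
$$f\diamond\bigl(g\diamond a+g\diamond b\bigr)=f\diamond(g\diamond a)+f\diamond(g\diamond b)=(f\diamond g)\diamond a+(f\diamond g)\diamond b=a+b.$$
Hence $f\diamond\bigl(g\diamond(a+b)\bigr)=f\diamond\bigl(g\diamond a+g\diamond b\bigr)$.

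Next I would apply $g\diamond(-)$ on the left of this identity and use associativity together with $g\diamond f=1$: the left-hand side becomes $(g\diamond f)\diamond\bigl(g\diamond(a+b)\bigr)=g\diamond(a+b)$, and the right-hand side becomes $(g\diamond f)\diamond\bigl(g\diamond a+g\diamond b\bigr)=g\diamond a+g\diamond b$. Therefore $g\diamond(a+b)=g\diamond a+g\diamond b$, which is what we wanted; and combined with Lemma \ref{(L)Inverseofc-invariant}'s statement, $f^{\langle-1\rangle}=g$ lies in $\mathcal{S}(X)$.

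I do not expect a genuine obstacle here: the argument is really an instance of the general ring-theoretic fact that a distributive unit of a right near-ring has a distributive inverse, and it could be phrased that way if preferred. The only point deserving a word of care is that "skew invertible" must be read as invertibility in the monoid $(\mathcal{F}(X),\diamond,1)$, so that both one-sided relations $f\diamond g=1$ and $g\diamond f=1$ are available; this is automatic since a one-sided inverse in a monoid that also has a one-sided inverse on the other side coincides and is two-sided.
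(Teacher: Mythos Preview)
Your proof is correct and takes essentially the same approach as the paper's. The paper invokes precisely the general near-ring fact you mention in your closing paragraph (that an invertible element of the distributive subring $R'$ of a right near-ring $R$ has its inverse in $R'$), leaving its verification to the reader; you have simply spelled out that verification via the left-cancellation argument $f\diamond X=f\diamond Y\Rightarrow X=Y$.
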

	\begin{proof}
		By Lemma \ref{(L)leftdist}, we have 
		$$\mathcal{S}(X)=\{f\in \mathcal{F}(X)\,|\, f\diamond(g+h)=f\diamond g+f\diamond h\text{ for all } g,h\in \mathcal{F}(X)\}.$$
		The result follows from the following general fact whose proof is left to the reader: Let $R$ be a right near-ring and consider the ring 
		$$R'=\{r\in R\,|\, r(s_1+s_2)=rs_1+rs_2\text{ for all } s_1,s_2\in R\}.$$
		If $r\in R'$ is invertible in $R$, then its inverse belongs to $R'$. 
	\end{proof}
	   Next, we give a characterization of skew-invertible functions.
	\begin{lemma}\label{(L)Invertibility} 
		(1)  Let $f\in \mathcal{F}(X)$. There exists $g\in \mathcal{F}(X)$ such that $f\diamond g=1$ if and only if for every $x\in X$, there exists some $a\in K^*$ such that $f({}^ax)=a^{-1}$.\\
		(2) Let $g\in \mathcal{F}(X)$.  There exists $f\in \mathcal{F}(X)$ such that $f\diamond g=1$ if and only if $g(X)\subset K^*$, and the map $x\mapsto {}^{g(x)}x$ is 1-1. \\
		(3) A function $f\in \mathcal{F}(X)$ is skew invertible if and only if $f(X)\subset K^*$ and the assignment $x\mapsto {}^{f(x)}x$ establishes a bijection from $X$ onto $X$. 
	\end{lemma}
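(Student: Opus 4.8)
The plan rests on one \emph{master identity}: unwinding Definition~\ref{(D)product}, the equation $f\diamond g=1$ holds if and only if $g(X)\subset K^*$ and $f\bigl({}^{g(x)}x\bigr)=g(x)^{-1}$ for every $x\in X$. Indeed, if $g(x)=0$ for some $x$ then $(f\diamond g)(x)=0\neq 1$, while otherwise $(f\diamond g)(x)=f\bigl({}^{g(x)}x\bigr)g(x)$, and this equals $1$ precisely when $f\bigl({}^{g(x)}x\bigr)=g(x)^{-1}$. Parts (1) and (2) are then the two natural ways of reading this condition — as a condition on $f$ and as a condition on $g$ — and part (3) combines them, so I would establish the master identity first and then treat the three parts in order.

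For part (1), the ``only if'' direction is immediate from the master identity by taking $a=g(x)$. For ``if'', I would use the hypothesis to choose, for each $x\in X$, an element $a_x\in K^*$ with $f\bigl({}^{a_x}x\bigr)=a_x^{-1}$, set $g(x):=a_x$, and read off $f\diamond g=1$ from the master identity. For part (2), the ``only if'' direction again uses the master identity: $g(X)\subset K^*$ is part of it, and if ${}^{g(x_1)}x_1={}^{g(x_2)}x_2=:y$ then $f(y)g(x_1)=1=f(y)g(x_2)$ forces $g(x_1)=g(x_2)$, whence $x_1=x_2$ because $K^*$ acts by bijections. For ``if'', given that $\psi\colon x\mapsto{}^{g(x)}x$ is injective, I would define $f$ on the image of $\psi$ by $f(\psi(x)):=g(x)^{-1}$ (unambiguous precisely by injectivity of $\psi$) and arbitrarily off the image; the master identity gives $f\diamond g=1$.

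For part (3), recall that $\mathcal{F}(X)$ is a monoid under $\diamond$ by Lemma~\ref{(L)propertiesofsproduct}, so $f$ is skew invertible iff it admits a two-sided inverse $g$, i.e.\ $f\diamond g=g\diamond f=1$. Applying part (2) to the equation $g\diamond f=1$ (with the roles of $f$ and $g$ in the statement of part (2) interchanged) already yields $f(X)\subset K^*$ and injectivity of $\phi\colon x\mapsto{}^{f(x)}x$. It remains to get surjectivity of $\phi$, and this is the one step that genuinely uses the \emph{other} equation $f\diamond g=1$: from the master identity it gives $f\bigl({}^{g(y)}y\bigr)=g(y)^{-1}$, so for an arbitrary $y\in X$, putting $x:={}^{g(y)}y$ we get $\phi(x)={}^{f(x)}x={}^{g(y)^{-1}}\bigl({}^{g(y)}y\bigr)=y$. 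Conversely, if $f(X)\subset K^*$ and $\phi$ is a bijection, I would set $g(y):=f\bigl(\phi^{-1}(y)\bigr)^{-1}$ and verify directly, via the master identity (with $f,g$ interchanged for $g\diamond f=1$), that $f\diamond g=g\diamond f=1$.

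The argument is entirely elementary; the only points requiring care are never to invert $0$ and to apply the group-action laws ${}^{a}({}^{b}x)={}^{ab}x$ and ${}^{1}x=x$ correctly. The one genuinely non-formal observation — and hence the ``hardest'' point — is that in part (3) surjectivity of $\phi$ needs the equation $f\diamond g=1$ whereas injectivity needs $g\diamond f=1$, so both halves of two-sided invertibility are really used; for a one-sided inverse neither of parts (1), (2) alone delivers a bijection.
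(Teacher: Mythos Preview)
Your proof is correct and follows essentially the same route as the paper's: the ``master identity'' you isolate is exactly what the paper uses (implicitly) throughout, and your arguments for all three parts mirror the paper's, including the key observation in (3) that injectivity of $\phi$ comes from $g\diamond f=1$ while surjectivity comes from $f\diamond g=1$. The only cosmetic difference is in the converse of (3), where the paper first invokes part (2) to obtain some $g$ with $g\diamond f=1$ and then checks $f\diamond g=1$, whereas you write down $g(y)=f(\phi^{-1}(y))^{-1}$ explicitly and verify both equations at once; the underlying computation is the same.
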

	\begin{proof}
		(1)  The proof is straightforward.\\
		(2)   First, we prove the ``if" direction. It follows from $f\diamond g=1$ that $f({}^{g(x)}x)g(x)=1$ for all $x\in X$. Clearly, $g$ must be nonzero on $X$, i.e., $g(X)\subset K^*$.  If ${}^{g(x)}x={}^{g(y)}y=z\in K$ for some $x,y\in X$, then we have $f(z)g(x)=f(z)g(y)=1$, implying that $g(x)=g(y)$. One can easily verify that ${}^{g(x)}x={}^{g(y)}y$ and $g(x)=g(y)$ imply that $x=y$. This proves the ``if" direction. Conversely, let $g$ satisfy the stated properties. We define a map $f\colon  X\to K$ as follows: If $y={}^{g(x)}x$ for some $x\in X$, we set $f(y)=g(x)^{-1}$. Otherwise, we set $f(y)=x_0$ where $x_0\in X$ is a fixed element. It is easy to see that $f$ is well-defined and  $f\diamond g=1$. \\
		(3) Suppose that $f$ is skew invertible and let $g$ be its skew inverse. By $(2)$,  $f$ is nonzero on $X$ and $x\mapsto {}^{f(x)}x$ is 1-1. The fact that $x\mapsto {}^{f(x)}x$ is onto follows from the following identity
		$${}^{f(^{g(y)}y)}\left( {}^{g(y)}y\right)=y.$$ 
		Conversely, suppose that $f$ satisfies the stated properties. By Part $(2)$, there exists $g\in \mathcal{F}(X)$ such that $g\diamond f=1$. We need only  show that $f\diamond g=1$. Given an arbitrary element $x\in X$, we can choose $y\in X$ such that $x={}^{f(y)}y$. We have
		$$(g\diamond f)(y)=1\implies g({}^{f(y)}y)f(y)=1\implies g(x)f(y)=1.$$
		Therefore, we have ${}^{g(x)}x={}^{g(x)f(y)}y=y$ from which it follows that 
		$$g(x)f(y)=1\implies f(y)g(x)=1\implies f({}^{g(x)}x)g(x)=1 \implies (f\diamond g)(x)=1.$$
		Since $x\in X$ was arbitrary, we conclude that $f\diamond g=1$.
	\end{proof}
	A skew-invertible function shares some properties with its skew inverse. In the following proposition, we give two such properties.
	\begin{proposition}\label{(P)injectivsurjective}
		Let $f\colon  X\to K^*$ be a skew-invertible function. Then:\\
		(1) If $f$ is 1-1, so is $f^{\langle-1\rangle}$.\\
		(2) If $f$ is onto, so is $f^{\langle-1\rangle}$.
	\end{proposition}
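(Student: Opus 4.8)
The plan is to write the skew inverse $g\colonequals f^{\langle-1\rangle}$ as a composition of maps each of which is a bijection (up to one harmless inversion on $K^*$), and then read off both statements at once. Set $\phi\colon X\to X$, $\phi(x)={}^{f(x)}x$, and let $\iota\colon K^*\to K^*$ denote inversion $a\mapsto a^{-1}$. Since $f$ is skew invertible, Lemma \ref{(L)Invertibility}(3) gives $f(X)\subseteq K^*$ and that $\phi$ is a bijection of $X$ onto itself; in particular $\iota\circ f$ and $\phi^{-1}$ are defined.

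The first step is to extract from $g\diamond f=1$ the pointwise identity
\[
g\bigl(\phi(x)\bigr)=f(x)^{-1}\qquad\text{for all }x\in X,
\]
which holds because $f$ is nowhere zero, so $(g\diamond f)(x)=g({}^{f(x)}x)\,f(x)$ equals $1$ for every $x$. Since $\phi$ is a bijection, this identity is precisely the factorization
\[
f^{\langle-1\rangle}=\iota\circ f\circ\phi^{-1}.
\]
(As a byproduct, $g$ maps into $K^*$, so that ``onto'' in part (2) unambiguously means onto $K^*$; alternatively this follows by applying Lemma \ref{(L)Invertibility}(3) to the skew-invertible function $g$, whose skew inverse is $f$.) Now both parts follow immediately: $\iota$ is a bijection of $K^*$ and $\phi^{-1}$ is a bijection of $X$, and pre- or post-composing a map with a bijection changes neither its injectivity nor its surjectivity; hence $g$ is injective exactly when $f$ is, and $g$ is onto $K^*$ exactly when $f$ is. This proves (1) and (2) (in fact with ``if'' strengthened to ``iff'').

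I do not anticipate a real obstacle. The only points that need care are justifying that $\phi$ is a bijection, which is exactly the content of Lemma \ref{(L)Invertibility}(3), and keeping straight the symmetric roles of $f$ and $g=f^{\langle-1\rangle}$ so that this characterization can be invoked for whichever of the two is needed. If one prefers to avoid the explicit factorization, the two parts can be argued directly from the displayed identity: for (1), given $g(x_1)=g(x_2)$, write $x_i=\phi(y_i)$ using surjectivity of $\phi$, deduce $f(y_1)=f(y_2)$, then $y_1=y_2$ by injectivity of $f$, and finally $x_1=x_2$; for (2), given $c\in K^*$, choose $y\in X$ with $f(y)=c^{-1}$ and observe that $g(\phi(y))=c$.
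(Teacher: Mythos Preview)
Your proof is correct and follows essentially the same approach as the paper: the paper defines the bijection $\alpha(x)={}^{f(x)}x$ (your $\phi$), lets $\beta$ denote its inverse, and records the identity $f^{\langle-1\rangle}(x)=f(\beta(x))^{-1}$, which is exactly your factorization $f^{\langle-1\rangle}=\iota\circ f\circ\phi^{-1}$; the conclusions are then read off in the same way.
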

	\begin{proof}
		By Part (3) of Lemma \ref{(L)Invertibility}, the function $\alpha\colon  X\to X$, defined by $\alpha(x)={}^{f(x)}x$, is a bijection. Let $\beta\colon  X\to X$ denote its inverse (with respect to function composition). It is easy to check that 
		$$f^{\langle-1\rangle}(x)=f(\beta(x))^{-1}, \text{ for all } x\in X,$$
		from which the results follow. 
	\end{proof}
	Regarding skew-invertible elements of $\mathcal{S}(X)$, we have the following characterization. 
\begin{proposition}\label{(P)inverserightinverse}
	A skew-convex function $f:X\to K$ satisfying $f(X)\subset K^*$ is skew invertible if and only if for any $x\in X$, there exists some $a\in K^*$ such that $f({}^ax)=a^{-1}$.  
\end{proposition}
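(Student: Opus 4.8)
Here is how I would prove Proposition~\ref{(P)inverserightinverse}.

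The ``only if'' direction is immediate from Lemma~\ref{(L)Invertibility}: if $f$ is skew invertible then in particular $f\diamond f^{\langle-1\rangle}=1$, and part~(1) of that lemma then provides, for each $x\in X$, an element $a\in K^*$ with $f({}^{a}x)=a^{-1}$. So all the work lies in the ``if'' direction. Assume $f\in\mathcal{S}(X)$, $f(X)\subset K^*$, and that for every $x\in X$ there is some $a\in K^*$ with $f({}^{a}x)=a^{-1}$. The plan is to verify the two conditions of Lemma~\ref{(L)Invertibility}(3): that $f(X)\subset K^*$ (which is assumed) and that the map $\alpha\colon X\to X$ given by $\alpha(x)={}^{f(x)}x$ is a bijection; note $\alpha$ is well defined precisely because $f(X)\subset K^*$.

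Surjectivity of $\alpha$ is the easy half and uses only the hypothesis, not skew convexity: given $y\in X$, choose $a\in K^*$ with $f({}^{a}y)=a^{-1}$ and set $x={}^{a}y$; then $f(x)=a^{-1}$, so $\alpha(x)={}^{a^{-1}}({}^{a}y)={}^{a^{-1}a}y={}^{1}y=y$.

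Injectivity is the heart of the matter, and this is where skew convexity enters. Fix $z\in X$ and define $\psi_z\colon K\to K$ by $\psi_z(0)=0$ and $\psi_z(\lambda)=f({}^{\lambda}z)\lambda$ for $\lambda\neq 0$; unravelling \eqref{(D)product} gives $\psi_z(\lambda)=(f\diamond\lambda)(z)$ for every $\lambda\in K$, where on the right $\lambda$ also denotes the constant function with that value. Evaluating the identity of Definition~\ref{(D)skew convex} at $z$ then says exactly that $\psi_z$ is additive: $\psi_z(\lambda+\mu)=\psi_z(\lambda)+\psi_z(\mu)$ for all $\lambda,\mu\in K$. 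Moreover $\psi_z$ has trivial kernel, since $\psi_z(\lambda)=0$ with $\lambda\neq 0$ would force $f({}^{\lambda}z)=0$ ($K$ being a skew field), contradicting $f(X)\subset K^*$; an additive map with trivial kernel is injective, so $\psi_z$ is injective. Now suppose $\alpha(x)=\alpha(y)=z$ and put $a=f(x)$, $b=f(y)$; from ${}^{a}x=z$ and ${}^{b}y=z$ we get $x={}^{a^{-1}}z$ and $y={}^{b^{-1}}z$, whence $\psi_z(a^{-1})=f({}^{a^{-1}}z)a^{-1}=a\,a^{-1}=1=b\,b^{-1}=\psi_z(b^{-1})$. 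Injectivity of $\psi_z$ gives $a^{-1}=b^{-1}$, so $a=b$ and therefore $x={}^{a^{-1}}z={}^{b^{-1}}z=y$. Hence $\alpha$ is a bijection, and Lemma~\ref{(L)Invertibility}(3) yields that $f$ is skew invertible.

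The main point — really the only non-routine one — is the injectivity of $\alpha$, and inside it the observation that skew convexity of $f$ is precisely the additivity of every $\psi_z$; once that is recognized, the hypothesis $f(X)\subset K^*$ automatically makes $\psi_z$ have trivial kernel, and the remainder is just bookkeeping with the $K^*$-action. I do not expect any further obstacle.
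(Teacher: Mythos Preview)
Your proof is correct and follows essentially the same approach as the paper: both reduce the claim to Lemma~\ref{(L)Invertibility} and establish injectivity of $x\mapsto{}^{f(x)}x$ by exploiting that skew convexity makes $\lambda\mapsto(f\diamond\lambda)(z)$ additive with trivial kernel. Your packaging via the auxiliary map $\psi_z$ is a bit cleaner than the paper's bare-hands computation, and you invoke part~(3) directly (checking surjectivity explicitly) rather than combining parts~(1) and~(2) as the paper does, but the substance is the same.
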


\begin{proof}
	The ``only if" direction follows from Part 1 of Lemma \ref{(L)Invertibility}. To prove the converse, we use  Part (2) of Lemma \ref{(L)Invertibility}. Therefore, we need only show that if ${}^{f(x)}x={}^{f(y)}y$, then $x=y$. Let ${}^{f(x)}x={}^{f(y)}y$ for some $x,y\in X$. Then $y$ and $x$ are in the same orbit, implying that there exists $a\in K^*$ such that $y={}^{a}x$. So, 
	$${}^{f(x)}x={}^{f(y)}y={}^{f({}^{a}x)a}x.$$
	It follows that $f(x)b=f({}^ax)a$ for some $b\in K^*$ satisfying ${}^bx=x$. Note that $f({}^{-a}x)=f({}^ax)$ because 
	$$0=f\diamond (a+(-a))=f\diamond a +f\diamond (-a)\implies f\diamond (-a)=-f\diamond a.$$ 
	Therefore, we can write
	$$0=f({}^bx)b-f({}^{-a}x)a=(f\diamond b+f\diamond (-a))(x)=\left( f\diamond (b-a)\right)(x),$$
	implying that $a=b$. Since $f(X)\subset K^*$, we must have $x={}^bx={}^ax=y$.
\end{proof}
	     We conclude the algebraic part of the paper with an example of a skew-invertible function. 
	\begin{example}\label{(Ex)Invertibility}
		As in Example \ref{(Ex)(s,delta)-skewmaps}, let $\s\colon  K\to K$ be an endomorphism and $\delta\colon  K\to K$ be a $\s$-derivation. Let $A\subset K$ be an invariant set under the $(\s,\delta)$-action. Using Lemma \ref{(L)Invertibility} and Proposition \ref{(P)inverserightinverse}, we see that a linear  polynomial $T-c\in K[T;\s,\delta]$, where $c\in K$, is skew invertible as an element of $\mathcal{S}(A)$ if and only if (1) $T-c$ does not have a root in $A$, and (2) for any $a\in A$, there exists $b\in K^*$ such that $(T-c)({}^ba)=b^{-1}$. The first condition is equivalent to $c\notin A$. The second condition is equivalent to the condition that for any $a\in A$, there exists $b\in K^*$ satisfying the equation 
		${}^ba-c=b^{-1}$, or equivalently, the equation  $$\s(b)a+\delta(b)-cb=1.$$
		Note that if $T-c$ is skew invertible, then for  any $a\in A$, we have
		$$ b=(T-c)^{\langle-1\rangle}(a)\iff \s(b)a+\delta(b)-cb=1.$$
		In the case where $\delta$ is zero and $\s$ is the identity homomorphism, we have 
		$$ b=(T-c)^{\langle-1\rangle}(a)\iff ba-cb=1.$$
	\end{example}

\end{subsection}




\end{section} 
\begin{section}{Skew analysis over quaternions}
	A central problem of ``quaternionic analysis" is to find a suitable notion of ``quaternionic derivative''. It is known that the most obvious definition, namely, 
	   $$F'(q_0)=\lim_{q\to q_0}\left( F(q)-F(q_0)\right)(q-q_0)^{-1},$$
	is too restrictive. From the point of view of the skew product, one is led to the informal definition 
	 $$F'(q_0)=\lim_{q\to q_0}\left( \left( F-F(q_0)\right)\diamond (T-q_0)^{\langle -1 \rangle}\right)(q),$$
	 which we shall call the \textit{skew derivative} (see Definition \ref{(D)skewderivatvie2} for the precise definition). This section deals with the notion of the skew derivative and develops a version of quaternionic analysis based on the skew derivative.   
	
	Throughout this section,  $\mathbb{H}$ denotes the skew field of quaternions. Given a quaternion 
	$$q=r_0+r_1i+r_2j+r_3k, \text{ where } r_0,r_1,r_2,r_3\in\mathbb{R},$$
	we denote the real part, imaginary part, conjugate and norm of $q$ by 
	$$\text{Re}(q)=r_0,\, \text{Im}(q)=r_1i+r_2j+r_3k,$$ $$\overline{q}=r_0-r_1i-r_2j-r_3k  \text{ and } |q|=\sqrt{r_0^2+r_1^2+r_2^2+r_3^2}, \,\, \text{ respectively.}$$ 
	In this section, we fix the following action of  $\mathbb{H}^*$ on $\mathbb{H}$:
	${}^{p}q=pqp^{-1}.$ The orbit containing a quaternion $q\in\mathbb{H}$ is denoted by $O(q)$. It is known that for all $p,q\in \mathbb{H}$, $O(p)=O(q)$ holds if and only if $\text{Re}(p)=\text{Re}(q)$ and $|p|=|q|$ hold. In particular, an orbit  is either a single point, called a \textit{trivial orbit}, or a 2-dimensional sphere, called a  \textit{nontrivial orbit}. We also consider $\mathbb{H}$ as a topological space where the topology is  the Euclidean topology induced by the norm $q\mapsto |q|$.  For convenience, we shall use the following terminology: A subset of $\mathbb{H}$ is called a \textit{region} if it is both open and invariant under the action ${}^{p}q=pqp^{-1}.$ 
	 
	 \begin{subsection}{Skew-convex functions on $\mathbb{H}$}\label{(S)SkewConvex}	
	 	In this section, we study skew-convex functions on invariant subsets of $\mathbb{H}$. We begin with a characterization of skew-convex functions on orbits. 
	 	\begin{proposition}\label{(P)CharacterzationSkewconvex}
	 		Let $F\colon  O\to \mathbb{H}$ be a function on an orbit $O$ of $\mathbb{H}$. Then, $F$ is skew convex if and only if $F$ is (left) affine in the sense that there exist constants $a,b\in \mathbb{H}$ such that
	 		$$F(q)=a+bq, \text{ for all } q\in O.$$
	 	\end{proposition}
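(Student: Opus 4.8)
The plan is to prove the two implications separately: the forward one is a one-line computation, and the reverse one exploits the concrete geometry of the conjugation orbits. For the ``if'' direction, if $F(q)=a+bq$ with $a,b\in\mathbb{H}$, then for every $c\in\mathbb{H}$ and $q\in O$ one computes $(F\diamond c)(q)=F({}^{c}q)\,c=(a+b\,cqc^{-1})c=ac+bcq$ (this also holds when $c=0$, both sides being $0$); since $ac+bcq$ is additive in $c$, we get $F\diamond(c_1+c_2)=F\diamond c_1+F\diamond c_2$, so $F\in\mathcal{S}(O)$.

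For the converse, if $O$ is trivial, say $O=\{q_0\}$, then $F(q_0)=a+bq_0$ with $b=0$ and $a=F(q_0)$; so assume $O$ is a nontrivial orbit (a $2$-sphere) and fix $q_0\in O$. The idea is to encode $F$ near $q_0$ in the single map $h\colon\mathbb{H}\to\mathbb{H}$ given by $h(c):=F({}^{c}q_0)\,c$ for $c\neq 0$ and $h(0):=0$. Since $h(c)=(F\diamond c)(q_0)$, skew convexity of $F$ is precisely additivity of $h$. Moreover, writing $q_0=\mathrm{Re}(q_0)+|\mathrm{Im}(q_0)|\,I$ with $I$ an imaginary unit, every $d$ in $\mathbb{C}_I:=\mathbb{R}+\mathbb{R}I$ commutes with $q_0$, so ${}^{cd}q_0={}^{c}q_0$ and hence $h(cd)=h(c)\,d$ for all $c\in\mathbb{H}$ and $d\in\mathbb{C}_I$. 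Picking an imaginary unit $J$ orthogonal to $I$, one has $\mathbb{H}=\mathbb{C}_I\oplus\mathbb{C}_IJ$, the relation $Jz=\bar z J$ for $z\in\mathbb{C}_I$ (so $wJ=J\bar w$), and ${}^{J}q_0=\bar q_0$; combining these, for $c=z+wJ$ with $z,w\in\mathbb{C}_I$,
$$h(c)=h(z)+h(wJ)=h(1)\,z+h(J)\,\bar w=F(q_0)\,z+F(\bar q_0)\,J\bar w,$$
so that $h$, and therefore $F|_O$, is completely determined by the two values $F(q_0)$ and $F(\bar q_0)$.

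To conclude, since $O$ is nontrivial, $q_0-\bar q_0=2\,\mathrm{Im}(q_0)$ is invertible, so there are unique $a,b\in\mathbb{H}$ with $a+bq_0=F(q_0)$ and $a+b\bar q_0=F(\bar q_0)$. Setting $E(q):=a+bq$, the ``if'' direction gives $E\in\mathcal{S}(O)$, hence $G:=F-E\in\mathcal{S}(O)$ by Theorem \ref{(T)Thetwistedring}, and $G(q_0)=G(\bar q_0)=0$. Applying the displayed formula to $G$ in place of $F$ yields $G({}^{c}q_0)\,c=G(q_0)\,z+G(\bar q_0)\,J\bar w=0$ for all $c\in\mathbb{H}^{*}$, so $G({}^{c}q_0)=0$; since ${}^{c}q_0$ ranges over all of $O$, this forces $G\equiv 0$, i.e. $F(q)=a+bq$ on $O$. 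The main obstacle — and the heart of the argument — is the reverse direction: recognizing that skew convexity alone, with \emph{no} continuity hypothesis, together with the single commutation relation $h(cd)=h(c)d$ on $\mathbb{C}_I$, already collapses $F|_O$ onto an affine function through the two-parameter formula above. The supporting quaternionic identities $Jz=\bar z J$ and ${}^{J}q_0=\bar q_0$ are elementary, and no estimates enter anywhere.
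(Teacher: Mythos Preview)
Your proof is correct, and it takes a genuinely different route from the paper's. The paper fixes $q_0\in O$ and shows directly that the difference quotient $(F(q)-F(q_0))(q-q_0)^{-1}$ is independent of $q\in O\setminus\{q_0\}$; the key computation rewrites each $q_i$ as ${}^{(q_i-q_0)^{-1}}\bar q_0$, applies skew convexity with the pair $(q_1-q_0)^{-1},-(q_2-q_0)^{-1}$, and then checks the identity ${}^{((q_1-q_0)^{-1}-(q_2-q_0)^{-1})}\bar q_0=q_0$. Your argument instead linearizes the problem: the map $h(c)=(F\diamond c)(q_0)$ is additive by skew convexity and right $\mathbb{C}_I$-linear because $\mathbb{C}_I$ centralizes $q_0$, so $h$ is determined by its values on the right $\mathbb{C}_I$-basis $\{1,J\}$, i.e.\ by $F(q_0)$ and $F(\bar q_0)$; subtracting the affine function matching those two values finishes. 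Your approach is arguably more structural---it makes transparent that $\mathcal{S}(O)$ is the two-dimensional left $\mathbb{H}$-space spanned by $1$ and $T$, and it sidesteps the somewhat opaque conjugation identity used in the paper---while the paper's computation yields the orbital derivative formula $(F(q)-F(q_0))(q-q_0)^{-1}=\text{const}$ in one stroke, which is exactly what is needed for Definition~\ref{(D)OrbitalDerivatvie}.
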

	 	\begin{proof}
	 		If $O$ is a trivial orbit, there is nothing to prove, so assume that $O$ is nontrivial. The proof of the ``if" direction is straightforward. To prove the converse, fix $q_0\in O$. We need only show that
	 		$$(F(q_1)-F(q_0))(q_1-q_0)^{-1}=(F(q_2)-F(q_0))(q_2-q_0)^{-1}, $$
	 		for all $q_1\neq q_2\in O\setminus \{q_0\}$. This identity can be rewritten as
	 		\begin{equation}\label{(I)rewritten}
	 		F(q_1)(q_1-q_0)^{-1}-F(q_2)(q_2-q_0)^{-1}=F(q_0)\left( (q_1-q_0)^{-1}-(q_2-q_0)^{-1}\right).	
	 		\end{equation}
	 		Since $q_i\in O(q_0)$, we have
	 		$$q_i=(q_i-q_0)^{-1}\overline{q}_0(q_i-q_0), \text{ for } i=1,2,$$
	 		using which the left-hand side of Identity \ref{(I)rewritten} can be rewritten as 
	 		$$
	 		F({}^{(q_1-q_0)^{-1}}\overline{q}_0)(q_1-q_0)^{-1}-F({}^{(q_2-q_0)^{-1}}\overline{q}_0)(q_2-q_0)^{-1}
	 		$$
	 		$$
	 		=F( {}^{\left( (q_1-q_0)^{-1}-(q_2-q_0)^{-1}\right) }\overline{q}_0)\left( (q_1-q_0)^{-1}-(q_2-q_0)^{-1}\right).
	 		$$
	 		It is easy to check that ${}^{\left( (q_1-q_0)^{-1}-(q_2-q_0)^{-1}\right) }\overline{q}_0=q_0$ form which Identity \ref{(I)rewritten} follows. 
	 	\end{proof}
 	In view of this proposition, the following definition is well-defined.
 	\begin{definition}\label{(D)OrbitalDerivatvie}
 		The orbital derivative of a skew-convex function $F:O\to\mathbb{H}$ at a nontrivial orbit $O$ is defined to be the value 
 		$$\left(F(p)-F(q) \right)(p-q)^{-1}, \text{ where } q\neq p\in O.$$
 		The orbital derivative of $F$ at $O=O(q)$ is denoted by $F^{o}(O)$ or $F^o(q)$. If $O$ is a trivial orbit, the orbital derivative of $F$ at $O$ is defined to be zero. 
 	\end{definition}
	 Let us give another characterization of skew-convex functions which will be needed later. In the following proposition, the function $T\colon  \mathbb{H}\to \mathbb{H}$ denotes the identity function on $\mathbb{H}$. 
 		\begin{proposition}\label{(P)SkeconvexT-q}
 			Let $O\subset\mathbb{H}$ be a nontrivial orbit. A function $F\colon  O\to \mathbb{H}$ is skew convex on $O$ if and only if $F$ can be written as $$F=a+E\diamond (T-q),$$ for some $a\in \mathbb{H}$, $q\in O$ and some function $E\colon  O\to \mathbb{H}$, in which case,  $E(\overline{q})$ is equal to the orbital derivative of $F$ at $O$. 
 		\end{proposition}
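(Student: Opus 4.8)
The plan is to show that, on a single nontrivial orbit, the skew product by a linear factor $T-q$ collapses to an ordinary left multiplication; once this is established, the statement is essentially a restatement of Proposition \ref{(P)CharacterzationSkewconvex}. The key computation I would carry out first is the identity
$$ {}^{p-q}p=(p-q)\,p\,(p-q)^{-1}=\overline{q},\qquad p,q\in O,\ p\neq q. $$
Since $p$ and $q$ lie in the same orbit we have $\mathrm{Re}(p)=\mathrm{Re}(q)=:r$ and $|\mathrm{Im}(p)|=|\mathrm{Im}(q)|$; writing $u=\mathrm{Im}(p)$ and $v=\mathrm{Im}(q)$, the claimed identity is equivalent to $(p-q)p=\overline q\,(p-q)$, and after expanding both sides and cancelling the common terms $r(u-v)$ and $-vu$ this reduces to $u^{2}=v^{2}$, which holds because $u^{2}=-|u|^{2}=-|v|^{2}=v^{2}$ for purely imaginary quaternions. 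I would also note here that $\overline q\in O$ (same real part and norm as $q$), so $E(\overline q)$ is meaningful. The consequence I need is that for \emph{every} function $E\colon O\to\mathbb{H}$ and every $p\in O$,
$$ \bigl(E\diamond(T-q)\bigr)(p)=E(\overline q)\,(p-q), $$
the case $p=q$ being trivially $0$ on both sides and the case $p\neq q$ following from the displayed identity together with Definition \ref{(D)product}.

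With this in hand the equivalence is formal. For the ``if'' direction, if $F=a+E\diamond(T-q)$ then $F(p)=a+E(\overline q)(p-q)=\bigl(a-E(\overline q)q\bigr)+E(\overline q)p$ for all $p\in O$, so $F$ is left affine and hence skew convex by Proposition \ref{(P)CharacterzationSkewconvex}. For the converse, if $F$ is skew convex, Proposition \ref{(P)CharacterzationSkewconvex} supplies $\alpha,\beta\in\mathbb{H}$ with $F(p)=\alpha+\beta p$; I would then fix any $q\in O$, set $a=F(q)=\alpha+\beta q$, and let $E\colon O\to\mathbb{H}$ be the constant function $\beta$ (any $E$ with $E(\overline q)=\beta$ works). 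By the boxed formula, $\bigl(a+E\diamond(T-q)\bigr)(p)=F(q)+\beta(p-q)=\alpha+\beta p=F(p)$, so $F=a+E\diamond(T-q)$.

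Finally, for the ``in which case'' clause: whenever $F=a+E\diamond(T-q)$ the formula above gives $F(p)-F(p')=E(\overline q)(p-p')$ for all $p,p'\in O$, hence $E(\overline q)=\bigl(F(p)-F(p')\bigr)(p-p')^{-1}$ for any $p\neq p'$ in $O$, which is exactly the orbital derivative $F^{o}(O)$ of Definition \ref{(D)OrbitalDerivatvie} (well-defined since $F$ is skew convex). The only genuinely non-formal step is the quaternionic computation ${}^{p-q}p=\overline q$; everything after that is bookkeeping, and in fact this same identity is already implicit in the proof of Proposition \ref{(P)CharacterzationSkewconvex}, where ${}^{(q_i-q_0)^{-1}}\overline q_0=q_i$ is used.
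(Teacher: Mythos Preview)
Your proof is correct and follows essentially the same route as the paper. Both arguments hinge on the single quaternionic identity that, for $p\neq q$ in the same nontrivial orbit, conjugating $p$ by $p-q$ yields $\overline q$; the paper states this as ${}^{pq-qp}q=\overline q$ (parameterizing the orbit as ${}^{p}q$), while you state it directly as ${}^{p-q}p=\overline q$, but the content is identical. Your choice to isolate the consequence $(E\diamond(T-q))(p)=E(\overline q)(p-q)$ up front and then read off both directions and the orbital-derivative clause from it is a slightly cleaner organization of the same computation.
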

 		\begin{proof}
 			The ``only if" direction follows directly from Proposition \ref{(P)CharacterzationSkewconvex}. To prove the other direction,  let  $F=a+E\diamond (T-q)$, for some $a\in \mathbb{H}$, $q\in U$ and some function $E\colon  O\to \mathbb{H}$. Evaluating at a quaternion ${}^{p}q$, where $p\in\mathbb{H}$ satisfies $pq\neq qp$, we have
 			$$F({}^{p}q)=a+E({}^{({}^pq-q)p}q)({}^{p}q-q)=a+E({}^{pq-qp}q)({}^pq-q).$$
 			Using the elementary formula 
 			$${}^{pq-qp}q=\overline{q}, \text{ for all } p,q\in\mathbb{H} \text{ with } pq\neq qp,$$
 			we obtain $F({}^{p}q)=a+E(\overline{q})({}^pq-q)$, or equivalently, 
 				$$F(p)=a+E(\overline{q})(p-q), \text{ for all } p\in O.$$ 	
 			In particular, $F$ is skew convex. Note that $a=F(q)$ and $$E(\overline{q})=\left(F(p)-F(q) \right)(p-q)^{-1}=F^{o}(O), \text{ where } q\neq p\in O.$$ 
 		\end{proof} 
 	 In the following proposition, we collect some properties of the orbital derivative. 
 	\begin{proposition}\label{(P)PropertiesOrbitalD}
 		Let $F, G\colon  U\to \mathbb{H}$ be skew-convex functions on a region $U$ in $\mathbb{H}$. Then, $F+G$ and $F\diamond G$ are skew convex on $U$, and moreover, we have 
 		$$(F+G)^{o}=F^{o}+G^{o}, \text{ on $U$  and },$$
 		$$(F\diamond G)^{o}(p)=(F^{o}\diamond G)(p)+(F\diamond G^{o})(\overline{p}), \text{ for all $p\in U$ }.$$
 	\end{proposition}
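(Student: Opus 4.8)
The plan is to reduce the whole statement to elementary computations on a single orbit, exploiting the fact (Proposition~\ref{(P)CharacterzationSkewconvex}) that a skew-convex function on an orbit is left affine. That $F+G$ and $F\diamond G$ are skew convex on $U$ is immediate from Theorem~\ref{(T)Thetwistedring} applied with $X=U$: the set $\mathcal{S}(U)$ is a ring under pointwise addition and $\diamond$, hence closed under both operations; so for every orbit $O\subset U$ the restrictions $(F+G)|_O$ and $(F\diamond G)|_O$ again lie in $\mathcal{S}(O)$ and their orbital derivatives are defined in the sense of Definition~\ref{(D)OrbitalDerivatvie}. The additivity $(F+G)^{o}=F^{o}+G^{o}$ will then follow by substituting $F+G$ into the difference quotient of Definition~\ref{(D)OrbitalDerivatvie} at two distinct points of a nontrivial orbit and expanding; on trivial orbits both sides vanish.

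For the product formula I would fix $p\in U$ and set $O=O(p)$, noting that $\overline p\in O$ since $\mathrm{Re}(\overline p)=\mathrm{Re}(p)$ and $|\overline p|=|p|$. If $O$ is trivial then $p$ is real, all orbital derivatives in sight vanish, and Definition~\ref{(D)product} makes both sides $0$ at once, so one may assume $O$ is a nontrivial ($2$-sphere) orbit. By Proposition~\ref{(P)CharacterzationSkewconvex} write $F(q)=a+bq$ and $G(q)=c+dq$ for all $q\in O$; comparing with Definition~\ref{(D)OrbitalDerivatvie} gives $b=F^{o}(O)$ and $d=G^{o}(O)=G^{o}(\overline p)$. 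For $q\in O$ with $G(q)\neq 0$, Definition~\ref{(D)product} yields
\[
(F\diamond G)(q)=\bigl(a+b\,{}^{G(q)}q\bigr)\,G(q)=a\,G(q)+b\,G(q)\,q=ac+(ad+bc)\,q+bd\,q^{2}.
\]
Since every $q\in O$ satisfies the common quadratic relation $q^{2}=2\,\mathrm{Re}(p)\,q-|p|^{2}$, the right-hand side equals $(ac-|p|^{2}bd)+(ad+bc+2\,\mathrm{Re}(p)\,bd)\,q$, an affine function of $q$ that agrees with the (affine) function $F\diamond G$ off the zero set of $G$ in $O$, hence on all of $O$; reading off the linear coefficient gives
\[
(F\diamond G)^{o}(O)=ad+bc+2\,\mathrm{Re}(p)\,bd.
\]

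It then remains to expand the right-hand side of the claimed identity. Since $F^{o}$ is constant equal to $b$ on $O$ and ${}^{G(p)}p\in O$, Definition~\ref{(D)product} gives $(F^{o}\diamond G)(p)=b\,G(p)=bc+bd\,p$; since $G^{o}$ is constant equal to $d$ on $O$ and ${}^{d}\overline p\in O$, it gives $(F\diamond G^{o})(\overline p)=F\bigl({}^{d}\overline p\bigr)\,d=\bigl(a+bd\,\overline p\,d^{-1}\bigr)d=ad+bd\,\overline p$. Adding these and using $p+\overline p=2\,\mathrm{Re}(p)$ yields $ad+bc+2\,\mathrm{Re}(p)\,bd=(F\diamond G)^{o}(p)$, which is exactly what is wanted.

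I expect the only delicate point to be the middle computation: one must keep in mind that on a nontrivial orbit the ``variable'' $q$ is not free but is pinned down by $q^{2}=2\,\mathrm{Re}(p)\,q-|p|^{2}$, and it is precisely this relation — mirrored on the other side by $p+\overline p=2\,\mathrm{Re}(p)$ — that kills the quadratic term and forces the two sides to agree; the asymmetric appearance of $p$ and $\overline p$ in the statement is exactly what absorbs the non-commutativity of the conjugation in ${}^{G(q)}q$. One should additionally dispatch the degenerate subcases (for instance $G(p)=0$, $d=0$, or $G\equiv 0$ on $O$) straight from Definition~\ref{(D)product}, where both sides are seen to vanish or to reduce correctly; these are routine.
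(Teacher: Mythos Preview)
Your proof is correct, and it takes a somewhat different route from the paper's. The paper fixes $p$, writes $G=G(p)+G^{o}(O)(T-p)$ and $F=F({}^{G(p)}p)+F^{o}(O)(T-{}^{G(p)}p)$ on $O$, expands $F\diamond G$ using skew convexity and associativity to obtain a presentation of the form $F\diamond G=(F\diamond G)(p)+E\diamond(T-p)$ with $E=F^{o}(O)G(p)+F\diamond(G^{o}(O))$, and then invokes Proposition~\ref{(P)SkeconvexT-q} to read off $(F\diamond G)^{o}(p)=E(\overline{p})$. Your argument instead bypasses Proposition~\ref{(P)SkeconvexT-q} entirely: you write $F$ and $G$ in explicit affine coordinates $a+bq$, $c+dq$ via Proposition~\ref{(P)CharacterzationSkewconvex}, compute $(F\diamond G)(q)=ac+(ad+bc)q+bdq^{2}$ directly from Definition~\ref{(D)product}, and then use the orbit relation $q^{2}=2\,\mathrm{Re}(p)\,q-|p|^{2}$ to extract the linear coefficient. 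The paper's approach is more structural (it keeps the skew-product formalism intact and generalizes cleanly to later computations such as Proposition~\ref{(P)SkewderivativeFirstproperties}), while yours is more elementary and self-contained, needing only the affine characterization and the quadratic relation on an orbit. Both handle the degenerate case $G(p)=0$ separately; your affine formula in fact already gives the right value there, since $a(c+dq)+b(c+dq)q=0$ when $c+dq=0$, so your caveat about the zero set of $G$ is harmless but not strictly needed.
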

 	\begin{proof}
 		The first part follows from Theorem \ref{(T)Thetwistedring}. The first formula is trivial. To prove the second formula, let $p\in U$ be fixed and set $O=O(p)$. We assume that $G(p)\neq 0$. The case $G(p)=0$ is left to the reader.  Then, we have
 		$$G=G(p)+G^{o}(O)(T-p), \text{ as functions on } O,$$
 		$$F=F({}^{G(p)}p)+F^{o}(O)(T-{}^{G(p)}p)\text{ as functions on } O.$$
 		It easy to check that the following identity holds on $U$: 
 		$$F\diamond G=F({}^{G(p)}p)G(p)+\left(F^{o}(O)G(p)+F\diamond (G^{o}(O))  \right)\diamond (T-p).$$
 		By Proposition \ref{(P)SkeconvexT-q}, we have
 		\begin{align*}
 		(F\diamond G)^{o}(p)=& \left(F^{o}(O)G(p)+F\diamond (G^{o}(O) ) \right)(\overline{p})\\
 		=&(F^{o}\diamond G)(p)+(F\diamond G^{o})(\overline{p}).
 		\end{align*}
 	\end{proof}	
	 	Now, we give some examples of skew-convex functions on $\mathbb{H}$. The functions $q\mapsto \R(q)^n$ and $q\mapsto |q|^n$, where $n$ is a natural number,  are skew convex on $\mathbb{H}$ since they are constant on all orbits. As another examples of skew-convex functions, we have polynomials $P(T)\in\mathbb{H}[T]$, where $\mathbb{H}[T]$ is the ring of polynomials in a central indeterminate $T$ over $\mathbb{H}$ (see Example \ref{(Ex)(s,delta)-skewmaps}). \text{Re}call that a (left) polynomial  $P(T)=\sum_{m=0}^nq_mT^m\in\mathbb{H}[T]$, as a function on $\mathbb{H}$, is evaluated as follows  $$P(q)=\sum_{m=0}^nq_mq^m.$$ 
	 	It is easy to verify that the orbital derivative of  $P(T)=\sum_{m=0}^nq_mT^m\in\mathbb{H}[T]$ at an orbit $O(q)$ equals
	 	$$P^o(q)=\sum_{m=1}^nq_m\left( \sum_{k=0}^{m-1} \overline{q}^kq^{m-1-k}\right) .$$
	 	To introduce the next class of skew-convex functions on $\mathbb{H}$, we need the following result.  	 
 			\begin{proposition}\label{(P)limitofskewfunctions}
 				Let $X\subset \mathbb{H}$ be invariant and $F_1,F_2,...$ be a sequence of skew-convex $\mathbb{H}$-valued functions on $X$. Suppose that the sequence $F_1,F_2,...$ converges pointwise to a function $F\colon  X\to \mathbb{H}$. Then, $F$ is skew convex on $X$. 
 			\end{proposition}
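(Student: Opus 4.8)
The plan is to reduce skew convexity of $F$ to a family of pointwise identities and then pass to the limit in each of them termwise. Recall from Definition \ref{(D)skew convex} that $F$ is skew convex exactly when $F\diamond(a+b)=F\diamond a+F\diamond b$ for all $a,b\in\mathbb{H}$, which, evaluated at a point $x\in X$ (using the identity $(h\diamond f)(x)=(h\diamond f(x))(x)$ from the proof of Lemma \ref{(L)leftdist}), amounts to
$$(F\diamond(a+b))(x)=(F\diamond a)(x)+(F\diamond b)(x)\qquad\text{for all }a,b\in\mathbb{H}\text{ and }x\in X.$$
By \eqref{(D)product}, for a constant $c\in\mathbb{H}$ we have $(F\diamond c)(x)=F({}^{c}x)\,c$ if $c\neq 0$ and $(F\diamond c)(x)=0$ if $c=0$; since $X$ is invariant, ${}^{c}x\in X$ whenever $c\in\mathbb{H}^*$, so all the points in question lie in the common domain $X$.

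First I would fix $a,b\in\mathbb{H}$ and $x\in X$ and look separately at the three terms $(F_n\diamond(a+b))(x)$, $(F_n\diamond a)(x)$, $(F_n\diamond b)(x)$ as $n\to\infty$. Each of them is either identically $0$ (when the corresponding constant vanishes) or equals $F_n(y)\,c$ for a point $y\in X$ and a scalar $c\in\mathbb{H}^*$ that do not depend on $n$; in the latter case $F_n(y)\,c\to F(y)\,c$ because $F_n(y)\to F(y)$ by hypothesis and right multiplication by $c$ is continuous on $\mathbb{H}$. Hence each of the three terms converges to the corresponding term formed with $F$. Since each $F_n$ is skew convex, $(F_n\diamond(a+b))(x)=(F_n\diamond a)(x)+(F_n\diamond b)(x)$ for every $n$; letting $n\to\infty$ and using the convergences just noted gives $(F\diamond(a+b))(x)=(F\diamond a)(x)+(F\diamond b)(x)$. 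As $a,b\in\mathbb{H}$ and $x\in X$ were arbitrary, $F$ is skew convex on $X$.

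I do not expect a genuine obstacle here: the key observation is that the skew product with a constant only ever involves the value of a function at a single point, so pointwise convergence is exactly what is needed and no uniformity is required. The only things to be careful about are the degenerate cases $a=0$, $b=0$ or $a+b=0$, all handled directly by \eqref{(D)product}, and the remark that invariance of $X$ keeps ${}^{a}x$, ${}^{b}x$ and ${}^{a+b}x$ inside $X$, so that $F_n$ and $F$ are actually defined at these points.
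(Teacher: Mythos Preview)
Your proof is correct and follows exactly the approach the paper indicates: the paper's own proof simply says ``One can easily prove the proposition using the definition of a skew-convex function (see Definition \ref{(D)skew convex}),'' and you have carried out precisely that verification, carefully handling the degenerate cases and noting that invariance of $X$ keeps the relevant points in the domain.
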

 			\begin{proof}
 				One can easily prove the proposition using the definition of a skew-convex function (see Definition \ref{(D)skew convex}). 
 			\end{proof}
 		As an application of this proposition, we show that (left) spherical series are skew convex. Let us review some facts regarding spherical series over $\mathbb{H}$ (see also \cite{Anewseriesexpansion}). Suppose that $O\subset \mathbb{H}$ is an orbit. We associate a polynomial $P_O(T)\in \mathbb{H}[T]$ to $O$ as follows: If $O=\{r\}$ is a trivial orbit, we set $P_O(T)=T-r$. If $O$ is a nontrivial orbit, we put
 		$$P_O(T)=T^2-2\R(p)T+|p|^2, \text{ where } p\in O.$$
 		Note that $P_O(T)$ does not depend on the choice $p\in O$. By a \textit{(formal) spherical series centered at $O$}, we shall mean a series of the form
 		$$S(T)=\sum_{n=0}^\infty q_nP_O(T)^n,$$
 		where $q_0,q_1,...\in \mathbb{H}$. The value $S(q)$ at a point $q\in\mathbb{H}$ is defined to be 
 		$$S(q)=\sum_{n=0}^\infty q_nP_O(q)^n,$$ 
 		if the series converges. The notion of right spherical series is defined in a similar fashion. We note that a spherical series centered at a trivial orbit $O=\{r\}$ is just an ordinary series of the form 
 		$$\sum_{n=0}^\infty q_n(T-r)^n.$$
 		The following proposition is easily proved using the Weierstrass $M$-test.   
 		\begin{proposition}\label{(P)sphericalpower}
 		Let $\sum_{n=0}^\infty q_nP_O(T)^n$ be a  spherical power series centered at an orbit $O$. Suppose, furthermore, that $\limsup_{n\to \infty} |q_n|^{1/n}=1/R$ for some $R>0$. Then, the series converges absolutely and uniformly on the compact subsets of the region 
 			$$U(O,R) \colonequals \{q\in \mathbb{H}\,|\, |P_O(q)|<R \}.$$
 		\end{proposition}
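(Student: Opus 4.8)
The plan is to mimic the classical Cauchy--Hadamard argument for power series, using only the multiplicativity of the quaternionic norm and the continuity of $q\mapsto |P_O(q)|$.

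First I would record that $U(O,R)$ is genuinely a region. Openness is immediate, since $q\mapsto|P_O(q)|$ is continuous (it is polynomial in the real coordinates of $q$). For invariance, note that $P_O(T)$ has real coefficients, so $P_O(pqp^{-1})=pP_O(q)p^{-1}$ for every $p\in\mathbb{H}^*$; hence $|P_O({}^pq)|=|pP_O(q)p^{-1}|=|P_O(q)|$ by multiplicativity of the norm, and therefore $q\in U(O,R)$ implies ${}^pq\in U(O,R)$.

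Next, fix a compact set $K\subset U(O,R)$. Since $q\mapsto|P_O(q)|$ is continuous, it attains a maximum $\rho$ on $K$; as $K\subset U(O,R)$, this maximum satisfies $\rho<R$. Pick any $r$ with $\rho<r<R$. From $\limsup_{n\to\infty}|q_n|^{1/n}=1/R<1/r$ there is an $N$ such that $|q_n|^{1/n}<1/r$, i.e.\ $|q_n|<r^{-n}$, for all $n\ge N$. For such $n$ and every $q\in K$ we then have, again using multiplicativity of the norm,
$$|q_nP_O(q)^n|=|q_n|\,|P_O(q)|^n\le |q_n|\,\rho^n<\left(\frac{\rho}{r}\right)^n.$$
Since $0\le\rho/r<1$, the geometric series $\sum_{n\ge N}(\rho/r)^n$ converges, so the Weierstrass $M$-test yields absolute and uniform convergence of $\sum_{n}q_nP_O(T)^n$ on $K$.

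There is essentially no serious obstacle here; the only two points that require a moment's care are the invariance of $U(O,R)$ (which rests on $P_O$ having real coefficients, so that it commutes with conjugation by quaternions) and the strict inequality $\rho<R$ on a compact subset (which rests on compactness). Everything else is the standard geometric-series comparison, transported verbatim from $\mathbb{C}$ to $\mathbb{H}$ thanks to the identity $|ab|=|a|\,|b|$.
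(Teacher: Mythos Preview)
Your argument is correct and is precisely the approach the paper intends: the paper itself gives no detailed proof, merely stating that the proposition ``is easily proved using the Weierstrass $M$-test,'' and you have supplied exactly that argument together with the (useful) verification that $U(O,R)$ is indeed a region.
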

 		The number $R$ in this Proposition will be called the \textit{radius of convergence} of the spherical power series. The region  $U(O,R)$ will be called the \textit{region of convergence} of the spherical power series.
 		It follows immediately from Proposition \ref{(P)limitofskewfunctions} that any spherical series which is convergent on an invariant set $X$ is skew convex on $X$. It is clear that the orbital derivative of a spherical series is zero at its center. 
 		
 		We conclude this part with the following lemma the proof of which is left to the reader. 
 	\begin{lemma}\label{(L)limitskewconvexorbit}
 		Let $F\colon  U\to\mathbb{H}$ be a function on a region and $O\subset U$ be an orbit. If $F$ is skew convex on $U\setminus O$ and is continuous at all $p\in O$, then $F$ is skew convex on $U$. 
 	\end{lemma}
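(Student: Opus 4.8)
The plan is to reduce the statement to a claim about the single orbit $O$ and then to prove that claim by approximating $O$ with nearby orbits. First I would record that skew convexity is detected orbit by orbit: by the identity $(h\diamond f)(x)=(h\diamond f(x))(x)$ from the proof of Lemma \ref{(L)leftdist}, the value $(F\diamond c)(x)$ for $c\in\mathbb{H}$ depends only on the restriction of $F$ to the orbit $O(x)$, so $F$ is skew convex on $U$ if and only if $F|_{O'}$ is skew convex on $O'$ for every orbit $O'\subset U$. Since $F$ is skew convex on $U\setminus O$, every orbit of $U$ other than $O$ already carries a skew-convex restriction, and hence it suffices to prove that $F|_O$ is skew convex. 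If $O$ is a trivial orbit this is immediate (any function on a one-point orbit is skew convex, by Proposition \ref{(P)constantskewfunction}); so assume $O$ is nontrivial, in which case, by Proposition \ref{(P)CharacterzationSkewconvex}, the task reduces to showing that $F|_O$ is affine, that is, $F(q)=a+bq$ on $O$ for suitable constants $a,b\in\mathbb{H}$.

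Next I would set up the approximating orbits. Fix a representative $p_0=x_0+y_0 I\in O$ with $I$ an imaginary unit and $y_0=|\text{Im}(p_0)|>0$, so that $O=\{x_0+y_0 J:J^2=-1\}$. For $n$ large the point $p_n:=x_0+(y_0+\tfrac1n)I$ lies in the open set $U$, and since $U$ is a region and $|p_n|>|p_0|$, the orbit $O_n:=O(p_n)$ satisfies $O_n\subset U$ and $O_n\cap O=\emptyset$, hence $O_n\subset U\setminus O$. By the hypothesis and the restriction property following Proposition \ref{(P)actionpreserving}, $F$ is skew convex on each $O_n$, so by Proposition \ref{(P)CharacterzationSkewconvex} there exist $a_n,b_n\in\mathbb{H}$ with $F(p)=a_n+b_n p$ for all $p\in O_n$; moreover $b_n=(F(u)-F(v))(u-v)^{-1}$ for any two distinct $u,v\in O_n$.

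Then I would pass to the limit. Fix two distinct imaginary units $J_1\neq J_2$ and set $u_n=x_0+(y_0+\tfrac1n)J_1$ and $v_n=x_0+(y_0+\tfrac1n)J_2$, which lie in $O_n$ and converge to $u:=x_0+y_0 J_1$ and $v:=x_0+y_0 J_2$ in $O$, with $u\neq v$ since $y_0\neq 0$. Continuity of $F$ at the points $u,v\in O$ together with continuity of quaternionic inversion away from $0$ gives $b_n\to b:=(F(u)-F(v))(u-v)^{-1}$, and then $a_n=F(u_n)-b_n u_n\to a:=F(u)-bu$. Finally, for an arbitrary $q=x_0+y_0 J\in O$, put $q_n=x_0+(y_0+\tfrac1n)J\in O_n$; then $q_n\to q$, so that $F(q_n)=a_n+b_n q_n\to a+bq$, while at the same time $F(q_n)\to F(q)$ by continuity of $F$ at $q$. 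Therefore $F(q)=a+bq$ for every $q\in O$, so $F|_O$ is affine, and combined with the reduction of the first paragraph this shows $F$ is skew convex on $U$.

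The step I expect to be the main obstacle is the convergence of the coefficients $a_n,b_n$; this is precisely where the continuity of $F$ on $O$ enters, and it depends on the explicit description of the nontrivial orbits as $2$-spheres $x_0+y_0 S^2$, which lets us choose the approximating points $u_n,v_n,q_n$ so that they converge to prescribed points of $O$. The orbit-by-orbit reduction and the trivial-orbit case are routine, and only pointwise convergence (no uniformity in $n$) is needed.
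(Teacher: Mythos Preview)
The paper leaves this lemma's proof to the reader, so there is no argument to compare against. Your proof is correct: the orbit-by-orbit reduction is valid because $(F\diamond c)(x)$ depends only on $F|_{O(x)}$, and your approximation of $O$ by the nearby orbits $O_n=\{x_0+(y_0+\tfrac1n)J:J\in\mathbb{S}\}\subset U\setminus O$, together with the explicit parametrisation of these $2$-spheres, lets you pass to the limit in the affine coefficients $a_n,b_n$ using only continuity of $F$ at points of $O$.

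A shorter route, perhaps what the paper had in mind, bypasses Proposition~\ref{(P)CharacterzationSkewconvex} and verifies Definition~\ref{(D)skew convex} directly at $x\in O$: choose $x_n\to x$ with $x_n\in U\setminus O$ (possible since $O$ is at most a $2$-sphere in the $4$-dimensional open set $U$, hence has empty interior); for each nonzero $c\in\mathbb{H}$ one has ${}^{c}x_n\to{}^{c}x\in O$ by continuity of conjugation, so $(F\diamond c)(x_n)=F({}^{c}x_n)c\to F({}^{c}x)c=(F\diamond c)(x)$ by the assumed continuity of $F$ on $O$, and the identity $(F\diamond(a+b))(x_n)=(F\diamond a)(x_n)+(F\diamond b)(x_n)$ passes to the limit. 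Your approach is a bit longer but has the advantage of actually producing the affine form of $F$ on $O$, and in particular the orbital derivative $F^{o}(O)$, which is useful in later arguments such as Lemma~\ref{(L)derivativeofske-convex}.
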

 		
	 \end{subsection}
 \begin{subsection}{Skew-invertible functions on $\mathbb{H}$}\label{(S)Skewinvertible}	
 	In this section, we study the question of skew invertibility for skew-convex functions on invariant subsets of $\mathbb{H}$. We begin with the following result.	
 	\begin{proposition}\label{(P)skeweinvertibleonH}
 		Let $F\colon  O\to \mathbb{H}$ be a skew-convex function on an orbit $O\subset \mathbb{H}$. Then, $F$ is  skew invertible if and only if  $F(x)=0$ has no solutions on $O$.
 	\end{proposition}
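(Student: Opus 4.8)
The plan is to prove the two directions separately. For the ``only if'' direction, note that if $F$ is skew invertible then part (3) of Lemma \ref{(L)Invertibility}, applied with $X=O$ and $K=\mathbb{H}$, immediately gives $F(O)\subset\mathbb{H}^*$, so $F$ has no zero on $O$. For the converse I would first dispose of the trivial case: if $O=\{r\}$ then $r\in\mathbb{R}$, so ${}^{F(r)}r=F(r)\,r\,F(r)^{-1}=r$, the skew product on $\mathcal{F}(\{r\})$ is ordinary multiplication in $\mathbb{H}$, and $F(r)\ne0$ makes $F$ skew invertible. So assume $O$ is a nontrivial orbit. By Proposition \ref{(P)CharacterzationSkewconvex} we may write $F(q)=a+bq$ with $a,b\in\mathbb{H}$, and since $F$ is skew convex with $F(O)\subset\mathbb{H}^*$, Proposition \ref{(P)inverserightinverse} reduces the claim to the following: for each $x\in O$ there exists $p\in\mathbb{H}^*$ with $F({}^{p}x)=p^{-1}$.

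The main step is then a short linear-algebra argument. Fixing $x\in O$ and writing $F({}^{p}x)=a+bpxp^{-1}$, the equation $F({}^{p}x)=p^{-1}$ is, after right multiplication by $p$, equivalent to $ap+bpx=1$. I would consider the $\mathbb{R}$-linear endomorphism $\psi$ of $\mathbb{H}$ given by $\psi(u)=au+bux$ and show it is injective, hence bijective by finite-dimensionality, so that $p:=\psi^{-1}(1)$ solves the equation (and $p\ne0$ since $\psi(0)=0\ne1$). Injectivity is where the hypothesis enters: if $\psi(u)=0$ with $u\ne0$, then $au=-bux$; if $b=0$ this forces $u=0$, since then $F$ is the nonzero constant $a$ on $O$; and if $b\ne0$ it gives $-b^{-1}a={}^{u}x\in O(x)=O$, contradicting $F(O)\subset\mathbb{H}^*$ because $F(-b^{-1}a)=a+b(-b^{-1}a)=0$. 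Alternatively, when $b\ne0$ one can write $F=b\diamond(T-c)$ with $c=-b^{-1}a$ and read off the same conclusion from Example \ref{(Ex)Invertibility}, whose two conditions become precisely $c\notin O$ and the solvability of $ap+bpx=1$.

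I do not anticipate a genuine obstacle; the proof is short once the reformulation is in place. The only point worth emphasizing is that Proposition \ref{(P)inverserightinverse} reduces skew invertibility of $F$ to solving, for each $x$, a single $\mathbb{H}$-linear equation, and that the associated operator $\psi$ degenerates exactly when the unique root $-b^{-1}a$ of $F$ lies on $O$ --- that is, exactly when $F$ has a zero on $O$. Everything else is routine verification.
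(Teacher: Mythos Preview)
Your proposal is correct and follows essentially the same route as the paper: both reduce the ``if'' direction, via Proposition \ref{(P)inverserightinverse} and the affine form $F(q)=a+bq$ from Proposition \ref{(P)CharacterzationSkewconvex}, to solving the $\mathbb{R}$-linear equation $ap+bpx=1$ and then invoke finite-dimensionality. In fact you supply a detail the paper leaves implicit, namely the verification that the kernel of $\psi(u)=au+bux$ is zero precisely because $F$ has no root on $O$; the paper simply asserts ``Since its kernel is zero''.
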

 	\begin{proof}
 		The result is clear if $O$ is a trivial orbit.  So, we assume that $O$ is nontrivial. The ``only if" direction follows from Part 3 of Lemma \ref{(L)Invertibility}. We prove the other direction.  
 		Since $F$ is skew-convex, there exists constants $a,b\in \mathbb{H}$ such that $F(p)=a+bp$ for all $p\in O$. 
 		By Lemma \ref{(L)Invertibility} and Proposition \ref{(P)inverserightinverse}, we need only show that for any $p\in O$, there exists $x\in \mathbb{H}^*$ such that $F({}^{x}p)=x^{-1}$. For a fixed $p\in O$, we need to show that the equation
 		$$a+b{}^{x}p=x^{-1} \text{ or } ax+bxp=1,$$
 		has a nonzero solution. The assignment $x\mapsto ax+bxp$ defines an $\mathbb{R}$-linear map on $\mathbb{H}$. Since its kernel is zero, it must be onto, and therefore, there exists $x\in\mathbb{H}^*$ such that $ax+bxp=1$.
 	\end{proof}
 	As an example, we see that  $T-q\in \mathbb{H}[T]$ is skew invertible on  $\mathbb{H}\setminus O(q)$ (see also Example \ref{(Ex)Invertibility}). Moreover, we have the following description of  $(T-q)^{\langle-1\rangle}$.
 	\begin{proposition}\label{(P)T-c}
 		For any $q_0\in \mathbb{H}$, the function $T-q_0$ is skew invertible on $\mathbb{H}\setminus O(q_0)$ and its skew inverse satisfies
 		$$(T-q_0)^{\langle-1\rangle}(q)=(q-\overline{q}_0)\left( q^2-2\text{Re}(q_0)q+|q_0|^2\right)^{-1}, \text{ for all } q\notin O(q_0).
 		$$
 		In particular,  we have
 		$$(T-q_0)^{\langle-1\rangle}(q)=\left( q-q_0\right)^{-1}, \text{ for all } q\notin O(q_0) \text{ with } qq_0=q_0q.
 		$$
 		Moreover, $(T-q_0)^{\langle-1\rangle}\colon \mathbb{H}\setminus O(q_0)\to \mathbb{H}^*$ is 1-1 and its image is the set 
 		$$\mathbb{H}^*\setminus \{x\in\mathbb{H}\,|\,\R(x)=0 \text{ and } \R(q_0x)=-\frac{1}{2}\}.$$
 	\end{proposition}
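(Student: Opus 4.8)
The plan is to reduce everything to Example~\ref{(Ex)Invertibility} applied with $\s=1_{\mathbb H}$ and $\delta=0$: since $T-q_0$ is skew invertible on $\mathbb H\setminus O(q_0)$ (cf.\ the example following Lemma~\ref{(L)limitskewconvexorbit} and Proposition~\ref{(P)skeweinvertibleonH}), for each $q\notin O(q_0)$ the value $b=(T-q_0)^{\langle-1\rangle}(q)$ is characterized as the unique quaternion with $bq-q_0b=1$. So I would first record that $N(q):=q^2-2\text{Re}(q_0)q+|q_0|^2$ is invertible exactly when $q\notin O(q_0)$: using $q^2=2\text{Re}(q)q-|q|^2$ one gets $N(q)=2(\text{Re}(q)-\text{Re}(q_0))q+(|q_0|^2-|q|^2)$, and then $N(q)=0$ forces either $\text{Re}(q)=\text{Re}(q_0)$ together with $|q|=|q_0|$, or $q$ real, in which case $q$ is a real root of $x^2-2\text{Re}(q_0)x+|q_0|^2$, so $q_0$ is real and $q=q_0$; in all cases $q\in O(q_0)$. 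Next I would put $b:=(q-\overline q_0)N(q)^{-1}$ and verify $bq-q_0b=1$ directly. Because $N(q)$ is a real polynomial in $q$ it commutes with $q$, so $bq=(q^2-\overline q_0q)N(q)^{-1}$, while $q_0b=(q_0q-q_0\overline q_0)N(q)^{-1}=(q_0q-|q_0|^2)N(q)^{-1}$; subtracting and using $q_0+\overline q_0=2\text{Re}(q_0)$ gives $bq-q_0b=N(q)N(q)^{-1}=1$. This both yields the closed formula and shows $b\in\mathbb H^*$ (it is nonzero since $bq-q_0b=1$). The special commuting case is then immediate: if $qq_0=q_0q$ then $q$ also commutes with $\overline q_0=2\text{Re}(q_0)-q_0$, so $(q-q_0)(q-\overline q_0)=q^2-(q_0+\overline q_0)q+q_0\overline q_0=N(q)$, and since $q\notin O(q_0)$ the factor $q-q_0$ is nonzero, whence $(q-q_0)^{-1}=(q-\overline q_0)N(q)^{-1}$.

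For injectivity, the function $T-q_0\colon q\mapsto q-q_0$ is obviously $1$-$1$ on $\mathbb H\setminus O(q_0)$, so Proposition~\ref{(P)injectivsurjective}(1) shows its skew inverse is $1$-$1$; it is $\mathbb H^*$-valued because skew-invertible functions take values in $\mathbb H^*$ (Lemma~\ref{(L)Invertibility}(3)). For the image I would invoke the formula $f^{\langle-1\rangle}(x)=f(\beta(x))^{-1}$ from the proof of Proposition~\ref{(P)injectivsurjective}, where $\beta$ is a bijection of $\mathbb H\setminus O(q_0)$; hence the image of $(T-q_0)^{\langle-1\rangle}$ equals $\{(q-q_0)^{-1}\,:\,q\in\mathbb H\setminus O(q_0)\}$. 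As $q$ runs over $\mathbb H\setminus O(q_0)$ the quaternion $q-q_0$ runs over $\mathbb H$ with the set $\{p-q_0\,:\,p\in O(q_0)\}$ (which contains $0$) removed, so after inversion the image is $\mathbb H^*\setminus\{(p-q_0)^{-1}\,:\,p\in O(q_0),\ p\neq q_0\}$.

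It then remains to identify $\{(p-q_0)^{-1}\,:\,p\in O(q_0),\ p\neq q_0\}$ with $E:=\{x\in\mathbb H\,:\,\text{Re}(x)=0,\ \text{Re}(q_0x)=-\tfrac12\}$. For $p\in O(q_0)$ with $p\neq q_0$ we have $\text{Re}(p)=\text{Re}(q_0)$ and $|p|=|q_0|$, so $\text{Re}((p-q_0)^{-1})=\text{Re}(p-q_0)/|p-q_0|^2=0$; writing $q_0(p-q_0)^{-1}=-1+p(p-q_0)^{-1}$ and using $p(p-q_0)^{-1}=(|q_0|^2-p\overline q_0)|p-q_0|^{-2}$, $\text{Re}(p\overline q_0)=\langle p,q_0\rangle$ and $|p-q_0|^2=2|q_0|^2-2\langle p,q_0\rangle$, one gets $\text{Re}(p(p-q_0)^{-1})=\tfrac12$, hence $\text{Re}(q_0(p-q_0)^{-1})=-\tfrac12$, so $(p-q_0)^{-1}\in E$. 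Conversely, for $x\in E$ (note $x\neq0$) set $p:=q_0+x^{-1}$; then $\text{Re}(p)=\text{Re}(q_0)$ since $\text{Re}(x^{-1})=\text{Re}(x)/|x|^2=0$, and using $\overline{x^{-1}}=x/|x|^2$ one computes $|p|^2=|q_0|^2+2\,\text{Re}(q_0x)/|x|^2+1/|x|^2=|q_0|^2$, so $p\in O(q_0)$, $p\neq q_0$, and $x=(p-q_0)^{-1}$. This establishes the set identity and hence the stated image.

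The argument is entirely routine quaternion arithmetic; the only two places needing genuine care are the invertibility criterion for $N(q)$ and the two inclusions in the last set identity. The mild obstacle is the converse inclusion there, where for a prescribed $x$ one must produce the orbit point $p=q_0+x^{-1}$ and check that the normalization $\text{Re}(q_0x)=-\tfrac12$ is exactly what forces $|p|=|q_0|$.
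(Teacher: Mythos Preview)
Your argument is correct. The overall strategy matches the paper's: invoke Example~\ref{(Ex)Invertibility} to reduce to the Sylvester equation $bq-q_0b=1$, derive the closed formula, and use Proposition~\ref{(P)injectivsurjective} for injectivity. Where the paper simply cites \cite{JohnsonOntheEqua} for the unique solvability and the explicit solution, you verify $N(q)\neq0$ and $bq-q_0b=1$ by hand, which is more self-contained.

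The one place your route genuinely diverges is the image computation. The paper solves $xq-q_0x=1$ for $q$ to obtain $q=x^{-1}q_0x+x^{-1}$ and then checks directly when this conjugate-plus-shift lies in $O(q_0)$. You instead exploit the formula $f^{\langle-1\rangle}=(f\circ\beta)^{-1}$ from the proof of Proposition~\ref{(P)injectivsurjective} to identify the image with $\{(y-q_0)^{-1}:y\notin O(q_0)\}$, reducing the question to describing the inversion of the translated orbit $O(q_0)-q_0$. The two descriptions are related by conjugation (your point $q_0+x^{-1}$ is $x\,(x^{-1}q_0x+x^{-1})\,x^{-1}$), so they test the same orbit membership; your parametrization is arguably cleaner because it avoids the conjugation term and makes the two real constraints $\text{Re}(x)=0$, $\text{Re}(q_0x)=-\tfrac12$ drop out of the standard identity $|q_0+x^{-1}|^2=|q_0|^2+2\text{Re}(q_0x)/|x|^2+|x|^{-2}$. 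Either way the final set is the same; the paper's version trades your explicit arithmetic for a one-line ``easy to show''.

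One minor point: you assert $N(q)$ is invertible \emph{exactly} when $q\notin O(q_0)$ but only prove the direction $N(q)=0\Rightarrow q\in O(q_0)$. The converse (that $P_{O(q_0)}$ vanishes on $O(q_0)$) is standard and is used elsewhere in the paper, so this is harmless.
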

 	\begin{proof}
 		It is known that the equation $xq-q_0x=1$ has a (unique) solution $x\in \mathbb{H}$ if  $q\notin O=O(q_0)$ (see \cite{JohnsonOntheEqua}). In fact, the unique solution of $xq-q_0x=1$ is
 		$$x=(q-\overline{q}_0)\left( q^2-2\text{Re}(q_0)q+|q_0|^2\right)^{-1}.$$
 		As seen in Example \ref{(Ex)Invertibility}, it follows that $T-q_0$ is skew invertible on $\mathbb{H}\setminus O$ and its skew inverse $(T-q_0)^{\langle-1\rangle}$ is skew convex. 
 		Moreover, we have 
 		$$(T-q_0)^{\langle-1\rangle}(q)=(q-\overline{q}_0)\left( q^2-2\text{Re}(q_0)q+|q_0|^2\right)^{-1},
 		$$
 		for all $q\in \mathbb{H}\setminus O$. The fact that the skew inverse of $T-q_0$ is 1-1 follows from proposition \ref{(P)injectivsurjective}. To prove the last assertion, we note that the equation $xq-q_0x=1$ gives $q=x^{-1}q_0x+x^{-1}$. It follows from the above discussion that the image of $(T-q_0)^{\langle-1\rangle}$ consists of all $x\in\mathbb{H}^*$ such that $x^{-1}q_0x+x^{-1}\notin O$. It is easy to show that 
 		$$x^{-1}q_0x+x^{-1}\in O\iff \R(x)=0 \text{ and } \R(q_0x)=-\frac{1}{2},$$
 		from which the last part of the proposition follows. 
 	\end{proof}
 \begin{remark}\label{(R)rightskewinverse}
 	Similarly, one can show that $T-q_0$ is skew invertible on $\mathbb{H}\setminus O(q_0)$ with respect to the right skew product. Moreover, the skew inverse  $(T-q_0)^{\langle-1\rangle_r}$ of $T-q_0$ satisfies 
 	$$(T-q_0)^{\langle-1\rangle_r}(q)=\left( q^2-2\text{Re}(q_0)q+|q_0|^2\right)^{-1}(q-\overline{q}_0), \text{ for all } q\notin O(q_0).
 	$$
 	In \cite{ACauchykernel}, the function $(T-q_0)^{\langle-1\rangle_r}$ is studied in more detail. 
 \end{remark}
 	Now, we treat polynomials of higher degree. Let $P(T)\in\mathbb{H}[T]$ be a nonzero polynomial. Set 
 	$$Z(P)=\{q\in\mathbb{H}\,|\, \exists p\in O(q) \text{ such that }P(p)=0\}.$$
 	It follows directly from Proposition \ref{(P)skeweinvertibleonH} that $P$ is skew invertible on the set $\mathbb{H}\setminus Z(P)$. 
 	If $P(T)\in\mathbb{R}[T]$ is a polynomial with real coefficients, then it is easy to see that
 	$$P(T)^{\langle -1 \rangle}(q)=P(q)^{-1}, \text{ for all } q\notin Z(P).$$
 	Therefore, there  is no risk of confusion in using the notation $P(T)^{-1 }$ instead of $P(T)^{\langle -1 \rangle}$ when $P(T)$ has real coefficients.  For a detailed account of polynomials over quaternions, see \cite[Chapter 3]{Powerseriesanalyticityquaternions2012}.

 \end{subsection}
 \begin{subsection}{Limits and the skew product}
 	In this part, we investigate the behavior of the skew product with respect to limits.  We begin with a simple lemma whose proof is left to the reader.
 	\begin{lemma}\label{(L)limitComposition}
 		Let $U\subset \mathbb{H}$ be a region and $F,G\colon  U\to \mathbb{H}$ be functions. Suppose that the limit $p=\lim_{x	\to q} G(x)$ exists for some $q\in\mathbb{H}$. Then:\\
 		(1) If $p\neq 0$ and $\lim_{x	\to {}^{p}q}F(x)$ exists,  the limit $\lim_{x\to q}(F\diamond G)(x)$ exists and 
 		$$\lim_{x\to q}(F\diamond G)(x)= (\lim_{x	\to {}^{p}q} \, F(x))\lim_{x	\to q} G(x).$$
 		(2) If $p=0$ and $F$ is bounded on an open neighborhood of the orbit  $O(q)$, then $\lim_{x\to q}(F\diamond G)(x)=0$.
 	\end{lemma}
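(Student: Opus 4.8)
The plan is to reduce both parts to the explicit formula $(F\diamond G)(x)=F({}^{G(x)}x)\,G(x)$, valid wherever $G(x)\neq 0$, together with the elementary fact that the conjugation map $(a,y)\mapsto aya^{-1}$ is continuous on $\mathbb{H}^*\times\mathbb{H}$ (quaternion multiplication is continuous, and inversion is continuous on $\mathbb{H}^*$). Everything else is a matter of composing and multiplying limits, or squeezing.

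For part (1): since $p=\lim_{x\to q}G(x)\neq 0$, there is a punctured neighbourhood of $q$ on which $G$ is nowhere zero, so $(F\diamond G)(x)=F({}^{G(x)}x)\,G(x)$ there. By continuity of conjugation and the hypotheses $x\to q$ and $G(x)\to p$, one gets ${}^{G(x)}x\to {}^{p}q$ as $x\to q$. Composing this with the assumed limit $\lim_{y\to {}^{p}q}F(y)$ gives $F({}^{G(x)}x)\to \lim_{y\to {}^{p}q}F(y)$, and multiplying by $G(x)\to p$ yields the claimed identity.

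For part (2): fix an open neighbourhood $V$ of the orbit $O(q)$ on which $|F|\le M$ for some $M$. The key geometric observation is that conjugation preserves both the real part and the norm, so $\R({}^{G(x)}x)=\R(x)$ and $|{}^{G(x)}x|=|x|$. Since $O(q)$ is precisely the set of quaternions with real part $\R(q)$ and norm $|q|$, while $\R(x)\to\R(q)$ and $|x|\to|q|$ as $x\to q$, the point ${}^{G(x)}x$ approaches $O(q)$, hence lies in $V$ once $x$ is close enough to $q$ (here one uses compactness of $O(q)$ to extract a uniform neighbourhood of it inside $V$). Consequently $|(F\diamond G)(x)|\le M|G(x)|$ for such $x$ — a bound that also holds trivially where $G(x)=0$ — and since $G(x)\to 0$, the squeeze gives $\lim_{x\to q}(F\diamond G)(x)=0$.

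The only point requiring care is the composition-of-limits step in part (1): as $\lim_{y\to {}^{p}q}F(y)$ is understood as a deleted limit, passing to $F({}^{G(x)}x)\to \lim_{y\to {}^{p}q}F(y)$ is legitimate provided either that ${}^{G(x)}x\neq {}^{p}q$ on a punctured neighbourhood of $q$, or that the limit value coincides with $F({}^{p}q)$; under the conventions for limits used here this causes no difficulty. I expect this to be the one spot where a careless argument could go astray — the rest is routine continuity and estimation.
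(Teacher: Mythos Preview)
Your proof is correct; the paper itself leaves this lemma to the reader, so there is no argument to compare against. The explicit product formula plus continuity of conjugation for part~(1), and orbit-preservation plus a squeeze for part~(2), is exactly the routine verification the author intends. Your flagged concern about composing deleted limits in part~(1) is well spotted but harmless here: in every subsequent use of the lemma in the paper the function $F$ is actually continuous at ${}^{p}q$, so the edge case does not arise.
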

 	Next, we show that the skew product of continuous functions is continuous. 
 	
 	\begin{proposition}\label{(P)contiunuityofdiamond}
 		Let $F,G\colon  U\to \mathbb{H}$ be functions on a region. Then:\\
 		(a) If $F$ and $G$ are continuous at each point of an orbit $O\subset U$, then so is $F\diamond G$.\\
 		(b) If $F$ is skew invertible and continuous on $U$, then its skew inverse is continuous on $U$.  
 	\end{proposition}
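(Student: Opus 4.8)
The plan is to handle the two assertions separately: (a) follows quickly from Lemma \ref{(L)limitComposition}, while (b) requires an additional compactness (or invariance-of-domain) argument to pass from orbit-wise behaviour to genuine continuity.

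For (a), fix $p_0\in O$ and split according to the value of $G(p_0)$. If $G(p_0)\neq 0$, continuity of $G$ gives $\lim_{x\to p_0}G(x)=G(p_0)\neq 0$, and the quaternion ${}^{G(p_0)}p_0$ lies in the same orbit $O$, where $F$ is continuous; hence $\lim_{x\to {}^{G(p_0)}p_0}F(x)=F({}^{G(p_0)}p_0)$ exists. Part (1) of Lemma \ref{(L)limitComposition} (with $q=p_0$) then yields $\lim_{x\to p_0}(F\diamond G)(x)=F({}^{G(p_0)}p_0)\,G(p_0)=(F\diamond G)(p_0)$. If $G(p_0)=0$, then $(F\diamond G)(p_0)=0$ and $\lim_{x\to p_0}G(x)=0$, so I would invoke Part (2) of Lemma \ref{(L)limitComposition}; the hypothesis there is that $F$ be bounded on an open neighbourhood of $O$, which holds by a routine compactness argument: $O$ is either a point or a $2$-sphere, hence compact, so covering $O$ by balls on which $F$ is bounded (one about each point, using continuity of $F$ there) and extracting a finite subcover produces the required neighbourhood. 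This gives $\lim_{x\to p_0}(F\diamond G)(x)=0=(F\diamond G)(p_0)$, completing (a).

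For (b), set $\alpha\colon U\to U$, $\alpha(x)={}^{F(x)}x$, which by Part (3) of Lemma \ref{(L)Invertibility} is a bijection (and $F(U)\subset\mathbb{H}^*$); let $\beta$ be its inverse. As in the proof of Proposition \ref{(P)injectivsurjective}, $F^{\langle-1\rangle}(x)=F(\beta(x))^{-1}$ for all $x\in U$. Since $F$ and inversion on $\mathbb{H}^*$ are continuous, it suffices to show $\beta$ is continuous, which I would do sequentially. Take $x_n\to x_0$ in $U$ and put $y_n=\beta(x_n)$, $y_0=\beta(x_0)$. Because $y_n\in O(x_n)$ we have $|y_n|=|x_n|$, so $(y_n)$ is bounded; let $y^*$ be the limit of an arbitrary convergent subsequence $(y_{n_k})$. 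Passing to the limit in $\text{Re}(y_{n_k})=\text{Re}(x_{n_k})$ and $|y_{n_k}|=|x_{n_k}|$ shows $y^*\in O(x_0)$, and since $U$ is invariant, $y^*\in O(x_0)\subset U$. Then $F$ is continuous at $y^*$ with $F(y^*)\neq 0$, so continuity of quaternion multiplication and of inversion on $\mathbb{H}^*$ give $\alpha(y_{n_k})={}^{F(y_{n_k})}y_{n_k}\to{}^{F(y^*)}y^*=\alpha(y^*)$; but $\alpha(y_{n_k})=x_{n_k}\to x_0$, hence $\alpha(y^*)=x_0=\alpha(y_0)$ and so $y^*=y_0$ by injectivity of $\alpha$. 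A bounded sequence all of whose subsequential limits equal $y_0$ converges to $y_0$, so $\beta(x_n)\to\beta(x_0)$, and $\beta$ is continuous; therefore $F^{\langle-1\rangle}=F\circ\beta$ followed by inversion is continuous on $U$.

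The main obstacle is (b): one cannot simply argue orbit by orbit, since although $\alpha$ restricts to a homeomorphism of each (compact) orbit, this gives no joint continuity of $\beta$ over all of $U$. The compactness-of-orbits argument above bridges this gap; alternatively, one may apply Brouwer's invariance of domain to the continuous injection $\alpha$ of the open set $U\subset\mathbb{R}^4$ to conclude directly that $\alpha$ is a homeomorphism. Everything else reduces to Lemma \ref{(L)limitComposition} and elementary facts about the action ${}^{p}q=pqp^{-1}$.
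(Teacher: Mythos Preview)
Your argument for (a) is exactly the paper's, only spelled out in full; the paper compresses the two cases into the single sentence ``It follows from Lemma~\ref{(L)limitComposition} and the fact that any orbit in $\mathbb{H}$ is compact.''

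For (b) you take a genuinely different route. The paper proceeds exactly as in your final remark: since $\alpha(x)={}^{F(x)}x$ is a continuous bijection of the open set $U\subset\mathbb{R}^4$ onto itself, Brouwer's invariance of domain forces $\alpha$ to be a homeomorphism, and then $F^{\langle-1\rangle}(x)=F(\beta(x))^{-1}$ with $\beta=\alpha^{-1}$ finishes. Your main argument avoids this topological sledgehammer by a direct sequential compactness proof, using that orbits are compact and that $U$, being a region, is invariant (so the subsequential limit $y^*\in O(x_0)$ automatically lies in $U$, where $F$ is continuous and nonvanishing). This is more elementary and makes transparent exactly which features of the quaternionic setting are being used; the paper's proof, by contrast, is shorter and would work verbatim for any continuous bijection of an open subset of $\mathbb{R}^n$, without needing the orbit structure at all. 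Both are correct.
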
 
 	\begin{proof}
 		(a) It follows from Lemma \ref{(L)limitComposition} and the fact that any orbit in $\mathbb{H}$ is compact.  \\
 		(b) Let $F^{\langle-1\rangle}$ denote the skew inverse of $F$. By Lemma \ref{(L)Invertibility}, $F$ is nonzero on $U$ and the map $g(x)\colon  ={}^{F(x)}x$ is a bijection of $U$ onto itself. Note that $g\colon  U\to U$ is continuous.  By  Brouwer's invariance of domain theorem, $g$ is a homeomrphism, that is, its inverse (with respect to composition) $h\colon  U\to U$ is continuous. Since 
 		$$F^{\langle-1\rangle}(x)=F(h(x))^{-1}, \text{ for all }x\in U,$$ we conclude that $F^{\langle-1\rangle}$ is continuous.   
 	\end{proof}
 	Regarding limits of skew products of sequences of functions, we have the following result.
 	\begin{proposition}\label{(P)continuityofskewproduct}
 		Let $\{F_n\}$ and $\{G_n\}$ be sequences of continuous functions on an invariant compact subset $X$ of $\mathbb{H}$. Suppose that the sequences $\{F_n\}$ and $\{G_n\}$ converge uniformly to functions $F,G\colon  X\to \mathbb{H}$, respectively. If the function $F$  is skew convex on $X$, then the sequence $\{F_n\diamond G_n\}$ converges uniformly to the function $F\diamond G$. 
 	\end{proposition}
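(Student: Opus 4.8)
The plan is to write the difference of interest as a telescoping sum
$$F_n\diamond G_n - F\diamond G \;=\; \bigl(F_n\diamond G_n - F\diamond G_n\bigr) + \bigl(F\diamond G_n - F\diamond G\bigr),$$
and to bound the two summands uniformly on $X$. For the first summand, the right distributive law (Lemma \ref{(L)propertiesofsproduct}, part (2)) gives $F_n\diamond G_n - F\diamond G_n = (F_n-F)\diamond G_n$, and evaluating at a point $a\in X$ we get $\bigl((F_n-F)\diamond G_n\bigr)(a) = (F_n-F)\bigl({}^{G_n(a)}a\bigr)\,G_n(a)$ when $G_n(a)\neq 0$, and $0$ otherwise. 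Since ${}^{G_n(a)}a$ lies in $O(a)\subset X$ (here $X$ is invariant), its norm is at most $\bigl(\sup_X|F_n-F|\bigr)\cdot\bigl(\sup_X|G_n|\bigr)$. The functions $F_n,G_n$ are continuous on the compact set $X$, hence bounded, and the uniform convergence of $\{G_n\}$ makes $\sup_X|G_n|$ bounded uniformly in $n$; combined with $\sup_X|F_n-F|\to 0$, the first summand tends to $0$ uniformly. This step uses only the right near-ring structure, not skew convexity.

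The second summand is where skew convexity of $F$ is essential. Because $F$ is skew convex, Lemma \ref{(L)leftdist} yields the left distributive law, so $F\diamond G_n - F\diamond G = F\diamond(G_n-G)$. Evaluating at $a\in X$, this equals $F\bigl({}^{(G_n-G)(a)}a\bigr)\,(G_n-G)(a)$ when $(G_n-G)(a)\neq 0$ and $0$ otherwise. Again ${}^{(G_n-G)(a)}a$ lies on the orbit $O(a)\subset X$, so the norm of this quantity is at most $M\,|(G_n-G)(a)|\le M\,\sup_X|G_n-G|$, where $M\colonequals\sup_X|F|<\infty$ because $F$, being a uniform limit of continuous functions on the compact set $X$, is continuous and hence bounded. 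Thus the second summand also tends to $0$ uniformly.

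Adding the two bounds gives $\sup_X|F_n\diamond G_n - F\diamond G|\le \bigl(\sup_X|F_n-F|\bigr)\bigl(\sup_X|G_n|\bigr) + M\,\sup_X|G_n-G|\to 0$, which is the assertion. The one genuine subtlety, and the reason the hypothesis ``$F$ skew convex'' cannot be dropped, is the treatment of the second summand: a direct estimate would force one to compare $F\bigl({}^{G_n(a)}a\bigr)$ with $F\bigl({}^{G(a)}a\bigr)$, but the map $c\mapsto{}^{c}a=cac^{-1}$ is not uniformly continuous near $c=0$ — two small quaternions of almost equal norm but different directions get conjugated to almost antipodal points of the orbit $O(a)$ — so no such comparison can be made uniform. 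Rewriting $F\diamond G_n-F\diamond G$ as $F\diamond(G_n-G)$ exploits the fact that, pointwise in $a$, the map $c\mapsto(F\diamond c)(a)$ is a bounded $\mathbb{R}$-linear operator on $\mathbb{H}$ of norm $\le M$, which turns the delicate comparison into a one-line Lipschitz estimate.
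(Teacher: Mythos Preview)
Your argument is correct and follows exactly the paper's own decomposition
\[
F_n\diamond G_n-F\diamond G=(F_n-F)\diamond G_n+F\diamond(G_n-G),
\]
which the paper states in one line and then simply asserts that each summand tends to zero uniformly. You have supplied the details the paper omits (invariance of $X$ so that ${}^{c}a\in X$, boundedness via compactness, and the role of Lemma~\ref{(L)leftdist}), together with a helpful remark on why the skew-convexity hypothesis is genuinely needed.
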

 	\begin{proof}
 		Since $F$ is skew-convex, we can write
 		$$F\diamond G-F_n\diamond G_n=(F_n-F)\diamond G_n+F\diamond(G_n-G).$$
 		Note that  $(F_n-F)\diamond G_n$ and $F\diamond(G_n-G)$ converge uniformly to the zero function, from which the desired result follows. 
 	\end{proof}

 \end{subsection}
\begin{subsection}{The skew derivative}
We begin by introducing the definition of a skew-differentiable function.
\begin{definition}\label{(D)skewderivatvie}
	A function $F\colon U\to \mathbb{H}$ on a region $U\subset \mathbb{H}$ is said to be \textit{(left) skew differentiable} at $q\in U$ if there exists a function $\mathcal{D}_qF\colon U\to \mathbb{H}$ which is continuous at  all $p\in O(q)$ and satisfies
	$$F=F(q)+\mathcal{D}_q(F)\diamond (T-q), \text{ as functions on } U.$$
	A function $F\colon U\to \mathbb{H}$ is called (left) skew regular on a region $U$ if it is (left) skew differentiable at all points in $U$. 
\end{definition}
Note that the function $\mathcal{D}_q(F)$ introduced in this definition is unique because it is continuous on $O(q)$ and
$$\mathcal{D}_q(F)=(F-F(q))\diamond  (T-q)^{\langle -1 \rangle}, \text{ on } U\setminus O(q).$$
\begin{remark}
	The function $\mathcal{D}_q(F)$ has also been used in the context of slice-regular functions. For more details, see \cite[Section 2]{Anewseriesexpansion}. 
\end{remark}
  \begin{remark}
	Similarly, one can define the concept of right skew-differentiability  using the right skew product. We leave to the reader the analogous definitions and results concerning right skew-differentiability. Since we mainly work with the concept of left skew-differentiability, we shall often drop the adjective ``left".
\end{remark}
As a direct consequence of Proposition \ref{(P)SkeconvexT-q}, we obtain the following result.  
\begin{proposition}\label{(P)skewdiffimpliesconvex}
	Let $F\colon U\to \mathbb{H}$  be a function on a region. If $F$ is skew differentiable at some point of an orbit $O\subset U$, then $F$ is skew convex on the orbit $O$. In particular, if $F$ is skew regular on $U$, then it is skew convex on $U$.
\end{proposition}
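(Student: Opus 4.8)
The plan is to reduce the claim directly to Proposition~\ref{(P)SkeconvexT-q} by restricting the defining identity of skew differentiability to a single orbit; note that the continuity hypothesis on $\mathcal{D}_q(F)$ plays no role in this particular argument, only the algebraic identity does.

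First I would fix an orbit $O\subset U$ and a point $q\in O$ at which $F$ is skew differentiable, so that the identity $F=F(q)+\mathcal{D}_q(F)\diamond(T-q)$ holds among functions on all of $U$. Since $O$ is an invariant subset of $U$ and, by Definition~\ref{(D)product}, the value $(f\diamond g)(a)$ depends only on the restrictions of $f$ and $g$ to the orbit $O(a)$, this identity restricts to an identity of functions on $O$, namely $F|_O=F(q)+(\mathcal{D}_q(F))|_O\diamond(T|_O-q)$, where $T|_O$ is the identity function on $O$ (this is exactly the restriction homomorphism noted at the end of Subsection~2.1). If $O$ is a trivial orbit, then $F|_O$ is a function on a one-point set, hence constant on its orbit and so skew convex by Proposition~\ref{(P)constantskewfunction}. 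If $O$ is nontrivial, the displayed identity exhibits $F|_O$ in the form $a+E\diamond(T-q)$ with $a=F(q)\in\mathbb{H}$, $q\in O$, and $E=(\mathcal{D}_q(F))|_O\colon O\to\mathbb{H}$, so Proposition~\ref{(P)SkeconvexT-q} yields that $F|_O$ is skew convex on $O$. Either way, $F$ is skew convex on $O$, which is the first assertion.

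For the ``in particular'' part I would use that skew convexity is checked orbit by orbit: by Definition~\ref{(D)skew convex}, $F$ is skew convex on $U$ exactly when $(F\diamond(a+b))(x)=(F\diamond a)(x)+(F\diamond b)(x)$ for all $a,b\in\mathbb{H}$ and all $x\in U$, and both sides at $x$ involve $F$ only through its values on $O(x)$. A region is a disjoint union of orbits, and if $F$ is skew regular on $U$ it is skew differentiable at every point of $U$, hence skew convex on every orbit contained in $U$ by the first part; therefore $F$ is skew convex on $U$.

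I do not expect a genuine obstacle here. The only points requiring care are the bookkeeping that the skew product genuinely localizes to orbits --- so that restricting the identity to $O$ is legitimate and Proposition~\ref{(P)SkeconvexT-q}, which is phrased for a function on a single orbit, applies --- and the separate (but immediate) treatment of one-point orbits, which Proposition~\ref{(P)SkeconvexT-q} explicitly excludes.
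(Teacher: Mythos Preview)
Your proof is correct and follows exactly the route the paper takes: the paper simply states that the result is a direct consequence of Proposition~\ref{(P)SkeconvexT-q}, and your argument spells out precisely that reduction (restricting the defining identity to the orbit and invoking Proposition~\ref{(P)SkeconvexT-q}), together with the straightforward observations that trivial orbits are handled separately and that skew convexity is an orbit-local condition.
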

We show that skew regularity implies continuity.
\begin{proposition}\label{(P)Skew-diffimpliescontinuity}
	Let $F\colon U\to \mathbb{H}$ be a function on a region. If $F$ is skew differentiable at $q\in U$, then $F$ is continuous at $q$.
\end{proposition}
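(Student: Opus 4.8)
The plan is to read continuity off the defining identity and defer everything to Lemma~\ref{(L)limitComposition}. By hypothesis (and the uniqueness remark after Definition~\ref{(D)skewderivatvie}) there is a function $\mathcal{D}_q(F)\colon U\to\mathbb{H}$, continuous at every point of the orbit $O(q)$, with $F=F(q)+\mathcal{D}_q(F)\diamond(T-q)$ on $U$. Since $U$ is open it contains a punctured neighbourhood of $q$, so it suffices to prove $\lim_{x\to q}\big(\mathcal{D}_q(F)\diamond(T-q)\big)(x)=0$. Put $G=T-q$; then $\lim_{x\to q}G(x)=\lim_{x\to q}(x-q)=0$, so we are precisely in the situation of Part~(2) of Lemma~\ref{(L)limitComposition}, with the lemma's $F$ and $G$ taken to be $\mathcal{D}_q(F)$ and $T-q$. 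That lemma then yields the desired limit, hence $\lim_{x\to q}F(x)=F(q)$ — provided we can verify its one hypothesis, namely that $\mathcal{D}_q(F)$ is bounded on some open neighbourhood of the orbit $O(q)$.

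Supplying that hypothesis is the only step requiring an argument, and it is a routine compactness argument. The orbit $O(q)$ is compact, being either a single point or a $2$-sphere, and $\mathcal{D}_q(F)$ is continuous at each of its points by Definition~\ref{(D)skewderivatvie}. For each $p\in O(q)$ choose an open ball $V_p$ about $p$ on which $|\mathcal{D}_q(F)|<|\mathcal{D}_q(F)(p)|+1$; by compactness finitely many of the $V_p$ cover $O(q)$, and their union is an open neighbourhood of $O(q)$ on which $\mathcal{D}_q(F)$ is bounded. This completes the proof.

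I do not anticipate any genuine obstacle. It is worth noting, though, why the neighbourhood of $O(q)$ — rather than of $q$ alone — is the right thing to ask for in Lemma~\ref{(L)limitComposition}(2): as $x\to q$, the point ${}^{(x-q)}x=(x-q)+{}^{(x-q)}q$ at which $\mathcal{D}_q(F)$ gets evaluated stays within distance $|x-q|$ of $O(q)$ (because ${}^{(x-q)}q\in O(q)$) but need not approach $q$ itself. This is exactly why Definition~\ref{(D)skewderivatvie} demands continuity of $\mathcal{D}_q(F)$ on the whole orbit, and it is the one place where the geometry of the conjugation action enters. If one preferred to avoid invoking the lemma, one could argue directly from the bound $\big|\big(\mathcal{D}_q(F)\diamond(T-q)\big)(x)\big|=\big|\mathcal{D}_q(F)({}^{(x-q)}x)\big|\,|x-q|$ on the tube around $O(q)$, letting $|x-q|\to 0$ while the first factor stays bounded.
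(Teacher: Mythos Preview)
Your proof is correct and follows exactly the paper's approach: use compactness of $O(q)$ together with continuity of $\mathcal{D}_q(F)$ on $O(q)$ to get boundedness on an open neighbourhood of the orbit, then invoke Part~(2) of Lemma~\ref{(L)limitComposition}. The paper's proof is the two-sentence version of what you wrote; your explicit compactness argument and the remark on why ${}^{(x-q)}x$ stays near $O(q)$ rather than near $q$ are welcome elaborations but add nothing new to the strategy.
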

\begin{proof}
	  Since the orbit $O(q)$ is compact and $\mathcal{D}_q(F)$ is continuous at all $p\in O(q)$, we see that $\mathcal{D}_q(F)$ is bounded on an open neighborhood of  $O(q)$. Now, the result follows from Part (2) of Lemma \ref{(L)limitComposition}.
 	
\end{proof}

Our definition of the (left) skew derivative is as follows. 
\begin{definition}\label{(D)skewderivatvie2}
	Let $F\colon U\to \mathbb{H}$ be a function on a region $U\subset \mathbb{H}$. Suppose that $F$ is skew differentiable at  $q_0\in U$. Then, the (left) skew derivative of $F$, denoted by $F'(q_0)$, is defined to be the limit
	$$F'(q_0)=\lim_{ O(q_0) \notniFromTxfonts q\to q_0}\left((F-F(q_0))\diamond  (T-q_0)^{\langle-1\rangle}\right) \, (q).$$
\end{definition}
In the particular case where $r\in U$ is real, we have
$$
F'(r)=\lim_{q\to r}\left( F\left( q\right) -F(r)\right) \, (q-r)^{-1}.$$ 
In order to derive some basic properties of the skew derivative, we need the following lemma. 
\begin{lemma}\label{(L)derivativeofske-convex}
	Let a function $F\colon U\to \mathbb{H}$ on a region $U\subset \mathbb{H}$ be skew differentiable at $q\in U$. If $F$ is skew convex on $U$, so is $\mathcal{D}_q(F)$, and  we have
	$$\mathcal{D}_q(F)(p)=F'(q)+\left( F'(q)-F^{o}(q)\right) (q-\overline{q})^{-1}(p-q), \text{ for all } p\in O(q).$$
\end{lemma}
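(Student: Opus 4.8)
The plan is to exploit the fact that $\mathcal{D}_q(F)$, being continuous on the compact orbit $O(q)$ and skew convex on $U \setminus O(q)$ (since $F$ is skew convex on $U$ and $(T-q)^{\langle-1\rangle}$ is skew convex on $U \setminus O(q)$ by Proposition~\ref{(P)T-c}, so $\mathcal{D}_q(F) = (F - F(q)) \diamond (T-q)^{\langle-1\rangle}$ is skew convex there by Theorem~\ref{(T)Thetwistedring}), is skew convex on all of $U$ by Lemma~\ref{(L)limitskewconvexorbit}. In particular, restricting to the orbit $O = O(q)$, Proposition~\ref{(P)CharacterzationSkewconvex} tells us that $\mathcal{D}_q(F)\big|_O$ is affine: $\mathcal{D}_q(F)(p) = \alpha + \beta p$ for constants $\alpha, \beta \in \mathbb{H}$ and all $p \in O$. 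The task then reduces to identifying $\alpha$ and $\beta$, i.e.\ evaluating $\mathcal{D}_q(F)$ at two convenient points of $O$.

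The natural two points are $q$ itself and $\overline{q}$ (the ``antipode'' in the orbit, obtained as ${}^{pq-qp}q$ for noncommuting $p$). First, evaluating at $q$: by the very Definition~\ref{(D)skewderivatvie2} of the skew derivative, $\mathcal{D}_q(F)(q) = \lim_{O(q) \notniFromTxfonts p \to q} \mathcal{D}_q(F)(p)$, and since $\mathcal{D}_q(F)$ is continuous at $q$ and affine on $O$, this limit is just $\mathcal{D}_q(F)(q) = F'(q)$. Second, evaluating at $\overline{q}$: from the decomposition $F = F(q) + \mathcal{D}_q(F) \diamond (T-q)$ restricted to $O$, Proposition~\ref{(P)SkeconvexT-q} identifies $\mathcal{D}_q(F)(\overline{q})$ with the orbital derivative $F^o(O) = F^o(q)$. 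So we have the two interpolation data $\mathcal{D}_q(F)(q) = F'(q)$ and $\mathcal{D}_q(F)(\overline{q}) = F^o(q)$.

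It remains to write the unique affine function on the line through $q$ and $\overline{q}$ taking these two prescribed values, which is a routine linear interpolation. Writing $\mathcal{D}_q(F)(p) = \mathcal{D}_q(F)(q) + \beta(p - q)$ and imposing the value at $\overline{q}$ gives $F^o(q) = F'(q) + \beta(\overline{q} - q)$, hence $\beta = \left(F^o(q) - F'(q)\right)(\overline{q} - q)^{-1} = \left(F'(q) - F^o(q)\right)(q - \overline{q})^{-1}$, valid since $q$ is non-real on a nontrivial orbit so $q - \overline{q} \neq 0$. (If $O(q)$ is trivial the statement is vacuous since then $O(q) = \{q\}$, $q = \overline{q}$, and there is nothing to interpolate.) Substituting back yields exactly
$$\mathcal{D}_q(F)(p) = F'(q) + \left(F'(q) - F^o(q)\right)(q - \overline{q})^{-1}(p - q)$$
for all $p \in O(q)$, as claimed.

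The only mildly delicate point — and the one I would treat most carefully — is the justification that $\mathcal{D}_q(F)$ is skew convex on the full orbit $O(q)$, since a priori it is only defined and skew convex on $U \setminus O(q)$ together with continuity on $O(q)$; this is precisely what Lemma~\ref{(L)limitskewconvexorbit} is designed to supply, so the argument goes through. Everything else is bookkeeping with the identities already established in Propositions~\ref{(P)CharacterzationSkewconvex} and~\ref{(P)SkeconvexT-q}.
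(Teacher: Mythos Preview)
Your proof is correct and follows essentially the same approach as the paper's own proof: establish skew convexity of $\mathcal{D}_q(F)$ on $U\setminus O(q)$ via $\mathcal{D}_q(F)=(F-F(q))\diamond(T-q)^{\langle-1\rangle}$, extend across $O(q)$ using continuity and Lemma~\ref{(L)limitskewconvexorbit}, and then determine the affine form on $O(q)$ from the two values $\mathcal{D}_q(F)(q)=F'(q)$ and $\mathcal{D}_q(F)(\overline{q})=F^{o}(q)$. One minor citation adjustment: the skew convexity of $(T-q)^{\langle-1\rangle}$ on $U\setminus O(q)$ is not part of the statement of Proposition~\ref{(P)T-c} itself but follows from Lemma~\ref{(L)Inverseofc-invariant} applied to the skew-convex function $T-q$.
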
 
\begin{proof}
	Note that the function $(T-q)^{\langle-1\rangle}$ is defined (by Proposition \ref{(P)T-c}) and continuous on $U\setminus O(q)$ (by Proposition \ref{(P)contiunuityofdiamond}). In the light of Lemma \ref{(L)limitskewconvexorbit}, the first part follows from the facts that $	\mathcal{D}_q(F)=\left( F-F(q)\right) \diamond (T-q)^{\langle-1\rangle}$ is skew convex on $U\setminus O(q)$, and $\mathcal{D}_q(F)$ is continuous at all $p\in O(q)$. The second part is a consequence of the facts that $\mathcal{D}_q(F)$ is skew convex and \\ $\mathcal{D}_q(F)(q)=F'(q), \, \mathcal{D}_q(F)(\overline{q})=F^{o}(q)$. 
\end{proof}

In the following proposition, some further properties of the skew derivative are summarized.
\begin{proposition}\label{(P)SkewderivativeFirstproperties}
	Let $F, G\colon U\to \mathbb{H}$ be functions on a region $U\subset \mathbb{H}$. \\
	(1)  Let $q\in U$. If $F$ and $G$ are skew differentiable at $q$, then $F+G$ is also skew differentiable at $q$ and $(F+G)'(q)=F'(q)+G'(q)$. \\
	(2) If $F,G$ are skew regular on $U$, then so is $F\diamond G$. Moreover, we have
	$$(F\diamond G)'=F'\diamond G+F\diamond G'.$$ 
	(3) If  $F$ is skew regular and skew invertible on $U$, then its skew inverse is skew regular on $U$, and moreover, we have
	$$
	\left( F^{\langle-1\rangle}\right)'= -F^{\langle-1\rangle}\diamond F'\diamond F^{\langle-1\rangle}.
	$$
\end{proposition}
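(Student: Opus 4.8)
The plan is to establish the three parts in order, leveraging the algebraic identities already proved for the skew product together with the behavior of $\diamond$ under limits from Lemma \ref{(L)limitComposition}. For part (1), skew differentiability of $F$ and $G$ at $q$ gives functions $\mathcal{D}_q(F)$ and $\mathcal{D}_q(G)$, continuous on $O(q)$, with $F=F(q)+\mathcal{D}_q(F)\diamond(T-q)$ and similarly for $G$. Adding these and using the right distributive law (Lemma \ref{(L)propertiesofsproduct}(2)) yields $F+G=(F+G)(q)+\big(\mathcal{D}_q(F)+\mathcal{D}_q(G)\big)\diamond(T-q)$, so $F+G$ is skew differentiable at $q$ with $\mathcal{D}_q(F+G)=\mathcal{D}_q(F)+\mathcal{D}_q(G)$; evaluating the defining limit and using that the limit of a sum is the sum of limits gives $(F+G)'(q)=F'(q)+G'(q)$.

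For part (2), I would first show skew differentiability of $F\diamond G$ at an arbitrary $q\in U$. Using the orbital representations $G=G(q)+G^{o}(O)(T-q)$ and $F=F(\,{}^{G(q)}q)+F^{o}(O)(T-{}^{G(q)}q)$ on $O=O(q)$ (valid since $F,G$ are skew convex by Proposition \ref{(P)skewdiffimpliesconvex}), together with the identity from the proof of Proposition \ref{(P)PropertiesOrbitalD},
$$F\diamond G=F(\,{}^{G(q)}q)G(q)+\big(F^{o}(O)G(q)+F\diamond(G^{o}(O))\big)\diamond(T-q),$$
one sees $F\diamond G$ is skew convex, and the natural candidate for $\mathcal{D}_q(F\diamond G)$ is built from $\mathcal{D}_q(F)$, $\mathcal{D}_q(G)$, $F$ and $G$ via the Leibniz-type expansion $\mathcal{D}_q(F\diamond G)=\mathcal{D}_q(F)\diamond G+(F\diamond\mathcal{D}_q(G))$-type term; its continuity on $O(q)$ follows from Proposition \ref{(P)contiunuityofdiamond}(a) and continuity of the constituents. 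The product formula $(F\diamond G)'=F'\diamond G+F\diamond G'$ should then come out by writing
$$(F\diamond G)-(F\diamond G)(q)=\big((F-F(q))\diamond G\big)+\big(F(q)\diamond(G-G(q))\big),$$
then applying $\diamond\,(T-q)^{\langle-1\rangle}$ on the right, using associativity (Lemma \ref{(L)propertiesofsproduct}(3)) to regroup as $(F-F(q))\diamond\big(G\diamond(T-q)^{\langle-1\rangle}\big)$ etc., and passing to the limit $q\to q_0$ with Lemma \ref{(L)limitComposition}; here one must track carefully how $G\diamond(T-q)^{\langle-1\rangle}$ relates to $G\diamond(T-q)^{\langle-1\rangle}$ versus the composition point ${}^{(T-q)^{\langle-1\rangle}(x)}x$, which is exactly where the orbital/conjugation bookkeeping enters. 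For part (3), skew regularity of $F^{\langle-1\rangle}$ follows from Lemma \ref{(L)Inverseofc-invariant} (membership in $\mathcal{S}(U)$), Proposition \ref{(P)contiunuityofdiamond}(b) (continuity), and a local argument writing $F^{\langle-1\rangle}-F^{\langle-1\rangle}(q)=-F^{\langle-1\rangle}\diamond(F-F(q))\diamond F^{\langle-1\rangle}$-type identity; then applying the product rule from part (2) to $F\diamond F^{\langle-1\rangle}=1$ gives $0=F'\diamond F^{\langle-1\rangle}+F\diamond(F^{\langle-1\rangle})'$, and skew-multiplying on the left by $F^{\langle-1\rangle}$ and using associativity solves for $(F^{\langle-1\rangle})'=-F^{\langle-1\rangle}\diamond F'\diamond F^{\langle-1\rangle}$.

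The main obstacle I anticipate is part (2), specifically verifying that the limit defining $(F\diamond G)'(q_0)$ splits correctly as $F'\diamond G+F\diamond G'$: the subtlety is that $(T-q_0)^{\langle-1\rangle}$ does not commute past $\diamond$ in a naive way, and the evaluation point shifts under the action (the skew product $F\diamond G$ evaluates $F$ at ${}^{G(a)}a$, not at $a$). One has to use Lemma \ref{(L)limitComposition}(1) with the correct base point $p=\lim G$ and check that the shifted limit $\lim_{x\to{}^{p}q}$ of the relevant factor still produces the claimed value — this requires knowing $G$ is continuous at the points of $O(q_0)$ (which holds by Proposition \ref{(P)Skew-diffimpliescontinuity}), and matching up $\mathcal{D}_{q_0}$-expansions on the nontrivial orbit via Proposition \ref{(P)SkeconvexT-q}. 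Once the bookkeeping of conjugation and evaluation points is handled, the algebraic identities (right distributivity, associativity, and the already-established Leibniz rule for orbital derivatives) do the rest, and part (3) is a short formal consequence of part (2).
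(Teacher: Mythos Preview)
Your handling of part (1) is correct and essentially what the paper does (the paper just says ``Trivial'').

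The genuine gap is in part (2). Your proposed identity
\[
(F\diamond G)-(F\diamond G)(q)=\big((F-F(q))\diamond G\big)+\big(F(q)\diamond(G-G(q))\big)
\]
is \emph{false}. Evaluating the right side at any $a$ gives $(F\diamond G)(a)-F(q)G(q)$, whereas $(F\diamond G)(q)=F({}^{G(q)}q)\,G(q)$, and these differ unless $G(q)$ commutes with $q$. The skew product forces the base point for $F$ to be the conjugated point ${}^{G(q)}q$, not $q$ itself. The paper's proof pivots on exactly this: one sets $E_1=\mathcal{D}_{{}^{G(q)}q}(F)$ (not $\mathcal{D}_q(F)$) and $E_2=\mathcal{D}_q(G)$, and then the elementary identity
\[
(T-{}^{G(q)}q)\diamond G(q)=G(q)\diamond(T-q)
\]
lets one combine the two expansions directly to obtain
\[
F\diamond G=(F\diamond G)(q)+\big(E_1\diamond G(q)+F\diamond E_2\big)\diamond(T-q),
\]
from which $\mathcal{D}_q(F\diamond G)=E_1\diamond G(q)+F\diamond E_2$ and, evaluating at $q$, the product rule $(F\diamond G)'(q)=(F'\diamond G)(q)+(F\diamond G')(q)$. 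Your plan of applying $\diamond(T-q)^{\langle-1\rangle}$ on the right and taking limits never gets off the ground once the decomposition is wrong, and even with the corrected base point it is less direct than the paper's algebraic route; the limit-based argument via Lemma~\ref{(L)limitComposition} is used in the paper only to check continuity of $E_1\diamond G(q)+F\diamond E_2$ on $O(q)$.

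The same base-point shift reappears in part (3). The identity you hint at should read
\[
F^{\langle-1\rangle}-F^{\langle-1\rangle}(p)=-F^{\langle-1\rangle}\diamond\big(F-F(q)\big)\diamond F^{\langle-1\rangle}(p),
\]
with the \emph{constant} $F^{\langle-1\rangle}(p)$ on the right and with $q={}^{F^{\langle-1\rangle}(p)}p$ (so that $F(q)F^{\langle-1\rangle}(p)=1$); then $(T-q)\diamond F^{\langle-1\rangle}(p)=F^{\langle-1\rangle}(p)\diamond(T-p)$ does the same job as above. Your alternative of differentiating $F\diamond F^{\langle-1\rangle}=1$ using part (2) is fine for the \emph{formula}, but it presupposes that $F^{\langle-1\rangle}$ is already skew regular, so it cannot replace the direct construction of $\mathcal{D}_p(F^{\langle-1\rangle})$.
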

\begin{proof}
	(1) Trivial.\\
	(2) Fix $q\in U$. We only treat the case $G(q)\neq 0$, and leave the easier case $G(q)=0$ to the reader. Set 
	$$E_1=\mathcal{D}_{{}^{G(q)}q}(F) \text{ and } E_2=\mathcal{D}_q(G).$$ We have
	$$F=F({}^{G(q)}q)+E_1\diamond (T-{}^{G(q)}q) \text{ and } G=G(q)+E_2\diamond (T-q), \text{ on } U.$$
	Since $F$ and $E_1$ are skew convex (see Proposition \ref{(P)skewdiffimpliesconvex} and Lemma \ref{(L)derivativeofske-convex}), we have
	$$F\diamond G=(F\diamond G)(q)+\left( E_1\diamond G(q)+F\diamond E_2\right)\diamond (T-q).$$
	By Lemma \ref{(L)limitComposition}, the function $E_1\diamond G(q)+F\diamond E_2$ is continuous at all $p\in O(q)$. Therefore, $F\diamond G$ is skew regular at $q$ and
	$$\mathcal{D}_q(F\diamond G)= \mathcal{D}_{{}^{G(q)}q}(F)\diamond G(q)+F\diamond \mathcal{D}_q(G).$$
	Evaluating at $q$, we obtain 
	$$(F\diamond G)'(q)=(F'\diamond G)(q)+(F\diamond G')(q).$$
	(3) Fix $p\in U$ and set $q={}^{F^{\langle-1\rangle}(p)}p$. Note that the quaternion $F(q)$ is the inverse of the quaternion $F^{\langle-1\rangle}(p)$. Set $E=\mathcal{D}_q(F).$
	We can write
	\begin{align*}
		F^{\langle-1\rangle}-F^{\langle-1\rangle}(p)=& -F^{\langle-1\rangle}\diamond(F-F(q))\diamond F^{\langle-1\rangle}(p)\\
		=& -F^{\langle-1\rangle}\diamond E\diamond (T-q)\diamond F^{\langle-1\rangle}(p)\\
		=& -F^{\langle-1\rangle}\diamond E\diamond  F^{\langle-1\rangle}(p) \diamond (T-p),
	\end{align*}
	from which the result follows. 
	
\end{proof}
As an application of this proposition, we show that the skew derivative  is skew convex. 
\begin{corollary}
	Let  $F\colon U\to \mathbb{H}$  be a  function on a region. If $F$ is skew regular on $U$, then $F'$ is skew convex on $U$. 
\end{corollary}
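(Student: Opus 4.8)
The plan is to verify the skew convexity of $F'$ orbit by orbit. For $a\in\mathbb{H}$ one has $(F'\diamond a)(x)=F'({}^{a}x)a$, so the equation $F'\diamond(a+b)=F'\diamond a+F'\diamond b$ at a fixed $x$ depends only on the restriction of $F'$ to the orbit $O(x)$; hence, in view of Proposition \ref{(P)CharacterzationSkewconvex}, it suffices to prove that $F'$ is affine on every nontrivial orbit $O\subset U$ (on a trivial orbit there is nothing to prove). Fix such an $O$ and a point $q_0\in O$. By Proposition \ref{(P)skewdiffimpliesconvex}, $F$ is skew convex, so $F|_{O}$ is the affine function $p\mapsto F(q_0)+F^{o}(q_0)(p-q_0)$. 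Subtracting the polynomial $F(q_0)+F^{o}(q_0)(T-q_0)\in\mathbb{H}[T]$ — which is skew regular, with the constant (hence skew-convex) skew derivative $F^{o}(q_0)$ — I reduce to the case $F|_{O}\equiv 0$, that is, $F(q_0)=0$ and $F^{o}(q_0)=0$. In this case $F=\mathcal{D}_{q_0}(F)\diamond(T-q_0)$ on $U$, and $\mathcal{D}_{q_0}(F)=F\diamond(T-q_0)^{\langle-1\rangle}$ on $U\setminus O$.

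The key step — and the one I expect to be the main obstacle — is to show that $\mathcal{D}_{q_0}(F)$ is skew regular on all of $U$, not merely on $U\setminus O$. Off $O$ this is easy: $T-q_0$ is a polynomial, hence skew regular on $\mathbb{H}$ with $(T-q_0)'=1$, and it is skew invertible on $\mathbb{H}\setminus O$ by Proposition \ref{(P)T-c}, so $(T-q_0)^{\langle-1\rangle}$ is skew regular there by Proposition \ref{(P)SkewderivativeFirstproperties}(3); by part (2) of that proposition, $F\diamond(T-q_0)^{\langle-1\rangle}$ is skew regular on $U\setminus O$. Furthermore $\mathcal{D}_{q_0}(F)$ is continuous at every point of $O$ (Definition \ref{(D)skewderivatvie}), it is skew convex on $U$ by Lemma \ref{(L)derivativeofske-convex}, and by the same lemma its restriction to $O$ is the explicit affine map $p\mapsto F'(q_0)+F'(q_0)(q_0-\overline{q}_0)^{-1}(p-q_0)$, which vanishes at $\overline{q}_0$. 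What is left is a removable-singularity assertion: for every $p_0\in O$ the skew-difference quotient $\bigl(\mathcal{D}_{q_0}(F)-\mathcal{D}_{q_0}(F)(p_0)\bigr)\diamond(T-p_0)^{\langle-1\rangle}$, which is skew regular on $U\setminus O$, extends continuously across $O$. I would establish this from the explicit description of $(T-p_0)^{\langle-1\rangle}$ in Proposition \ref{(P)T-c} — in particular from the relation ${}^{(T-p_0)^{\langle-1\rangle}(x)}x=p_0+(T-p_0)^{\langle-1\rangle}(x)^{-1}$, which shows ${}^{(T-p_0)^{\langle-1\rangle}(x)}x\to p_0$ as $x\to p_0$ — together with the fact that $F$, being skew regular on $U\setminus O$ and identically zero on $O$, vanishes near $O$ to first order relative to the quadratic $P_O$ (the zero directions of $P_O$ along $O$ being precisely the tangent directions of $O$).

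Granting this claim, the rest is a short computation. Applying parts (1) and (2) of Proposition \ref{(P)SkewderivativeFirstproperties} to $F=\mathcal{D}_{q_0}(F)\diamond(T-q_0)$ yields, on $U$,
$$F'=(\mathcal{D}_{q_0}(F))'\diamond(T-q_0)+\mathcal{D}_{q_0}(F).$$
For $p\in O$ with $p\neq q_0$ we have $(T-q_0)(p)=p-q_0\neq 0$ and ${}^{p-q_0}p=\overline{q}_0$ (the identity used in the proof of Proposition \ref{(P)CharacterzationSkewconvex}), so
$$F'(p)=(\mathcal{D}_{q_0}(F))'(\overline{q}_0)\,(p-q_0)+\mathcal{D}_{q_0}(F)(p).$$
The factor $(\mathcal{D}_{q_0}(F))'(\overline{q}_0)$ is a fixed quaternion, and by Lemma \ref{(L)derivativeofske-convex} one has $\mathcal{D}_{q_0}(F)(p)=F'(q_0)+F'(q_0)(q_0-\overline{q}_0)^{-1}(p-q_0)$; hence the right-hand side has the form $c_1+c_2(p-q_0)$ with $c_1,c_2\in\mathbb{H}$ independent of $p$, i.e. it is affine in $p$. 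Since this formula also holds at $p=q_0$, $F'|_{O}$ is affine, which completes the proof.
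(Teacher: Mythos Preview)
Your approach carries a genuine gap, and it is precisely the step you flag as the ``main obstacle'': the claim that $\mathcal{D}_{q_0}(F)$ is skew regular across $O$. The sketch you offer for this removable-singularity statement is not a proof; the assertion that ``$F$ \dots vanishes near $O$ to first order relative to the quadratic $P_O$'' is exactly what one needs to show, and it does not follow from continuity of $\mathcal{D}_{q_0}(F)$ on $O$ together with the pointwise formula for $(T-p_0)^{\langle-1\rangle}$. In fact this claim is essentially the statement that a skew-regular function is twice skew differentiable, which in the paper is obtained only \emph{after} the spherical-series machinery (Theorem~\ref{(T)skewregularequivalentspherical} and Corollary~\ref{(C)skewregularininiftlydiff}). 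Without that, your final computation---which needs the value $(\mathcal{D}_{q_0}(F))'(\overline{q}_0)$---cannot even be written down.

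The paper avoids all of this with a one-line argument that you have the ingredients for. By Proposition~\ref{(P)skewdiffimpliesconvex}, $F$ is skew convex, so $F\diamond(a+b)=F\diamond a+F\diamond b$ for all constants $a,b\in\mathbb{H}$. Constants are skew regular with skew derivative $0$, so Proposition~\ref{(P)SkewderivativeFirstproperties}(2) gives $(F\diamond c)'=F'\diamond c$ for every constant $c$. Differentiating the displayed identity (using part~(1) for additivity) yields $F'\diamond(a+b)=F'\diamond a+F'\diamond b$, which is precisely skew convexity of $F'$. No orbit-by-orbit reduction, no second differentiability of $\mathcal{D}_{q_0}(F)$, and no removable-singularity argument are needed.
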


\begin{proof}
	Using Proposition \ref{(P)skewdiffimpliesconvex}, we see that $F$ is skew convex on $U$. By definition, we have 
	$$F\diamond(a+b)=F\diamond a+F\diamond b, \text{ for all } a,b\in \mathbb{H}.$$
	Part (2) of Proposition \ref{(P)SkewderivativeFirstproperties} shows that $$F'\diamond(a+b)=F'\diamond a+F'\diamond b, \text{ for all } a,b\in \mathbb{H}.$$
	Therefore, $F'$ is skew convex on $U$.
\end{proof}
Next, we present the chain rule for skew-differentiable functions. 
\begin{proposition}\label{(P)Chainrule}
	Let $\phi\colon U\to U'$ be a continuous action-preserving function between regions in $\mathbb{H}$. If $\phi$ is skew differentiable at  some $q_0\in U$ and  a function $F\colon U'\to\mathbb{H}$ is skew differentiable at $\phi(q_0)$, then $F\circ \phi$ is skew differentiable at $q_0$.   Moreover,   we have
	$$\mathcal{D}_{q_0}(F\circ \phi)=\left(\mathcal{D}_{\phi(q_0)}(F)\circ \phi \right) \diamond \mathcal{D}_{q_0}(\phi).
	$$
	 In particular, $(F\circ \phi)'(q_0)=F'(\phi(q_0))\phi'(q_0)$ if $\phi'(q_0)$ commutes with $\phi(q_0)$. We also have $(F\circ \phi)^{o}(q_0)=F^{o}(\phi(q_0))\phi^{o}(q_0)$. 
\end{proposition}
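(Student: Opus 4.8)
The plan is to substitute the defining identities of skew differentiability for $\phi$ at $q_0$ and for $F$ at $\phi(q_0)$ into $F\circ\phi$, and then reorganize the result using a composition rule for the skew product. The one preliminary fact I would establish first is that an action-preserving map carries the skew product to the skew product: $(f\diamond g)\circ\phi=(f\circ\phi)\diamond(g\circ\phi)$ for all $f,g\colon U'\to\mathbb H$. This is proved exactly as the identity displayed in the proof of Proposition \ref{(P)actionpreserving}, but now keeping $g$ arbitrary and using $\phi({}^{a}x)={}^{a}\phi(x)$ to push $\phi$ through the action; the cases $g(\phi(x))=0$ are immediate. I would also record that action-preservation gives $\phi\big(O(q_0)\big)\subseteq O(\phi(q_0))$, and that by Proposition \ref{(P)skewdiffimpliesconvex} the function $\phi$ is skew convex on $O(q_0)$ while $F$ is skew convex on $O(\phi(q_0))$.

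Next I would write $\phi=\phi(q_0)+\mathcal D_{q_0}(\phi)\diamond(T-q_0)$ on $U$ and $F=F(\phi(q_0))+\mathcal D_{\phi(q_0)}(F)\diamond(T-\phi(q_0))$ on $U'$. Composing the second with $\phi$, noting $(T-\phi(q_0))\circ\phi=\phi-\phi(q_0)$, applying the composition rule above, substituting $\phi-\phi(q_0)=\mathcal D_{q_0}(\phi)\diamond(T-q_0)$, and using associativity of $\diamond$ (Lemma \ref{(L)propertiesofsproduct}), I get
$$F\circ\phi=(F\circ\phi)(q_0)+\Big(\big(\mathcal D_{\phi(q_0)}(F)\circ\phi\big)\diamond\mathcal D_{q_0}(\phi)\Big)\diamond(T-q_0)$$
as functions on $U$. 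It then remains to verify that $E\colonequals\big(\mathcal D_{\phi(q_0)}(F)\circ\phi\big)\diamond\mathcal D_{q_0}(\phi)$ is continuous at every $p\in O(q_0)$. For this, $\mathcal D_{q_0}(\phi)$ is continuous at such $p$ by the definition of skew differentiability of $\phi$, while $\phi$ is continuous and $\phi(p)\in O(\phi(q_0))$, where $\mathcal D_{\phi(q_0)}(F)$ is continuous; hence both factors are continuous on the orbit $O(q_0)$ and Proposition \ref{(P)contiunuityofdiamond} gives that $E$ is too. By the uniqueness noted after Definition \ref{(D)skewderivatvie}, $F\circ\phi$ is skew differentiable at $q_0$ with $\mathcal D_{q_0}(F\circ\phi)=E$, which is the asserted identity.

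Finally I would deduce the two consequences by evaluation. For the orbital derivative, assume $O(q_0)$ nontrivial (otherwise everything is zero by convention) and pick $p\neq q_0$ in $O(q_0)$; skew convexity of $\phi$ gives $\phi(p)-\phi(q_0)=\phi^{o}(q_0)(p-q_0)$, and if this is nonzero then $O(\phi(q_0))$ is nontrivial and skew convexity of $F$ gives $F(\phi(p))-F(\phi(q_0))=F^{o}(\phi(q_0))\phi^{o}(q_0)(p-q_0)$; cancelling $p-q_0$ yields $(F\circ\phi)^{o}(q_0)=F^{o}(\phi(q_0))\phi^{o}(q_0)$, and the case $\phi^{o}(q_0)=0$ is immediate. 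For the skew derivative, evaluate $\mathcal D_{q_0}(F\circ\phi)=E$ at $q_0$: writing $c=\phi'(q_0)=\mathcal D_{q_0}(\phi)(q_0)$, if $c\neq0$ then $E(q_0)=\mathcal D_{\phi(q_0)}(F)\big({}^{c}\phi(q_0)\big)\,c$, and the hypothesis that $c$ commutes with $\phi(q_0)$ makes ${}^{c}\phi(q_0)=\phi(q_0)$, so $E(q_0)=F'(\phi(q_0))\phi'(q_0)$; the case $c=0$ is trivial. The step I expect to require the most care is not any single hard argument but the bookkeeping: getting the composition rule $(f\diamond g)\circ\phi=(f\circ\phi)\diamond(g\circ\phi)$ exactly right (this is the only place where action-preservation of $\phi$ is used, and it is essential), and then correctly invoking Proposition \ref{(P)contiunuityofdiamond} on the compact orbit $O(q_0)$ so that uniqueness of $\mathcal D_{q_0}$ applies. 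The commutativity hypothesis is precisely what removes the conjugation ${}^{c}\phi(q_0)$ in the last step; without it one still obtains the formula for $\mathcal D_{q_0}(F\circ\phi)$ but not the clean product rule for $F'$.
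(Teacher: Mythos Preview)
Your proof is correct and follows essentially the same approach as the paper: write out the defining identities for $\mathcal D_{\phi(q_0)}(F)$ and $\mathcal D_{q_0}(\phi)$, compose using the action-preserving property of $\phi$, and read off $\mathcal D_{q_0}(F\circ\phi)=(\mathcal D_{\phi(q_0)}(F)\circ\phi)\diamond\mathcal D_{q_0}(\phi)$. You are in fact more careful than the paper in two places: you state and prove the composition rule $(f\diamond g)\circ\phi=(f\circ\phi)\diamond(g\circ\phi)$ explicitly, and you correctly argue continuity only on the orbit $O(q_0)$ (the paper's assertion of continuity ``on $U$'' is stronger than what the hypotheses guarantee); you also spell out the evaluations giving the orbital and skew derivative formulas, which the paper leaves implicit.
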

\begin{proof}
	Set
	 $$E_1=\mathcal{D}_{\phi(q_0)}(F) \text{ and } E_2=\mathcal{D}_{q_0}(\phi).$$
		Then $$F=F(\phi(q_0))+E_1\diamond (T-\phi(q_0)) \text{ on } U',$$ 
		 $$\phi=\phi(q_0)+E_2\diamond (T-q_0) \text{ on } U .$$ 
		Using the fact that $\phi$ is action preserving, one can easily check that
		$$F\circ \phi=(F\circ \phi)(q_0)+\left( (E_1\circ \phi)\diamond E_2\right) \diamond (T-q_0) \text{ on } U .$$
		Since $(E_1\circ \phi)\diamond E_2$ is continuous on $U$, we conclude that $F\circ \phi$ is skew differentiable at $q_0$, and 
		$$\mathcal{D}_{q_0}(F\circ \phi)=\left(\mathcal{D}_{\phi(q_0)}(F)\circ \phi \right) \diamond \mathcal{D}_{q_0}(\phi).
		$$
	The chain rule for the orbital derivative is trivial. 	
\end{proof}
Regarding the inverse of a skew-differentiable function with respect to function composition, we prove the following result. 
\begin{proposition}\label{(P)compositioninverse}
		Let $\phi\colon U\to \mathbb{H}$ be a continuous action-preserving function on a region $U$. Suppose that $\psi:U'\to\mathbb{H}$ is a continuous function on a region $U'$ such that $\phi(U)\subset U'$ and $\psi(\phi(q))=q$ for all $q\in U$. If $\psi$ is skew differentiable at some $\phi(q_0)\in U'$ and the function $\mathcal{D}_{\phi(q_0)}(\psi)$ does not have a root on the orbit $O(\phi(q_0))$, then $\phi$ is skew differentiable at $q_0$. Moreover, we have
		$$\mathcal{D}_{q_0}(\phi)=(\mathcal{D}_{\phi(q_0)}(\psi)\circ \phi)^{\langle -1 \rangle}.$$
		In particular, $\phi'(q_0)=\psi'(\phi(q_0))^{-1}$ if $\phi'(q_0)$ commutes with $\phi(q_0)$.
\end{proposition}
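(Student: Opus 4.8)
The plan is to transport, along $\phi$, the relation that witnesses skew differentiability of $\psi$ at $\phi(q_0)$, and then to invert the coefficient that appears. Put $E\colonequals\mathcal{D}_{\phi(q_0)}(\psi)$, so that $\psi=q_0+E\diamond(T-\phi(q_0))$ on $U'$ (using $\psi(\phi(q_0))=q_0$), with $E$ continuous at each point of the compact orbit $O(\phi(q_0))$ and, by hypothesis, nowhere zero there; since $E=(\psi-q_0)\diamond(T-\phi(q_0))^{\langle-1\rangle}$ on $U'\setminus O(\phi(q_0))$, a skew product of functions continuous there (Propositions \ref{(P)contiunuityofdiamond} and \ref{(P)T-c}), $E$ is in fact continuous on all of $U'$. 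As $\phi$ is action-preserving, Proposition \ref{(P)actionpreserving} gives $(E\diamond(T-\phi(q_0)))\circ\phi=(E\circ\phi)\diamond(\phi-\phi(q_0))$, and $\psi\circ\phi$ is the identity on $U$, so composing the above identity with $\phi$ yields the key relation
\[
T-q_0=(E\circ\phi)\diamond(\phi-\phi(q_0))\qquad\text{on }U .
\]
From this, skew differentiability of $\phi$ at $q_0$ amounts to showing that the natural candidate $(E\circ\phi)^{\langle-1\rangle}$ is defined and continuous on $O(q_0)$.

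Away from $O(q_0)$ this is automatic. The map $\phi$ is continuous and, being action-preserving, injective, so by Brouwer's invariance of domain it is a homeomorphism onto the region $\phi(U)$, and orbit by orbit $\phi(O(x))=O(\phi(x))$ for every $x\in U$. Conjugating, by the homeomorphism $\phi$, the skew invertibility of $T-\phi(q_0)$ on $\phi(U)\setminus O(\phi(q_0))$ (Proposition \ref{(P)T-c}) shows that $\phi-\phi(q_0)$ is skew invertible on $U\setminus O(q_0)$; since $T-q_0$ is skew invertible there too, the key relation forces $E\circ\phi$ to be skew invertible on $U\setminus O(q_0)$, with skew inverse $G\colonequals(\phi-\phi(q_0))\diamond(T-q_0)^{\langle-1\rangle}$, continuous on $U\setminus O(q_0)$ by Proposition \ref{(P)contiunuityofdiamond}. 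Moreover $E\circ\phi$ is continuous on all of $U$, and, using $\phi(O(q_0))=O(\phi(q_0))$ together with the hypothesis on $E$, nowhere zero on $U$; thus $x\mapsto{}^{(E\circ\phi)(x)}x$ is a continuous self-map of $U$ that is already a bijection of $U\setminus O(q_0)$ and sends the orbit $O(q_0)$ into itself.

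The remaining, and I expect the hardest, point is to show that $x\mapsto{}^{(E\circ\phi)(x)}x$ also restricts to a bijection of the single orbit $O(q_0)$; once this is known, $E\circ\phi$ is skew invertible on the whole region $U$ (Lemma \ref{(L)Invertibility}) and $(E\circ\phi)^{\langle-1\rangle}$ is continuous on $U$ by Proposition \ref{(P)contiunuityofdiamond}(b). I would first pin down $\phi$ on $O(q_0)$. Assume $O(\phi(q_0))$ is nontrivial (a real $\phi(q_0)$ forces $q_0$ real, so $O(q_0)$ is a point and skew convexity on it is vacuous). By Proposition \ref{(P)skewdiffimpliesconvex}, $\psi$ is skew convex on $O(\phi(q_0))$, so on that orbit it equals the affine map $\Psi\colon y\mapsto q_0+\psi^{o}(\phi(q_0))(y-\phi(q_0))$, whose coefficient $\psi^{o}(\phi(q_0))=E(\overline{\phi(q_0)})$ is nonzero (Proposition \ref{(P)SkeconvexT-q} and the hypothesis on $E$); hence $\Psi$ is a bijection of $\mathbb{H}$, and since $\psi\circ\phi$ is the identity while $\phi$ maps $O(q_0)$ into $O(\phi(q_0))$, we get $\phi=\Psi^{-1}$ on $O(q_0)$, so $\phi$ is affine, in particular skew convex, on $O(q_0)$. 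With this affine description in hand, bijectivity of $x\mapsto{}^{(E\circ\phi)(x)}x$ on $O(q_0)$ is to be obtained from the compactness of $O(q_0)$ and, once more, invariance of domain, in the spirit of the proof of Proposition \ref{(P)contiunuityofdiamond}(b): injectivity off $O(q_0)$ is already available from the previous paragraph, and the affine structure together with invariance of domain are what must handle injectivity and surjectivity along $O(q_0)$ itself.

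Finally, assemble everything: set $\mathcal{D}_{q_0}(\phi)\colonequals(\mathcal{D}_{\phi(q_0)}(\psi)\circ\phi)^{\langle-1\rangle}$, which is continuous at the points of $O(q_0)$ and restricts to $G$ on $U\setminus O(q_0)$. The identity $\phi=\phi(q_0)+\mathcal{D}_{q_0}(\phi)\diamond(T-q_0)$ holds on $U\setminus O(q_0)$ by construction; both sides are continuous at every point of $O(q_0)$ (Proposition \ref{(P)contiunuityofdiamond}(a)) and agree on the dense set $U\setminus O(q_0)$, so the identity holds on all of $U$. Hence $\phi$ is skew differentiable at $q_0$ with $\mathcal{D}_{q_0}(\phi)=(\mathcal{D}_{\phi(q_0)}(\psi)\circ\phi)^{\langle-1\rangle}$. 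For the last assertion, apply the chain rule (Proposition \ref{(P)Chainrule}) to $\psi\circ\phi$, which is the identity function and therefore has skew derivative $1$ at $q_0$: if $\phi'(q_0)$ commutes with $\phi(q_0)$ this gives $1=\psi'(\phi(q_0))\,\phi'(q_0)$, and since $\psi'(\phi(q_0))=E(\phi(q_0))\neq0$ we conclude $\phi'(q_0)=\psi'(\phi(q_0))^{-1}$.
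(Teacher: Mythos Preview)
Your argument opens exactly as the paper's does: set $E=\mathcal D_{\phi(q_0)}(\psi)$, compose $\psi=q_0+E\diamond(T-\phi(q_0))$ with the action-preserving map $\phi$ to obtain the key relation $T-q_0=(E\circ\phi)\diamond(\phi-\phi(q_0))$ on $U$, and then try to invert $E\circ\phi$. From this point the paper is far more direct than you are: it simply observes that $E\circ\phi$ is nowhere zero on $U$ (using injectivity of $\phi$) and invokes Proposition~\ref{(P)skeweinvertibleonH} to conclude that $E\circ\phi$ is skew invertible, after which Proposition~\ref{(P)contiunuityofdiamond}(b) gives continuity of the inverse and the identity $\phi=\phi(q_0)+(E\circ\phi)^{\langle-1\rangle}\diamond(T-q_0)$ follows at once. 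Your two-case split (off $O(q_0)$ via pulling back $(T-\phi(q_0))^{\langle-1\rangle}$ along $\phi$, then on $O(q_0)$ via the affine structure of $\psi$) reaches the same destination but is considerably longer.

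The real gap in your write-up is the on-orbit step. You correctly reduce matters to showing that $x\mapsto{}^{(E\circ\phi)(x)}x$ is a bijection of the $2$-sphere $O(q_0)$, and you correctly extract that $\phi$ is affine on $O(q_0)$; but you then only say that bijectivity ``is to be obtained from compactness and invariance of domain'' with no mechanism. Those tools would upgrade a continuous \emph{injection} $S^2\to S^2$ to a homeomorphism, but you have not produced injectivity. Knowing $\phi$ is affine on $O(q_0)$ does not make $E\circ\phi$ affine (hence skew convex) there unless $E$ itself is affine on $O(\phi(q_0))$, and that is the conclusion of Lemma~\ref{(L)derivativeofske-convex}, whose hypothesis is stronger than mere skew differentiability of $\psi$ at one point. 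In effect, both the paper's one-line citation of Proposition~\ref{(P)skeweinvertibleonH} and your on-orbit argument tacitly rely on $E\circ\phi$ being skew convex on each orbit; the paper simply uses it, while you work harder without actually closing the circle. If you grant that skew convexity, you should invoke Proposition~\ref{(P)skeweinvertibleonH} as the paper does and drop the entire two-case analysis.
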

\begin{proof}
	Setting $E=\mathcal{D}_{\phi(q_0)}(\psi)$, we have 
	 $$\psi=q_0+E\diamond (T-\phi(q_0)) \text{ on } U'.$$
	Composing with $\phi$ and using the fact that $\phi$ is action-preserving, we obtain
	$$T=q_0+(E\circ \phi)\diamond (\phi-\phi(q_0)) \text{ on } U .$$
	Since $\phi$ is 1-1 and $E\circ \phi$ does not have a zero on the orbit $O(q_0)$, one can see that $E\circ \phi$ is nowhere zero on $U$. By Proposition \ref{(P)skeweinvertibleonH}, $E\circ \phi$ is skew invertible on $U$. It follows that 
			 	$$\phi=\phi(q_0)+(E\circ \phi)^{\langle -1 \rangle}\diamond (T-q_0) \text{ on } U .$$
	By Part (b) of Proposition \ref{(P)contiunuityofdiamond}, $(E\circ \phi)^{\langle -1 \rangle}$ is continuous. Therefore, $\phi$ is skew differentiable at $q_0$, and 
	$$\mathcal{D}_{q_0}(\phi)=(\mathcal{D}_{\phi(q_0)}(\psi)\circ \phi)^{\langle -1 \rangle}.$$	
\end{proof}
As an application of this proposition, one can show that the principal quaternionic logarithm $Log:\mathbb{H}\setminus (-\infty,0]\to \mathbb{H}$ (see \cite[Definition 3.4]{Weierstrassfactorization}) is skew regular, using the facts that $e^{Log (q)}=q$, for all $q\in \mathbb{H}\setminus (-\infty,0]$, and $e^q$ is a continuous action-preserving function on  $\mathbb{H}$. Moreover, we have $Log'(q)=q^{-1}$ for all $q\in \mathbb{H}\setminus (-\infty,0]$. The details are left to the reader.  

We conclude this part with some remarks.
\begin{remark}
	In \cite{Extensionresults2009}, the regular product of slice-regular functions is first defined for the series centered at $0$, and then extended to slice-regular functions (see \cite[Definition 5.3]{Extensionresults2009}). Proposition 5.12 in the same reference gives a formula for the regular product of regular functions over axially symmetric slice domains. This formula is just the formula for the right skew product (see Definition \ref{(D)rightproduct}).
\end{remark}
\begin{remark}
	We will prove that any slice-regular function over a symmetric slice domain is skew regular (see Theorem \ref{(T)sliceequivalentskew}).  It follows from Proposition \ref{(P)skewdiffimpliesconvex} that  slice-regular functions over  symmetric slice domains are skew-convex. The so-called representation formulas (see for example \cite[Formulas 1.8,1.9,1.10]{Regularfunctionsofaquaternionicvariable2013}) are simply consequences of the fact that slice-regular functions over symmetric slice domains are skew-convex. 
\end{remark}
\begin{remark}
	The notion of orbital derivative, defined in Definition \ref{(D)OrbitalDerivatvie}, is also introduced in \cite{Anewseriesexpansion}, where it is called spherical derivative. In \cite{Sliceregularfunctionsonrealalternativealgebras}, a Leibniz-type formula is given for the orbital product. The formula given in Proposition \ref{(P)PropertiesOrbitalD}  is a new formula. 
\end{remark}
\begin{remark}
	In \cite[Proposition 1.28]{Powerseriesanalyticityquaternions2012}, it is shown that slice-regular functions over a symmetric slice domain form a noncommutative ring under the regular product. The Leibniz formula for the slice derivative is also given in the same reference (see \cite[Proposition 1.40]{Powerseriesanalyticityquaternions2012}). Proposition \ref{(P)SkewderivativeFirstproperties} gives the corresponding results for skew-regular functions. 
\end{remark}
\begin{remark}
	The author has not been able to find any form of the chain rule for slice-regular functions in the literature. The only result in this direction seems to be Lemma 1.32 in \cite{Regularfunctionsofaquaternionicvariable2013} which, roughly speaking, states that the composition of two slice-regular functions is slice-regular if one of them is slice preserving.  
\end{remark}
 \end{subsection}
\begin{subsection}{Skew regularity and slice regularity}\label{(S)Skew regularity and slice regularity}
In this section, we show that skew-regular functions are slice regular, and moreover, the converse holds over symmetric slice domains.  We need to review some facts regarding slice-regular functions. For more details, see \cite{Regularfunctionsofaquaternionicvariable2013} and references therein. Let $\mathbb{S}$ denote the set of all $I\in\mathbb{H}$ such that $I^2=-1$. For $I\in \mathbb{S}$ and a subset $A$ of $\mathbb{H}$, put 
$A_I=A\cap (\mathbb{R}+\mathbb{R}I)$. Note that $\mathbb{R}+\mathbb{R}I$ is a commutative subfield of $\mathbb{H}$. In fact, it is isomorphic to $\mathbb{C}$ via $x+yI\mapsto x+yi$, using which we shall identify  $\mathbb{R}+\mathbb{R}I$ with the complex plane $\mathbb{C}$. 
\begin{definition}\label{(D)sliceregular}
	A function $F\colon U\to \mathbb{H}$ on an open set $U\subset \mathbb{H}$ is called (right) slice regular if 	for each $I\in \mathbb{S}$, the restriction $F_I$ of $F$ to $U_I$ is holomorphic in the sense that it has continuous partial derivatives and 
	$$\frac{1}{2}\left( \frac{\partial F_I}{\partial x}+\frac{\partial F_I}{\partial y}\,I\right)=0, \text{ on } U_I.$$
	If $F$ is slice regular on $U$, its slice derivative $F_s'(q)$ at $q\in U_I$ is defined to be 
	$$F_s'(q)=\frac{1}{2}\left( \frac{\partial F_I}{\partial x}-\frac{\partial F_I}{\partial y}\,I\right).$$ 
\end{definition}
First, we show that every skew-differentiable function is slice regular. 
\begin{proposition}\label{(P)skewimpliesslice}
	Let $F\colon U\to \mathbb{H}$ be a function on a region. If $F$ is skew regular on $U$, then it is slice regular on $U$, and moreover, its skew derivative is equal to its slice derivative. 
\end{proposition}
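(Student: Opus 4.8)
The plan is to reduce the claim to a one-variable statement on each slice $U_I$ and then recognize the skew-differentiability hypothesis as a complex-differentiability condition there. Fix $I\in\mathbb{S}$ and a point $q\in U_I$; write $q = x_0 + y_0 I$. If $q$ is real the orbit $O(q)$ is trivial and Definition \ref{(D)skewderivatvie2} already says $F'(q) = \lim_{p\to q}(F(p)-F(q))(p-q)^{-1}$ with $p$ ranging over all of $U$; restricting the limit to $p\in U_I$ (where $\mathbb{R}+\mathbb{R}I$ is a field) shows that $F_I$ is complex-differentiable at $q$ in the usual sense, hence holomorphic, and that $F_s'(q)$ equals this derivative, which is $F'(q)$. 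The substantive case is $q = x_0 + y_0 I$ with $y_0\neq 0$, so $O(q)$ is a nontrivial sphere.

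In that case I would use the skew-differentiability data directly. By Definition \ref{(D)skewderivatvie}, on $U$ we have $F = F(q) + \mathcal{D}_q(F)\diamond(T-q)$ with $\mathcal{D}_q(F)$ continuous at all points of $O(q)$; by Proposition \ref{(P)skewdiffimpliesconvex}, $F$ is skew convex on $O(q)$, and by Proposition \ref{(P)CharacterzationSkewconvex} it is affine there, $F(p) = a + b p$ for $p\in O(q)$. Now evaluate the defining identity $\mathcal{D}_q(F) = (F - F(q))\diamond(T-q)^{\langle-1\rangle}$ along the slice $U_I$. The key point is that for $p\in U_I$ one has $p q = q p$ (both lie in the commutative field $\mathbb{R}+\mathbb{R}I$), so by Proposition \ref{(P)T-c}, $(T-q)^{\langle-1\rangle}(p) = (p-q)^{-1}$ for $p\in U_I\setminus O(q)$; moreover $^{\,(p-q)^{-1}\overline q_0\cdots}$-type conjugations stay within $\mathbb{R}+\mathbb{R}I$, so unwinding the definition of $\diamond$ shows that for $p\in U_I$ near $q$ the skew product collapses to the ordinary product:
$$\big((F-F(q))\diamond(T-q)^{\langle-1\rangle}\big)(p) = (F_I(p) - F_I(q))(p-q)^{-1}.$$
Hence $\mathcal{D}_q(F)$, restricted to $U_I$, is the ordinary difference quotient of $F_I$ at $q$, extended continuously across $q$ by $\mathcal{D}_q(F)(q)=F'(q)$. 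Continuity of $\mathcal{D}_q(F)$ at $q$ therefore says exactly that $F_I$ is complex-differentiable at $q$ with derivative $F'(q)$. Running this over all $q\in U_I$ shows $F_I$ is holomorphic on $U_I$ (continuity of the partials follows since the complex derivative $q\mapsto F'(q)=\mathcal{D}_q(F)(q)$ is itself continuous, e.g. from skew convexity of $F'$ and local boundedness, or simply because a function with a complex derivative at every point of an open planar set is holomorphic there, hence $C^\infty$). Since this holds for every $I\in\mathbb{S}$, $F$ is slice regular on $U$, and the computed derivative $F'(q)$ coincides with $F_s'(q) = \tfrac12(\partial_x F_I - \partial_y F_I\, I)$ because the latter is the standard complex derivative of the holomorphic function $F_I$.

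The main obstacle is the careful verification that the skew product $(F-F(q))\diamond(T-q)^{\langle-1\rangle}$ really does reduce to the commutative difference quotient when the argument $p$ is confined to the slice $U_I$ through $q$ — one must check that all the conjugation actions $^{a}p$ appearing in the evaluation of $\diamond$ (Definition \ref{(D)product}) keep everything inside $\mathbb{R}+\mathbb{R}I$, using that $(T-q)^{\langle-1\rangle}(p)$ commutes with $p$ and with $q$ for $p\in U_I$. Once that commutativity bookkeeping is done, the equivalence with the Cauchy–Riemann formulation of slice regularity and the identification of the two derivatives are routine, matching the real-variable case recorded after Definition \ref{(D)skewderivatvie2}. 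I would also remark that the limit in Definition \ref{(D)skewderivatvie2}, which ranges over $q\to q_0$ in all of $U\setminus O(q_0)$, in particular contains the slice direction, so no separate argument is needed to see that $F'(q_0)$ agrees with the slice derivative rather than merely being \emph{a} one-sided value.
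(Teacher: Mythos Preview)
Your argument is correct and follows essentially the same approach as the paper: fix a slice $U_I$, use that $\mathbb{R}+\mathbb{R}I$ is commutative so that the skew product collapses to an ordinary product there, and conclude that $F_I$ is holomorphic with complex derivative $F'(q)$. The only cosmetic differences are that the paper evaluates the defining identity $F=F(q)+\mathcal{D}_q(F)\diamond(T-q)$ directly on $U_I$ (obtaining $F(z)=F(z_0)+\mathcal{D}_{z_0}(F)(z)(z-z_0)$ without passing through $(T-q)^{\langle-1\rangle}$ and Proposition~\ref{(P)T-c}), and it makes the splitting $F_I=F_1+JF_2$ with $F_1,F_2:U_I\to\mathbb{R}+\mathbb{R}I$ explicit in order to reduce to the standard holomorphy of $\mathbb{C}$-valued functions; you gesture at this step when invoking Goursat, and making that decomposition explicit would tighten your write-up.
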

\begin{proof}
	Fix $I\in\mathbb{S}$. We need to show that the restriction of $F$ to the set $U_I$ satisfies the Cauchy–Riemann equations.  We choose $J\in\mathbb{S}$ such that $J$ is orthogonal to $I$, that is, $IJ=-JI$. Set $z=x+yI$, where $x,y\in \mathbb{R}$. There exist functions $F_1,F_2\colon U_I\to \mathbb{R}+\mathbb{R}I$ such that 
	$$F(z)=F_1(z)+JF_2(z), \text{ for all } z\in U_I.$$
	Let $z_0=x_0+y_0I\in U_I$ be arbitrary. Since $F$ is skew differentiable at $z_0$, we have
	$$F=F(z_0)+\mathcal{D}_{z_0}(F)\diamond (T-z_0), \text{ as functions on } U.$$
	The fact that $\mathbb{R}+\mathbb{R}I$ is a commutative field implies that 
	$$F(z)=F(z_0)+\mathcal{D}_{z_0}(F)(z)(z-z_0), \text{ for all } z\in U_I, $$
	from which we obtain
	$$F_1'(z_0)+JF_2'(z_0)=\mathcal{D}_{z_0}(F)(z_0)=F'(z_0).$$
	In particular, $F_1$ and $F_2$ are holomorphic on $U_I$.  Therefore, $F$ is slice regular. The identity $F'=F'_s$ can easily be verified. 
\end{proof}
In general, a slice-regular function may not be skew regular. In fact, 
there exist slice-regular functions which are not even continuous (see Example 1.10 in \cite{Regularfunctionsofaquaternionicvariable2013}).  To exclude such pathologies, 
one is forced to restrict one's attention to a certain class  of open sets known as slice domains. Recall that a connected open subset $U$ of $\mathbb{H}$ is called a slice domain if $U\cap \mathbb{R}$ is nonempty and the intersection $U\cap (\mathbb{R}+\mathbb{R}q)$ is connected for all $q\in U\setminus \mathbb{R}$. 
It is known that any slice-regular function on a slice domain $U$ can uniquely be extended to a slice-regular function on the symmetric completion of $U$ (see \cite[Theorem 1.24]{Regularfunctionsofaquaternionicvariable2013}). We remark that the symmetric completion of a set $A\subset \mathbb{H}$ is the smallest invariant subset of $\mathbb{H}$ which contains $A$.   It turns out that the concepts of slice regularity and skew regularity coincide over symmetric (meaning invariant) slice domains.  

\begin{theorem}\label{(T)sliceequivalentskew}
	Let  $U\subset\mathbb{H}$ be  a symmetric slice domain.  A function $F\colon U\to \mathbb{H}$ is slice regular on $U$ if and only if it is skew regular on $U$, in which case its skew derivative equals its slice derivative. 
\end{theorem}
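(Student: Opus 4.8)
The easy half of the statement is already in hand: by Proposition~\ref{(P)skewimpliesslice}, if $F$ is skew regular then it is slice regular and $F'=F_s'$. So the whole content of the theorem is the reverse implication, which I would prove for a slice-regular function $F$ on a symmetric slice domain $U$ by exhibiting, for each $q_0\in U$, a witness $\mathcal{D}_{q_0}(F)$ for Definition~\ref{(D)skewderivatvie}; the equality of the two derivatives then comes back, for free, from Proposition~\ref{(P)skewimpliesslice} once skew regularity is known. Two standard facts about slice-regular functions over symmetric slice domains will be imported, cf.\ \cite{Regularfunctionsofaquaternionicvariable2013}: first, $F$ is continuous (indeed real analytic) on $U$; second, the representation formula holds on $U$, so the restriction of $F$ to each sphere $O\subset U$ is left affine, i.e.\ (Proposition~\ref{(P)CharacterzationSkewconvex}) $F\in\mathcal{S}(U)$, and in particular the orbital derivative $F^{o}$ is defined on $U$.

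Fix $q_0\in U$. Off the orbit the candidate is forced: set $G\colonequals(F-F(q_0))\diamond(T-q_0)^{\langle-1\rangle}$ on $U\setminus O(q_0)$, which is legitimate by Proposition~\ref{(P)T-c}. Since $F-F(q_0)$ and (by Lemma~\ref{(L)Inverseofc-invariant}) $(T-q_0)^{\langle-1\rangle}$ are skew convex, Theorem~\ref{(T)Thetwistedring} shows $G$ is skew convex on $U\setminus O(q_0)$; Propositions~\ref{(P)T-c} and \ref{(P)contiunuityofdiamond} show it is continuous there; and associativity of $\diamond$ together with $(T-q_0)^{\langle-1\rangle}\diamond(T-q_0)=1$ gives $F=F(q_0)+G\diamond(T-q_0)$ on $U\setminus O(q_0)$. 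Everything then rests on one point: that $G$ extends to a function on $U$ which is continuous at every point of $O(q_0)$. Granting this, Lemma~\ref{(L)limitskewconvexorbit} upgrades the extension to a skew-convex function on all of $U$, Proposition~\ref{(P)contiunuityofdiamond}(a) makes $F(q_0)+G\diamond(T-q_0)$ continuous on $U$, and since this function agrees with the continuous function $F$ on the dense set $U\setminus O(q_0)$, the identity $F=F(q_0)+G\diamond(T-q_0)$ holds throughout $U$; thus $F$ is skew differentiable at $q_0$ with $\mathcal{D}_{q_0}(F)=G$, and as $q_0$ was arbitrary, $F$ is skew regular.

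The continuous extension of $G$ across $O(q_0)$ is the crux, and the step I expect to be the main obstacle: the defining formula for $G$ degenerates into an indeterminate form near $O(q_0)$, since $(T-q_0)^{\langle-1\rangle}$ blows up there while the competing factor tends to $0$, and $F$ is not differentiable in arbitrary directions. The way out is to exploit the skew convexity of $G$ on the \emph{neighbouring} spheres. Assume $q_0\notin\mathbb{R}$ (the case $q_0\in\mathbb{R}$ is entirely analogous, with $O(q_0)=\{q_0\}$, and is in any case immediate from the power-series expansion of $F$ at the real point $q_0$, see \cite{Powerseriesanalyticityquaternions2012}), and pick $I_0\in\mathbb{S}$ with $q_0=x_0+y_0I_0$, $y_0>0$. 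On the slice $U_{I_0}$ commutativity of $\mathbb{R}+\mathbb{R}I_0$ turns the relation $wq-q_0w=1$ defining $w=(T-q_0)^{\langle-1\rangle}(q)$ (Example~\ref{(Ex)Invertibility}) into $w=(q-q_0)^{-1}$ and ${}^{w}q=q$, so that $G(q)=(F(q)-F(q_0))(q-q_0)^{-1}$ for $q\in U_{I_0}\setminus\{q_0,\overline{q}_0\}$ is a genuine difference quotient of the \emph{holomorphic} function $F_{I_0}$; hence $G(x+yI_0)\to F_s'(q_0)$ and $G(x-yI_0)\to(F(\overline{q}_0)-F(q_0))(\overline{q}_0-q_0)^{-1}=F^{o}(q_0)$ as $(x,y)\to(x_0,y_0)$. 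Now for $(x,y)$ close to, but distinct from, $(x_0,y_0)$, the sphere $O(x+yJ)\subset U\setminus O(q_0)$ meets the slice $\mathbb{R}+\mathbb{R}I_0$ in exactly the two points $x\pm yI_0$, and $G$ is left affine on that sphere (Proposition~\ref{(P)CharacterzationSkewconvex}), hence determined there: $G(x+yK)=\tfrac12\big(G(x+yI_0)+G(x-yI_0)\big)+\tfrac12\big(G(x+yI_0)-G(x-yI_0)\big)I_0^{-1}K$ for all $K\in\mathbb{S}$. Letting $(x,y)\to(x_0,y_0)$, the coefficients converge, which lets me extend $G$ to $O(q_0)$ by $G(x_0+y_0K)\colonequals\tfrac12\big(F_s'(q_0)+F^{o}(q_0)\big)-\tfrac12\big(F_s'(q_0)-F^{o}(q_0)\big)I_0K$; decomposing an arbitrary sequence in $U$ that tends to a point of $O(q_0)$ into the part lying on $O(q_0)$ and the part lying off it then shows that this $G$ is continuous on all of $U$. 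Feeding this back into the previous paragraph completes the argument, and $F'(q_0)=G(q_0)=F_s'(q_0)$ (the value of the displayed formula at $K=I_0$) recovers the asserted equality of derivatives.
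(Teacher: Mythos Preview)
Your argument is correct. The approach, however, differs from the paper's. The paper's proof is a one-line citation: for slice-regular $F$ on a symmetric slice domain, Theorem~3.17 of \cite{Regularfunctionsofaquaternionicvariable2013} (a remainder/factor theorem for the regular product) supplies directly a slice-regular $E$ with $F-F(q)=E\diamond(T-q)$; continuity of $E$ then comes from the standard fact that slice-regular functions on symmetric slice domains are continuous, and one is done. You instead build the witness by hand: define $G=(F-F(q_0))\diamond(T-q_0)^{\langle-1\rangle}$ off the orbit, and use the representation formula (equivalently, the affine structure on spheres, Proposition~\ref{(P)CharacterzationSkewconvex}) together with slice-holomorphicity on $U_{I_0}$ to manufacture the continuous extension across $O(q_0)$. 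Both arguments import a nontrivial fact from the literature---the paper leans on the factor theorem, you lean on the representation formula---so neither is strictly more elementary, but your route stays closer to the paper's own skew-product machinery and, as a bonus, derives the explicit description of $\mathcal{D}_{q_0}(F)$ on $O(q_0)$ that the paper only records later as Lemma~\ref{(L)derivativeofske-convex}. The price is length: three paragraphs against three lines.
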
   
\begin{proof}
	The ``if" direction has been proved in Proposition \ref{(P)skewimpliesslice}. Conversely, let the function $F\colon U\to \mathbb{H}$ be slice regular. Fix $q\in U$. It follows from Theorem 3.17 in \cite{Regularfunctionsofaquaternionicvariable2013} that there exists a slice-regular function $E\colon U\to \mathbb{H}$ such that
	$$F-F(q)=E\diamond (T-q), \text{ as functions on } U.$$
	Since $E$ is slice regular on a symmetric slice domain, $E$ must be continuous on $U$. Therefore, $F$ is skew differentiable at $q$. As the slice derivative is equal to $E(q)$ by the product rule for the slice derivative (see \cite[Proposition 1.40]{Regularfunctionsofaquaternionicvariable2013}), the proof is complete. 
\end{proof}
Next, we show that strongly analytic functions are skew differentiable. Recall that a function is called strongly analytic on a region if at every point $p$ in the region, the function can locally be represented as a convergent series centered at $p$. For more details, see \cite{Powerseriesanalyticityquaternions2012}. 
\begin{proposition}\label{(P)stronglyanalytic}
	Let $F\colon U\to\mathbb{H}$ be strongly analytic on a region $U$. Then, $F$ is skew regular on $U$. 
\end{proposition}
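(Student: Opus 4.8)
The plan is to fix an arbitrary $q\in U$ and prove that $F$ is skew differentiable at $q$; since $q$ is arbitrary, skew regularity on $U$ then follows from Definition \ref{(D)skewderivatvie}. As $F$ is strongly analytic it is continuous, and by Proposition \ref{(P)T-c} the function $T-q$ is skew invertible on $\mathbb{H}\setminus O(q)$, so
$$\mathcal{D}:=(F-F(q))\diamond(T-q)^{\langle-1\rangle}$$
is defined and, by Proposition \ref{(P)contiunuityofdiamond}, continuous on $U\setminus O(q)$, with $F=F(q)+\mathcal{D}\diamond(T-q)$ there. Since $O(q)$ has empty interior in $\mathbb{H}$ and $\mathcal{D}\diamond(T-q)$ is continuous as soon as $\mathcal{D}$ is, it suffices to show that $\mathcal{D}$ extends continuously across $O(q)$: the extension is then exactly the function $\mathcal{D}_q(F)$ required by Definition \ref{(D)skewderivatvie}.

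Suppose first $q\in\mathbb{R}$, so $O(q)=\{q\}$ and strong analyticity gives $F(x)=\sum_{m\ge0}a_m(x-q)^m$ near $q$. Since $q$ is central, every quaternion commutes with $x-q$, and one checks directly that $\bigl(\sum_{m\ge1}a_m(T-q)^{m-1}\bigr)\diamond(T-q)=F-F(q)$ near $q$; hence $\mathcal{D}$ coincides near $q$ with the convergent, therefore continuous, series $\sum_{m\ge1}a_m(x-q)^{m-1}$, and the continuous extension to $q$ is immediate. Now let $q\notin\mathbb{R}$, so that $O(q)$ is a $2$-sphere. By strong analyticity $F$ agrees, on an invariant neighbourhood $W$ of $O(q)$ with $q\in W\subset U$, with a convergent series centred at $O(q)$ whose partial sums $F_n$ are built from the functions $T-q$, $T-\overline q$ and constants using pointwise addition and the skew product; each $F_n$ is skew regular on $W$ (by Proposition \ref{(P)SkewderivativeFirstproperties}, since $T-q$, $T-\overline q$ and constants are skew regular), and $F$ is the locally uniform limit of the $F_n$ on $W$.

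The decisive point is that skew division by $T-q$ keeps these building blocks within the same class, continuously up to $O(q)$. On the one hand $(T-q)^{\diamond k}\diamond(T-q)^{\langle-1\rangle}=(T-q)^{\diamond(k-1)}$ on $\mathbb{H}\setminus O(q)$ by associativity. On the other hand, writing $P_{O(q)}(T)=(T-q)\diamond(T-\overline q)$ — the image of the factorization $P_{O(q)}(T)=(T-q)(T-\overline q)$ in $\mathbb{H}[T]$ under the homomorphism of Example \ref{(Ex)(s,delta)-skewmaps} — and using the formula $(T-q)^{\langle-1\rangle}(x)=(x-\overline q)P_{O(q)}(x)^{-1}$ of Proposition \ref{(P)T-c} together with $P_{O(q)}(\,{}^{a}x)=a\,P_{O(q)}(x)\,a^{-1}$ (valid because $P_{O(q)}(T)\in\mathbb{R}[T]$), one finds
$$P_{O(q)}(T)\diamond(T-q)^{\langle-1\rangle}=T-\overline q\qquad\text{on }\mathbb{H}\setminus O(q),$$
a function that extends continuously to all of $\mathbb{H}$. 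Consequently, for each $n$ the function $(F_n-F_n(q))\diamond(T-q)^{\langle-1\rangle}$ agrees on $W\setminus O(q)$ with a polynomial $D_n$ in the building blocks, and $\{D_n\}$ is the sequence of partial sums of a fixed series whose coefficient estimates — the same as those controlling the original series — yield, via the Weierstrass $M$-test, uniform convergence on the compact subsets of $W$, including those meeting $O(q)$ (as $O(q)\subset W$). Hence $D_n$ converges uniformly on compacts of $W$ to a function $\mathcal{D}^{*}$ continuous on all of $W$; moreover, by Proposition \ref{(P)continuityofskewproduct} applied on compact invariant subsets of $W\setminus O(q)$, where $F-F(q)$ is skew convex by Proposition \ref{(P)limitofskewfunctions}, we have $D_n=(F_n-F_n(q))\diamond(T-q)^{\langle-1\rangle}\to(F-F(q))\diamond(T-q)^{\langle-1\rangle}=\mathcal{D}$ pointwise on $W\setminus O(q)$. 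Therefore $\mathcal{D}^{*}=\mathcal{D}$ on $W\setminus O(q)$, so $\mathcal{D}$ extends continuously across $O(q)$, which completes the proof.

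The step I expect to be the main obstacle is this last one for general non-real $q$: pinning down the precise series supplied by strong analyticity — in particular its summand carrying a nonzero orbital derivative, which has the shape ``spherical series composed under $\diamond$ with $T-q$'' — confirming that the partial sums are honest skew polynomials in $T-q$ and $T-\overline q$, and verifying that after skew division by $T-q$ the convergence stays uniform right up to $O(q)$. This is exactly where analyticity, rather than mere continuity together with skew convexity, is essential: the function $x\mapsto|P_{O(q)}(x)|^{1/2}$ is continuous and skew convex but not skew differentiable at $q$, so no softer hypothesis would suffice.
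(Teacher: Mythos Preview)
Your argument for the real case is essentially the paper's proof, but you have misjudged its scope: the paper runs the \emph{same} argument uniformly for every $p\in U$, real or not. Strong analyticity at a (possibly non-real) point $p$ means precisely that $F=\sum_{m\ge0}q_m(T-p)^m$ locally, with absolute and uniform convergence --- a power series centred at the point $p$, not a spherical series centred at the orbit $O(p)$. One then sets $E=\sum_{m\ge1}q_m(T-p)^{m-1}$, which is continuous by uniform convergence, and invokes Proposition~\ref{(P)continuityofskewproduct} to get $F=F(p)+E\diamond(T-p)$. That is the whole proof. The identity $\sum_m q_m(T-p)^m = q_0 + \bigl(\sum_{m\ge1}q_m(T-p)^{m-1}\bigr)\diamond(T-p)$ holds at the level of partial sums in $\mathbb{H}[T]$ by associativity of $\diamond$ alone and passes to the limit; no centrality of $p$ is needed, and your sentence ``since $q$ is central, every quaternion commutes with $x-q$'' is both false (for $x\notin\mathbb{R}$ the element $x-q$ is not central) and unnecessary.

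Your detour for non-real $q$ through $P_{O(q)}$, $T-\overline q$, and spherical-type partial sums is therefore superfluous, and the step you flagged as the main obstacle --- ``pinning down the precise series supplied by strong analyticity'' at non-real $q$ --- dissolves once you use the correct definition: the series is $\sum_m q_m(T-q)^m$ and nothing more. You appear to be conflating \emph{strong} analyticity (power series at every point, cf.\ \cite{Powerseriesanalyticityquaternions2012}) with \emph{symmetric} analyticity (spherical expansions at every orbit, Definition~\ref{(D)sphericallyanalytic}); the latter is the subject of Theorem~\ref{(T)skewregularequivalentspherical}, not of this proposition.
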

\begin{proof}
	Fix $p\in U$. Since $F$ is strongly analytic at $p$, we can find an open neighborhood $p\in U_0\subset U$, and $q_0,q_1,...\in\mathbb{H}$ such that 
	$$F=\sum_{m=0}^\infty q_m(T-p)^m, \text{ as functions on } U_0,$$
	where the series converges absolutely and uniformly on $U_0$. Set
	$$E=\sum_{m=1}^\infty q_m(T-p)^{m-1}.$$
	Then $E$ is a continuous function on $U_0$ since the convergence is uniform. Using Proposition \ref{(P)continuityofskewproduct}, one can see that 
	$$F=F(p)+E\diamond (T-p), \text{ as functions on } U_0.$$
	It follow that $F$ is skew differentiable at $p$. 
\end{proof}
	

\end{subsection}
\begin{subsection}{Spherical series representation of skew-regular functions}	
	 In this section, we show that skew-regular functions can locally be represented by spherical series. Let us begin with the following result which is an improvement of the fact that any spherical series is slice regular on its region of convergence (see \cite[Proposition 1.11]{Anewseriesexpansion}). 
	\begin{proposition}\label{(P)sphericalanalyticisskewregular}
		Let $S(T)=\sum_{n=0}^\infty q_nP_O(T)^n$ be a formal spherical power series centered at an orbit $O$, and with radius of convergence $R>0$. Then, $S(T)$ is skew regular on $U(O,R)=\{q\in \mathbb{H}\,|\, |P_O(q)|<R \}.$ Moreover, the skew derivative of $S$ is given by the formula
		$$S'(T)=\sum_{n=1}^\infty q_{n}P_O(T)^{n-1}P'_O(T),$$
		where the series is absolutely and uniformly convergent on the compact subsets of $U(O,R)$.
	\end{proposition}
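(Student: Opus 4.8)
The plan is to reduce the statement to the behavior of ordinary complex power series on slices, together with the machinery already developed for the skew product and skew derivative. First I would fix an arbitrary point $p\in U(O,R)$ and show that $S(T)$ is skew differentiable at $p$. The natural candidate for $\mathcal{D}_p(S)$ comes from factoring $P_O(T)^n - P_O(p)^n$: since $P_O(T)$ has real coefficients, it is a central element and skew convex (being constant on orbits for the quadratic case, or linear for the trivial-orbit case), so one has the telescoping identity $P_O(T)^n = P_O(p)^n + \left(\sum_{k=0}^{n-1} P_O(T)^k P_O(p)^{n-1-k}\right)\diamond(P_O(T)-P_O(p))$. Using Proposition \ref{(P)T-c} (or rather the real-coefficient analogue noted after it, so that $(P_O(T)-P_O(p))^{\langle-1\rangle}$ is just pointwise inversion when $P_O(p)$ is not in the relevant orbit) I would write $P_O(T)-P_O(p) = (T-p)\cdot(\text{something invertible near }p)$, or more directly work with $\mathcal{D}_p$ applied to each summand via the chain rule, Proposition \ref{(P)Chainrule}, applied to the action-preserving map $T\mapsto P_O(T)$ — here I would need $P_O$ to be action-preserving, which holds because $P_O$ has real coefficients and conjugation-by-$p$ is a field automorphism fixing $\mathbb{R}$.

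The key analytic step is uniform convergence. I would set $E(T)=\sum_{n=1}^\infty q_n P_O(T)^{n-1}P_O'(T)$ and first check, via Proposition \ref{(P)sphericalpower} and the Weierstrass $M$-test (exactly as in that proposition, noting $\limsup|q_n|^{1/n}$ is unchanged by the shift in index and $P_O'$ is bounded on compacta), that this series converges absolutely and uniformly on compact subsets of $U(O,R)$; in particular $E$ is continuous on $U(O,R)$, hence continuous at every point of $O(p)$. Then I would verify the identity $S = S(p) + E\diamond(T-p)$ as functions on $U(O,R)$. For this I would apply Proposition \ref{(P)continuityofskewproduct}: on an invariant compact neighborhood of $O(p)$ inside $U(O,R)$, the partial sums $S_N$ converge uniformly to $S$, the functions $E_N\colonequals\sum_{n=1}^N q_n P_O(T)^{n-1}P_O'(T)$ converge uniformly to $E$, and each $S_N$ is a polynomial hence skew convex; since $S$ is skew convex (being a pointwise limit of skew-convex functions, by Proposition \ref{(P)limitofskewfunctions}), Proposition \ref{(P)continuityofskewproduct} gives $S_N\diamond(T-p)\to S\diamond(T-p)$ uniformly, so it suffices to establish the finite identity $S_N = S_N(p) + E_N\diamond(T-p)$ for each $N$. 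That finite identity is the telescoping computation above, combined with the observation that $P_O(T)^{n-1}P_O'(T) = \mathcal{D}_p(P_O(T)^n)$ — which itself follows by differentiating the polynomial $P_O(T)^n$ via the product rule in Proposition \ref{(P)SkewderivativeFirstproperties} and using that $P_O$ is central with real coefficients so that the skew product with $P_O(T)^k$ is just ordinary multiplication.

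Once $S = S(p) + E\diamond(T-p)$ with $E$ continuous on $O(p)$ is established for every $p\in U(O,R)$, skew regularity of $S$ on $U(O,R)$ follows by Definition \ref{(D)skewderivatvie}, and the skew derivative is $S'(p) = \mathcal{D}_p(S)(p) = E(p) = \sum_{n=1}^\infty q_n P_O(p)^{n-1}P_O'(p)$, which is the claimed formula; the convergence statement for $S'$ is exactly the uniform convergence of $E$ noted above. The main obstacle, I expect, is bookkeeping in the telescoping identity: one must be careful that $P_O(T)-P_O(p)$ need not vanish only on $O(p)$ but on the possibly larger set $Z(P_O(T)-P_O(p))$, so the factorization $P_O(T)-P_O(p)=(T-p)\diamond(\cdots)$ should be handled by the chain rule for $\mathcal{D}_p$ rather than by dividing, and one must confirm that $\mathcal{D}_p$ of a sum of polynomials behaves additively (Proposition \ref{(P)SkewderivativeFirstproperties}(1)) and that the resulting $E_N$ really is the stated partial sum. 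Everything else is a routine application of the uniform-convergence results already in hand.
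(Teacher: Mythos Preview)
Your argument has a genuine gap: you conflate the function $\mathcal{D}_p(S)$ with the skew derivative $S'$. These agree \emph{at the single point $p$}, since $S'(p)=\mathcal{D}_p(S)(p)$, but they are not the same function. In particular, your key claim that $\mathcal{D}_p\bigl(P_O(T)^n\bigr)=P_O(T)^{n-1}P_O'(T)$, independently of $p$, is false already for $n=1$ in the nontrivial-orbit case: one has $P_O(T)=P_O(p)+(T+p-2x_0)\diamond(T-p)$, so $\mathcal{D}_p(P_O)=T+p-2x_0$, whereas $P_O'(T)=2T-2x_0$. Consequently the identity $S=S(p)+E\diamond(T-p)$, with your choice $E=\sum q_nP_O(T)^{n-1}P_O'(T)$, does not hold. (A direct check: $(2T-2x_0)\diamond(T-p)$ at $q$ equals $2q^2-2pq-2x_0(q-p)$, which differs from $P_O(q)-P_O(p)=q^2-p^2-2x_0(q-p)$ by $q^2-2pq+p^2$, nonzero in general.) The chain rule of Proposition~\ref{(P)Chainrule} does not rescue this: it gives $\mathcal{D}_p(P_O^{\,n})=\bigl(\mathcal{D}_{P_O(p)}(T^n)\circ P_O\bigr)\diamond\mathcal{D}_p(P_O)$, and both factors depend on $p$.

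The paper's proof works with the correct, $p$-dependent object. Fixing $q_0\in U(O,R)$, it factors $P_O(T)^n=P_O(q_0)^n+P_n(T)(T-q_0)$ in $\mathbb{H}[T]$, obtains the recursion $P_{n+1}(T)=P_O(q_0)^n(T+q_0-2x_0)+P_O(T)P_n(T)$, and from it the explicit bound $|P_n(q)|\le n\,|q+q_0-2x_0|\,m^{n-1}$ with $m=\max(|P_O(q)|,|P_O(q_0)|)$. This bound, via the Weierstrass $M$-test, shows that $E(T)=\sum_n q_nP_n(T)$ converges uniformly on compacta of $U(O,R)$, hence is continuous there, and $S=S(q_0)+E\diamond(T-q_0)$ follows. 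Your convergence step and your use of Proposition~\ref{(P)continuityofskewproduct} are fine in spirit, but they must be applied to this $q_0$-dependent $E$, not to the $p$-independent series you wrote down.
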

	\begin{proof}
	 We only prove the first assertion. The proof of the second assertion is similar to that of the corresponding result for ordinary power series over real or complex numbers.  If $O$ is a trivial orbit, the result follows from Theorem 16 in \cite{Powerseriesanalyticityquaternions2012} since every skew-regular function is  slice regular by Proposition \ref{(P)skewimpliesslice}. Assume that $O$ is a nontrivial orbit with $P_O(T)=T^2-2x_0T+y_0^2$. 
	 Fix $q_0\in U(O,R)$. For every $n\geq 1$, there exists $P_n(T)\in\mathbb{H}[T]$ such that 
	 $$P_O(T)^n=P_O(q_0)^n+P_n(T) (T-q_0), \text{ as elements of } \mathbb{H}[T].$$
	 It is easy to check that $P_1(T)=T+q_0-2x_0$. Using induction on $n$, we obtain the following recursion formula
	 $$P_{n+1}(T)=P_O(q_0)^n(T+q_0-2x_0)+P_O(T)P_n(T).$$
	 Consider the series $E(T)=\sum_{n=0}^\infty q_nP_n(T)$. I claim that this series is uniformly convergent on the compact subsets of  $U(O,R)$. Let $q\in U(O,R)$ and $m$ be the maximum of the norms $|P_O(q)|$ and  $|P_O(q_0)|$. Using the recursion formula, one can easily show that 
	 $$
	  |P_n(q)|\leq n|q+q_0-2x_0|m^{n-1}, \text{ for all } n\geq 1.
	 $$
	 Now, the claim is easily proved using the Weierstrass $M$-test. In particular, we see that $E$ is continuous on $U(O,R)$. Since 
	 $$S=S(q_0)+E\diamond (T-q_0),$$ the result follows. 
	\end{proof}
	 We need some preparations about line integrals over quaternions. 
	Suppose that a curve $\gamma\colon  [a,b]\to \mathbb{H}$ is  piecewise smooth. For $\mathbb{H}$-valued functions $F,G$, defined and being continuous on the image of $\gamma$, we will use the following line integral along $\gamma$:
	$$\int_{\gamma}F(q)\,dq\, G(q)\colonequals \int_{a}^{b}F(\gamma(t))\gamma'(t)G(\gamma(t))dt.$$
	In the following proposition, we use the skew inverse of $T-p$ with respect to the right skew product (see Remark \ref{(R)rightskewinverse}).  
	\begin{proposition}\label{(P)Cauchyformula}
		Let $F\colon  U\to \mathbb{H}$ be skew regular on a region  and $O\subset U$ be a nontrivial orbit such that $U$ contains the closure of the set 
		$$U(O,R)=\{q\in \mathbb{H}\,|\, |P_O(q)|<R \}, \text{ for some } R>0.$$
		For any $I\in\mathbb{S}$, we have
		$$F(p)=\frac{1}{2\pi}\int_{\gamma}F(q)\,(-Idq)\, (T-p)^{\langle -1 \rangle_r}(q), \text{ for all } p\in U(O,R),$$
		where 	 $\gamma$ is the boundary of the set $U(O,R)\cap (\mathbb{R}+\mathbb{R}I)$. Moreover, we have
		$$
		F^o(p)=\frac{1}{2\pi}\int_{\gamma}F(q)\,(-Idq)\, P_{O(p)}(q)^{-1},\,  \text{ for all } p\in U(O,R).
		$$
	\end{proposition}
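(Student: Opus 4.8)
The plan is to reduce both identities to the classical Cauchy integral formula on the slice $\mathbb{R}+\mathbb{R}I\cong\mathbb{C}$, and then to pass from the slice to a general point of $U(O,R)$ by means of the skew-convexity of $F$. Recall that a skew-regular function is slice regular (Proposition~\ref{(P)skewimpliesslice}) and skew convex (Proposition~\ref{(P)skewdiffimpliesconvex}); I would use both facts.

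First I would set up the slice picture. Fix $I\in\mathbb{S}$, choose $J\in\mathbb{S}$ with $IJ=-JI$, identify $\mathbb{R}+\mathbb{R}I$ with $\mathbb{C}$, and write $F|_{U_I}=F_1+JF_2$ with $F_1,F_2$ ordinary holomorphic $\mathbb{C}$-valued functions on $U_I$ (this is exactly the content of Definition~\ref{(D)sliceregular}). The set $U(O,R)_I:=U(O,R)\cap(\mathbb{R}+\mathbb{R}I)$ equals $\{z\in\mathbb{C}:|P_O(z)|<R\}$; since $P_O(z)=z^2-2\text{Re}(p_0)z+|p_0|^2$ (for $p_0\in O$) is a monic quadratic with real coefficients, this is a bounded open set (a Cassini-type region, possibly with two symmetric components), its boundary $\gamma$ is a compact piecewise-smooth curve, and by hypothesis $\overline{U(O,R)_I}\subset U_I$, where $F_1,F_2$ are holomorphic. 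Applying the usual Cauchy formula to $F_1$ and $F_2$ over $\gamma$ and recombining, one obtains, for every $z\in U(O,R)_I$,
$$F(z)=\frac{1}{2\pi}\int_\gamma F(q)\,(-I\,dq)\,(q-z)^{-1}.$$
Since $q,z$ lie in the same slice, $qz=zq$, so $(q-z)^{-1}=(T-z)^{\langle -1\rangle_r}(q)$ by Remark~\ref{(R)rightskewinverse}; this is precisely the first asserted formula for points $p=z$ of the slice, and (by the same computation, see below) it also yields the second formula there.

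Next, for a general $p\in U(O,R)$ with $p\notin\mathbb{R}$ I would put $x=\text{Re}(p)$, $y=|\text{Im}(p)|>0$, and let $z=x+yI$, $\bar z=x-yI$ be the two points of $O(p)\cap(\mathbb{R}+\mathbb{R}I)$; both lie in $U(O,R)_I$, hence in the region bounded by $\gamma$. Because $F$ is skew convex, $F$ is affine on the orbit $O(p)$ (Proposition~\ref{(P)CharacterzationSkewconvex}), which gives the representation formula $F(p)=F(z)+F^{o}(p)(p-z)$ with $F^{o}(p)=\bigl(F(z)-F(\bar z)\bigr)(z-\bar z)^{-1}$. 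I would then substitute the slice formula of the previous paragraph for $F(z)$ and $F(\bar z)$, pull the constants $F^{o}(p)(p-z)$, resp.\ $(z-\bar z)^{-1}$, to the right under the integral sign, and reduce the statement to the two pointwise identities, valid for $q\in\gamma$,
$$(q-z)^{-1}+\bigl((q-z)^{-1}-(q-\bar z)^{-1}\bigr)(z-\bar z)^{-1}(p-z)=(T-p)^{\langle -1\rangle_r}(q),$$
$$\bigl((q-z)^{-1}-(q-\bar z)^{-1}\bigr)(z-\bar z)^{-1}=P_{O(p)}(q)^{-1}.$$
On the slice one has $(q-z)(q-\bar z)=q^2-2xq+|p|^2=P_{O(p)}(q)$, $(T-p)^{\langle -1\rangle_r}(q)=P_{O(p)}(q)^{-1}(q-\bar p)$ (Remark~\ref{(R)rightskewinverse}), $z-\bar z=2yI$ and $p-z=\text{Im}(p)-yI$; the second identity is then immediate from the partial-fraction relation $(q-z)^{-1}-(q-\bar z)^{-1}=(z-\bar z)P_{O(p)}(q)^{-1}$ in $\mathbb{C}$, and the first follows by writing $q-\bar p=(q-z)+(yI+\text{Im}(p))=(q-\bar z)+(\text{Im}(p)-yI)$ and using repeatedly that $(q-z)^{-1},(q-\bar z)^{-1},I$ all lie in the commutative field $\mathbb{R}+\mathbb{R}I$; both sides collapse to $\tfrac{1}{2y}\bigl((q-z)^{-1}(y-I\,\text{Im}(p))+(q-\bar z)^{-1}(y+I\,\text{Im}(p))\bigr)$. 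For the first formula the case $p\in\mathbb{R}$ is already contained in the slice computation since $\mathbb{R}\subset\mathbb{R}+\mathbb{R}I$.

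I expect the main obstacle to be the noncommutative bookkeeping rather than anything deep: because the scalar $I$ does not commute with the values of $F$, one must carry the factor $\tfrac{1}{2\pi i}$ of the classical Cauchy formula to the \emph{right} of $F(q)$ (this rearrangement is legitimate because $(q-z)^{-1}$, $dq$ and $I$ all lie in $\mathbb{R}+\mathbb{R}I$ and hence commute with one another), and one must be careful that $\bar p$ does not lie on the slice of integration when $p$ does not — which is exactly why the representation formula, rather than a naive application of Cauchy's formula, is needed off the slice. A minor point to check along the way is that $P_{O(p)}(q)\neq 0$ and $q\neq z$ for $q\in\gamma$, which holds because $z,\bar z\in O(p)\cap(\mathbb{R}+\mathbb{R}I)$ lie in the open set $U(O,R)_I$ and not on its boundary.
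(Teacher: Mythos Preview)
Your proposal is correct and follows essentially the same route as the paper: first obtain the formula on the slice $\mathbb{R}+\mathbb{R}I$ via the classical Cauchy formula (using slice regularity and the splitting $F=F_1+JF_2$), then extend to an arbitrary $p\in U(O,R)$ using skew convexity on the orbit $O(p)$. The only cosmetic difference is in the extension step: the paper observes that the integral on the right is itself skew convex in $p$ (since $P_{O(p)}(q)^{-1}$ is constant on the orbit and $q-\bar p$ is affine in $p$), so agreement with $F$ at the two slice points $z,\bar z$ forces equality on the whole orbit---whereas you unpack this by writing $F(p)=F(z)+F^{o}(p)(p-z)$ and verifying the resulting kernel identity directly; these are the same computation at different levels of abstraction.
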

	\begin{proof}
		Fix $p'\in U(O,R)$. Then, $ O(p')\cap (\mathbb{R}+\mathbb{R}I)=\{p_0,\overline{p}_0\}$ for some $p_0\in\mathbb{H}$. An argument similar to the proof of Lemma 6.3 in \cite{Regularfunctionsofaquaternionicvariable2013} shows that the desired formula holds for $p=p_0$ and $p=\overline{p}_0$, since $(T-p)^{\langle -1 \rangle_r}(q)=(q-p)^{-1}$ when $pq=qp$. It is easy ot verify that the assignment
		$$p\mapsto \frac{1}{2\pi}\int_\gamma F(q)\,(-Idq)\, (T-p)^{\langle -1 \rangle_r}(q),$$
		defines a (left) skew-convex function on $O(p')$. Since $F$ is skew convex on $O(p')$, it follows immediately that the desired formula holds on $O(p')$. This completes the proof of the first formula. The second formula is derived using the first formula and the definition of the orbital derivative.   
	\end{proof}
Let us give two remarks about this propositions. 
	\begin{remark}
		The first formula given in Proposition \ref{(P)Cauchyformula} can be regarded as an analogue of the Cauchy formula for holomorphic functions.  Although our formula looks similar to Formula 6.4  
		in \cite{Regularfunctionsofaquaternionicvariable2013}, there is a fundamental difference between the two formulas.  Using our notations, we can rewrite the left-hand analogue of Formula 6.4 in \cite{Regularfunctionsofaquaternionicvariable2013} as
		$$F(p)=\frac{1}{2\pi}\int_{\gamma}F(q)\,(-Idq)\, (q-T)^{\langle -1 \rangle}(p).$$
		This formula does not seem suitable for deriving spherical series presentations of skew-regular functions (see Proposition \ref{(P)skewdiffimpliespherical}). 
	\end{remark}
	\begin{remark}
		As a consequence of Proposition \ref{(P)Cauchyformula}, we see that the orbital derivative of a skew-regular function is continuous. In fact, one can use the second formula in the proposition to show that the orbital derivative of a skew-regular function is real analytic. However, it may not be skew regular.  
	\end{remark}	
We also need the following lemma. 
	\begin{lemma}\label{(L)SphericalT-pinverse}
		Let $p\in \mathbb{H}$, and $O\subset \mathbb{H}$ be a nontrivial orbit such that $p\notin O$. Then, the right skew-regular function $(T-p)^{\langle -1 \rangle_r}$ can be represented as the (right) series 
		$$(T-p)^{\langle -1 \rangle_r}=\sum_{n=0}^\infty P_O(T)^{-n-1}(T+p-2x_0)P_O(p)^n,$$
		where the series is absolutely and uniformly convergent on the compact subsets of the region $$\{q\in \mathbb{H}|\, |P_O(p)|<|P_O(q)|\}.$$
	\end{lemma}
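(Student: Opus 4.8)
The plan is to settle convergence with the Weierstrass $M$-test and then identify the sum of the series by the elementary fact that, for $q\notin O(p)$, the quaternion $(T-p)^{\langle-1\rangle_r}(q)$ is the \emph{unique} solution $X\in\mathbb{H}$ of $qX-Xp=1$. Throughout I would write $P_O(T)=T^2-2x_0T+y_0^2$ with $x_0=\R(p')$ and $y_0^2=|p'|^2$ for $p'\in O$, put $\Omega=\{q\in\mathbb{H}:|P_O(p)|<|P_O(q)|\}$, and let $S(q)$ denote the formal sum of the series at $q$.

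First I would collect the preliminaries. Since $P_O$ has real coefficients, conjugate quaternions have $P_O$-values of equal norm; hence $q\in O(p)$ would force $|P_O(q)|=|P_O(p)|$, so $\Omega\cap O(p)=\emptyset$, and since $p\notin O$ makes $|P_O(p)|>0$ we also get $\Omega\cap O=\emptyset$. Thus on $\Omega$ the inverse $P_O(q)^{-1}$ exists, every term of the series is defined, and $(T-p)^{\langle-1\rangle_r}$ is defined there by Remark~\ref{(R)rightskewinverse}. On a compact $K\subset\Omega$ one has $r\colonequals\max_{q\in K}|P_O(p)|/|P_O(q)|<1$ while $|q+p-2x_0|$ and $|P_O(q)|^{-1}$ stay bounded, so $|P_O(q)^{-n-1}(q+p-2x_0)P_O(p)^n|\le C(K)\,r^n$; the Weierstrass $M$-test then gives absolute and uniform convergence on $K$, and $S$ is continuous on $\Omega$. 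Finally, evaluating the right skew product of Definition~\ref{(D)rightproduct} gives $\big((T-p)^{\langle-1\rangle_r}\diamond_r(T-p)\big)(q)=q\,g(q)-g(q)\,p$ where $g=(T-p)^{\langle-1\rangle_r}$; since the left side equals $1$, this yields $q\,g(q)-g(q)\,p=1$ on $\Omega$. (The same identity also follows by inserting the closed form $g(q)=P_{O(p)}(q)^{-1}(q-\overline{p})$ and using $q\overline{p}+qp=2\R(p)\,q$ and $\overline{p}\,p=|p|^2$.)

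The core step is to verify that $S$ satisfies the same equation. Since $P_O$ has real coefficients, $q$ commutes with $P_O(q)$ (hence with $P_O(q)^{-1}$) and $p$ commutes with $P_O(p)$; using this and $p^2-2x_0p=P_O(p)-y_0^2$, a rearrangement of the absolutely convergent series gives
\begin{align*}
q\,S(q)-S(q)\,p
&=\sum_{n=0}^{\infty}P_O(q)^{-n-1}\big(q^2-2x_0q-p^2+2x_0p\big)P_O(p)^n\\
&=\sum_{n=0}^{\infty}P_O(q)^{-n-1}\big(P_O(q)-P_O(p)\big)P_O(p)^n\\
&=\sum_{n=0}^{\infty}\big(P_O(q)^{-n}P_O(p)^n-P_O(q)^{-n-1}P_O(p)^{n+1}\big)=1,
\end{align*}
the last series telescoping since $\|P_O(q)^{-n}P_O(p)^n\|\le(|P_O(p)|/|P_O(q)|)^n\to 0$ on $\Omega$. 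Now, for $q\notin O(p)$ the $\mathbb{R}$-linear endomorphism $X\mapsto qX-Xp$ of $\mathbb{H}$ is injective — a nonzero $X$ with $qX=Xp$ would give $q=XpX^{-1}\in O(p)$ — hence it is bijective and $qX-Xp=1$ has exactly one solution. By the two computations, $g(q)$ and $S(q)$ are both that solution for each $q\in\Omega$, so $S=(T-p)^{\langle-1\rangle_r}$ on $\Omega$; together with the uniform convergence on compact subsets, this is the assertion.

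I expect the only delicate point to be the bookkeeping in the central display: keeping straight exactly which factors commute (only the real-coefficient quantities $P_O(q),q$ with one another and $P_O(p),p$ with one another, in the pairings that actually occur) and checking that each rearrangement, and the final telescoping, are licensed by the absolute convergence established in the first step. The remaining steps are routine.
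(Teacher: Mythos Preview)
Your proof is correct and takes a genuinely different route from the paper's. The paper argues algebraically within the right skew-product calculus: it records the factorization
\[
P_O(T)=P_O(p)+(T-p)\diamond_r(T+p-2x_0),
\]
divides through by $P_O(T)$, and then inverts $(T-p)$ by writing
\[
(T-p)^{\langle-1\rangle_r}=(T+p-2x_0)\diamond_r P_O(T)^{-1}\diamond_r\bigl(1-P_O(p)\diamond_r P_O(T)^{-1}\bigr)^{\langle-1\rangle_r},
\]
expanding the last factor as the geometric series $\sum_{n\ge0}P_O(T)^{-n}P_O(p)^n$ on $\Omega$. Your approach bypasses all of this skew-product manipulation: you establish convergence by the $M$-test, then verify directly that the sum $S(q)$ solves the characterizing equation $qX-Xp=1$ by a clean telescoping computation, and invoke uniqueness (injectivity of $X\mapsto qX-Xp$ for $q\notin O(p)$). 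The telescoping identity $P_O(q)^{-n-1}\bigl(P_O(q)-P_O(p)\bigr)P_O(p)^n=P_O(q)^{-n}P_O(p)^n-P_O(q)^{-n-1}P_O(p)^{n+1}$ is in fact the pointwise content of the geometric-series step in the paper's proof, so the two arguments share the same computational core; but your packaging is more elementary and self-contained (no need to justify the intermediate skew-inverse identities), while the paper's version is more constructive and explains where the series comes from rather than merely verifying it.
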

\begin{proof}
	We have 
	 $$P_O(T)=P_O(p)+(T-p)\diamond_r (T+p-2x_0), \text{ as functions on } \mathbb{H}.$$
	Note that $P_O(T)\in \mathbb{R}[T]$ is skew invertible on $\mathbb{H}\setminus O(p)$ with respect to the left and right skew products (see the discussion at the end of Section \ref{(S)Skewinvertible}).  
	 It follows that $$1-P_O(p)\diamond_r P_O(T)^{-1}=(T-p)\diamond_r (T+p-2x_0)\diamond_r P_O(T)^{-1},$$ 
	 as functions on $\mathbb{H}\setminus O(p)$. Then, one can see that
	$$
	(T-p)^{\langle -1\rangle_r}=(T+p-2x_0)\diamond_r P_O(T)^{-1}\diamond_r \left( 1-P_O(p)\diamond_r P_O(T)^{-1}\right)^{\langle -1\rangle_r},
	$$
	as functions on $\{q\in \mathbb{H}|\, |P_O(p)|<|P_O(q)|\}.$ Now, 
	the result follows from the identity 
	$$
\left( 1-P_O(p)\diamond_r P_O(T)^{-1}\right)^{\langle -1\rangle_r}=\sum_{n=0}^\infty P_O(T)^{-n}P_O(p)^n,
	$$
	where the (right) series converges absolutely and uniformly on the compact subsets of the region  $\{q\in \mathbb{H}|\, |P_O(p)|<|P_O(q)|\}.$
\end{proof}
Now, we can prove that skew-regular functions can locally be presented as spherical series.  
	\begin{proposition}\label{(P)skewdiffimpliespherical}
		Let $F\colon  U\to \mathbb{H}$ be skew regular on a region $U$. Let $O\subset U$ be a nontrivial orbit such that $U$ the closure of the set 
		$$U(O,R)=\{q\in \mathbb{H}\,|\, |P_O(q)|<R \}.$$  Then, there exist spherical series $S_1(T), S_2(T)$, centered at $O$, which are convergent on $U(O,R)$ and satisfy 
		$$F(T)=S_1(T)+S_2(T)\diamond T, \text{ as functions on } U(O,R).$$ 
	\end{proposition}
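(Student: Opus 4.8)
The plan is to combine the Cauchy-type formula of Proposition \ref{(P)Cauchyformula} with the right-series expansion of $(T-p)^{\langle -1 \rangle_r}$ supplied by Lemma \ref{(L)SphericalT-pinverse}. Write $P_O(T)=T^2-2x_0T+y_0^2$, fix $I\in\mathbb{S}$, and let $\gamma$ be the boundary of $U(O,R)\cap(\mathbb{R}+\mathbb{R}I)$, so that $|P_O(q)|=R$ for $q\in\gamma$. Since $F$ is skew regular, it is continuous on $\overline{U(O,R)}\subset U$ (Proposition \ref{(P)Skew-diffimpliescontinuity}), in particular on $\gamma$. Now fix $p\in U(O,R)$ with $p\notin O$; then $|P_O(p)|<R$, so $\gamma$ is a compact subset of $\{q\,:\,|P_O(p)|<|P_O(q)|\}$, and in particular $q\notin O(p)$ for every $q\in\gamma$. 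By Lemma \ref{(L)SphericalT-pinverse}, the series $\sum_{n=0}^\infty P_O(q)^{-n-1}(q+p-2x_0)P_O(p)^n$ converges, uniformly for $q\in\gamma$, to $(T-p)^{\langle -1 \rangle_r}(q)$.

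Substituting this expansion into $F(p)=\frac{1}{2\pi}\int_\gamma F(q)\,(-I\,dq)\,(T-p)^{\langle -1 \rangle_r}(q)$ and integrating term by term (legitimate because of the uniform convergence over the compact curve $\gamma$), then splitting $q+p-2x_0=(q-x_0)+(p-x_0)$ and pulling the $p$-dependent factors out of the integrals, I obtain
$$F(p)=\sum_{n=0}^\infty a_n\,P_O(p)^n+\sum_{n=0}^\infty b_n\,(p-x_0)P_O(p)^n,$$
where $a_n=\frac{1}{2\pi}\int_\gamma F(q)\,(-I\,dq)\,P_O(q)^{-n-1}(q-x_0)$ and $b_n=\frac{1}{2\pi}\int_\gamma F(q)\,(-I\,dq)\,P_O(q)^{-n-1}$ are independent of $p$. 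As $P_O(p)$ is a polynomial in $p$, it commutes with $p$, so $(p-x_0)P_O(p)^n=P_O(p)^np-x_0P_O(p)^n$; collecting terms gives
$$F(p)=\sum_{n=0}^\infty (a_n-x_0b_n)\,P_O(p)^n+\Bigl(\sum_{n=0}^\infty b_n\,P_O(p)^n\Bigr)p=S_1(p)+S_2(p)\,p,$$
where $S_1(T)=\sum_{n\ge 0}(a_n-x_0b_n)P_O(T)^n$ and $S_2(T)=\sum_{n\ge 0}b_nP_O(T)^n$ are spherical series centered at $O$. Since $(S_2\diamond T)(p)=S_2({}^{p}p)\,p=S_2(p)\,p$, this is precisely $F=S_1+S_2\diamond T$ on $U(O,R)\setminus O$. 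Both sides are continuous on $U(O,R)$ ($F$ by Proposition \ref{(P)Skew-diffimpliescontinuity}, the series $S_1$ and $S_2$ by uniform convergence, and $S_2\diamond T$ by Proposition \ref{(P)contiunuityofdiamond}), and $U(O,R)\setminus O$ is dense in $U(O,R)$ because the sphere $O$ is nowhere dense in $\mathbb{H}$; hence the identity holds on all of $U(O,R)$.

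It remains to check that $S_1$ and $S_2$ converge on $U(O,R)$. Bounding the line integrals by the length of $\gamma$ times the maxima of $|F|$ and $|q-x_0|$ over $\gamma$, and using $|P_O(q)|=R$ on $\gamma$, one gets $|b_n|\le CR^{-n}$ and $|a_n|\le C'R^{-n}$ for suitable constants $C,C'$, whence $\limsup_n|b_n|^{1/n}\le 1/R$ and $\limsup_n|a_n-x_0b_n|^{1/n}\le 1/R$. By Proposition \ref{(P)sphericalpower}, both $S_1$ and $S_2$ converge absolutely and uniformly on the compact subsets of $U(O,R)$, which finishes the argument.

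The only real subtleties are the two bookkeeping points: justifying the term-by-term integration, which rests entirely on the uniform-convergence clause of Lemma \ref{(L)SphericalT-pinverse} together with the compactness of $\gamma$ and its being contained in the annular region $\{|P_O(p)|<|P_O(q)|\}$; and keeping track of the noncommutativity when reorganizing the two series, which succeeds only because the factors $P_O(p)^n$ and $p-x_0$ lie in the commutative algebra generated by $p$, so they may be moved past one another and factored out of the sums. Extending the identity across the center orbit $O$, where the expansion of $(T-p)^{\langle -1 \rangle_r}$ degenerates to its $n=0$ term, is then a routine continuity matter.
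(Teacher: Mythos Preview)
Your proof is correct and follows essentially the same route as the paper: apply the Cauchy-type formula of Proposition~\ref{(P)Cauchyformula}, expand the kernel via Lemma~\ref{(L)SphericalT-pinverse}, interchange sum and integral by uniform convergence on $\gamma$, and split $q+p-2x_0$ into a $q$-part and a $p$-part, using that $p$ commutes with $P_O(p)^n$. The only cosmetic differences are that the paper splits as $(q-2x_0)+p$ rather than $(q-x_0)+(p-x_0)$ (yielding the same $S_1,S_2$ after your rearrangement), and that you add the explicit continuity extension across $O$ and the coefficient bound $|a_n|,|b_n|\le C R^{-n}$ for convergence---both welcome clarifications that the paper leaves implicit.
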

	\begin{proof}
	Fix $I\in\mathbb{S}$ and let $\gamma$ be the boundary of $U(O,R)\cap (\mathbb{R}+\mathbb{R}I)$. An application of Proposition \ref{(P)Cauchyformula} gives
		$$F(p)=\frac{1}{2\pi}\int_{\gamma}F(q)\,(-Idq)\, (T-p)^{\langle -1 \rangle_r}(q), \text{ for all } p\in U(O,R).$$ 
		Note that $|P_O(p)|<|P_O(q)|=R$ for all $p\in U(O,R)$ and $q\in \gamma$. Let $P_O(T)=T^2-2x_0T+y_0^2$.
		Using Lemma \ref{(L)SphericalT-pinverse}, we see that
	\begin{align*}
		F(p)=&\frac{1}{2\pi}\int_{\gamma} F(q)\,(-Idq)\, (T-p)^{\langle -1 \rangle_r}(q)\\
		=&\frac{1}{2\pi}\int_{\gamma} F(q)\,(-Idq)\, \left( \sum_{n=0}^\infty P_O(q)^{-n-1}(q+p-2x_0)P_O(p)^n \right)\\
		=&  \sum_{n=0}^\infty\left(\frac{1}{2\pi}\int_{\gamma} F(q)\,(-Idq)\,  P_O(q)^{-n-1}(q+p-2x_0)\right)P_O(p)^n. 
	\end{align*}
	It is allowed to change the order of summation and integration since the (right) series converges absolutely and uniformly on the compact set $\gamma$. 
	Setting 
	$$
	S_1(T)=\sum_{n=0}^\infty\left(\frac{1}{2\pi}\int_{\gamma} F(q)\,(-Idq)\,  P_O(q)^{-n-1}(q-2x_0)\right)P_O(T)^n, \text{ and }
	$$
	$$
	S_2(T)=\sum_{n=0}^\infty\left(\frac{1}{2\pi}\int_{\gamma} F(q)\,(-Idq)\,  P_O(q)^{-n-1}\right)P_O(T)^n,
	$$
	we see that 
	$$F(T)=S_1(T)+S_2(T)\diamond T, \text{ as functions on } U(O,R),$$
	completing the proof. 		
	\end{proof}
	Finally, we state a useful characterization of skew-regular functions. Following \cite[Definition 4.2]{Anewseriesexpansion}, we first introduce a definition.
	\begin{definition}\label{(D)sphericallyanalytic}
		A function $F\colon  U\to\mathbb{H}$ on a region $U$ is called symmetrically analytic if it has a spherical series representation at every orbit $O\subset U$, that is, \\
		(1) If $O\subset U$ is trivial,  there exists a spherical series $S(T)$, centered at $O$, which is convergent in a neighborhood $V\subset U$ of $O$ and satisfy \\
		$F(T)=S(T), \text{ as functions on } V.$\\
		(2) If $O\subset U$ is nontrivial, there exist spherical series $S_1(T),S_2(T)$, centered at $O$, which are convergent in a neighborhood $V\subset U$ of $O$ and satisfy
		$$F(T)=S_1(T)+S_2(T)\diamond T, \text{ as functions on } V.$$
	\end{definition} 
	Now we come to the main result of this section. 
	\begin{theorem}\label{(T)skewregularequivalentspherical}
		A function $F\colon  U\to \mathbb{H}$ on a region $U\subset \mathbb{H}$ is skew regular if and only if it is symmetrically analytic.
	\end{theorem}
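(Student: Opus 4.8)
The plan is to deduce the theorem by assembling the local representation results of the previous subsections, handling trivial and nontrivial orbits separately and then checking that the relevant conditions are genuinely local near each orbit.

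For the ``only if'' direction I would take a skew-regular $F\colon U\to\mathbb{H}$ and verify symmetric analyticity orbit by orbit. If $O=O(q)\subset U$ is nontrivial then, $O$ being compact and $U$ open, one can choose $R>0$ with $\overline{U(O,R)}\subset U$, and Proposition \ref{(P)skewdiffimpliespherical} directly yields spherical series $S_1,S_2$ centered at $O$, convergent on $U(O,R)$, with $F=S_1+S_2\diamond T$ on $U(O,R)$ --- which is exactly the representation demanded at $O$. If $O=\{r\}$ is a trivial orbit, then $F$ is slice regular near $r$ by Proposition \ref{(P)skewimpliesslice}, and the classical power-series expansion of a slice-regular function on a ball about a real point (Theorem 16 of \cite{Powerseriesanalyticityquaternions2012}, already invoked inside the proof of Proposition \ref{(P)sphericalanalyticisskewregular}) writes $F=\sum_{n\geq 0}q_n(T-r)^n$ near $r$, which is precisely a spherical series centered at the trivial orbit $\{r\}$. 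Hence $F$ is symmetrically analytic.

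For the ``if'' direction I would fix $q\in U$, set $O=O(q)$, and show $F$ is skew differentiable at $q$. By hypothesis $F$ coincides on some neighbourhood of $O$ with a spherical series (a plain power series when $O$ is trivial) or with $S_1+S_2\diamond T$, where $S_1,S_2$ are spherical series centered at $O$. Shrinking to the invariant open set $U(O,R)$ for $R$ small enough that $U(O,R)$ sits inside that neighbourhood, Proposition \ref{(P)sphericalanalyticisskewregular} makes each occurring (spherical or ordinary) power series skew regular on $U(O,R)$; since $T$ is skew regular on $\mathbb{H}$ (indeed $\mathcal{D}_p(T)\equiv 1$) and skew regularity passes to sub-regions, Proposition \ref{(P)SkewderivativeFirstproperties} makes $S_2\diamond T$, and then $S_1+S_2\diamond T$, skew regular on $U(O,R)$. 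Thus on the invariant open neighbourhood $U(O,R)$ of $O$ the function $F$ equals a function that is skew differentiable at $q$.

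It remains to upgrade this to skew differentiability of $F$ as a function on all of $U$, which is the only point requiring care. By Proposition \ref{(P)T-c}, $T-q$ is skew invertible on $U\setminus O$, so $\mathcal{D}\colonequals(F-F(q))\diamond(T-q)^{\langle-1\rangle}$ is a well-defined function on $U\setminus O$ with $F-F(q)=\mathcal{D}\diamond(T-q)$ there; on $U(O,R)\setminus O$ it agrees, by the uniqueness noted after Definition \ref{(D)skewderivatvie}, with the skew derivative of the local skew-regular representative, which is continuous at every point of $O$. Filling in these limiting values extends $\mathcal{D}$ to a function on $U$ that is continuous at each point of $O$; since $F$ is continuous (being locally a convergent series) and each orbit of $\mathbb{H}$ has empty interior, letting points of $U\setminus O$ tend to a given $p_0\in O$ shows $F=F(q)+\mathcal{D}\diamond(T-q)$ at $p_0$ too. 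Hence $F$ is skew differentiable at $q$, and since $q$ was arbitrary, $F$ is skew regular on $U$. The main obstacle is therefore bookkeeping rather than analysis: one must confirm that ``skew differentiable at $q$'' depends only on $F$ near $O(q)$, which rests on the global availability of $(T-q)^{\langle-1\rangle}$ on $U\setminus O(q)$ (Proposition \ref{(P)T-c}) and on continuity of $F$ across $O(q)$; all the substantive analytic work has already been carried out in Propositions \ref{(P)sphericalanalyticisskewregular} and \ref{(P)skewdiffimpliespherical}.
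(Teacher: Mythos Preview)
Your proposal is correct and follows essentially the same route as the paper: both directions are reduced to Propositions \ref{(P)sphericalanalyticisskewregular} and \ref{(P)skewdiffimpliespherical}, together with the classical power-series expansion at a real center for trivial orbits. Your treatment is in fact more careful than the paper's one-line appeal to Proposition \ref{(P)sphericalanalyticisskewregular} for the ``if'' direction, since you explicitly handle the local-to-global passage (extending $\mathcal{D}_q(F)$ from a neighbourhood of $O(q)$ to all of $U$ via $(T-q)^{\langle-1\rangle}$ and checking the identity across $O(q)$), a point the paper leaves implicit.
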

	\begin{proof}
		The ``if" direction follows from Proposition \ref{(P)sphericalanalyticisskewregular}. The other direction follows from Proposition \ref{(P)skewdiffimpliespherical} if $O$ is a nontrivial orbit. In the case of trivial orbits, the result follows from Proposition 2.7  in \cite{Extensionresults2009}. 
	\end{proof}
	
	Theorem \ref{(T)skewregularequivalentspherical} has a number of important consequences which will be studied in a sequel to this paper. Let us just mention two direct consequences. The first corollary  is the generalization of the fact that the slice derivative of a slice-regular function is slice regular. 
	\begin{corollary}\label{(C)skewregularininiftlydiff}
		If  a function $F\colon  U\to \mathbb{H}$ is skew regular on a region $U\subset \mathbb{H}$, then it is infinitely skew differentiable. In particular, the  skew derivative $F'\colon  U\to \mathbb{H}$ of $F$  is skew regular on $U$. 
	\end{corollary}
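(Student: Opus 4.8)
The plan is to deduce Corollary \ref{(C)skewregularininiftlydiff} directly from Theorem \ref{(T)skewregularequivalentspherical}, so that everything reduces to a local statement about spherical series. Suppose $F\colon U\to\mathbb{H}$ is skew regular. By Theorem \ref{(T)skewregularequivalentspherical}, $F$ is symmetrically analytic, so at each orbit $O\subset U$ it has a spherical series representation on some neighborhood $V\subset U$ of $O$: either $F=S(T)$ near a trivial orbit, or $F=S_1(T)+S_2(T)\diamond T$ near a nontrivial orbit, with $S,S_1,S_2$ spherical series centered at $O$ convergent on $V$. The goal is to show $F'$ is again symmetrically analytic, hence skew regular, and then iterate.

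First I would treat the building blocks. By Proposition \ref{(P)sphericalanalyticisskewregular}, a spherical power series $S(T)=\sum_{n\ge 0}q_nP_O(T)^n$ convergent on a region is skew regular there, and its skew derivative is $S'(T)=\sum_{n\ge 1}q_nP_O(T)^{n-1}P_O'(T)$, again convergent (absolutely and uniformly on compacta) on the same region. For a nontrivial orbit, $P_O'(T)=2T-2x_0$ has real ``$x_0$'' part and leading term $2T$, so $S'(T)=\bigl(\sum_{n\ge1}q_nP_O(T)^{n-1}\bigr)\diamond(2T-2x_0)$, and using that $\sum_{n\ge1}q_nP_O(T)^{n-1}$ is itself a spherical series (hence skew convex, so $\diamond$-distributes) one rewrites $S'$ in the standard form $A(T)+B(T)\diamond T$ with $A,B$ spherical series centered at $O$. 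Thus the skew derivative of a spherical series is again of the symmetrically-analytic local shape.

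Next I would handle the product term $S_2(T)\diamond T$. Here I invoke Part (2) of Proposition \ref{(P)SkewderivativeFirstproperties}: since $S_2(T)$ and the identity function $T$ are both skew regular on $V$, so is $S_2\diamond T$, and $(S_2\diamond T)'=S_2'\diamond T+S_2\diamond T'=S_2'\diamond T+S_2$ (using $T'=1$, which follows because $T$ is skew convex and $\mathcal{D}_q(T)=1$). Combining with Part (1) of the same proposition (additivity of the skew derivative), on $V$ we get $F'=S_1'+S_2'\diamond T+S_2$. By the previous paragraph $S_1'$, $S_2'$ are spherical-series expressions of the allowed form, $S_2$ is a spherical series, and $S_2'\diamond T$ can again be massaged into $A'(T)+B'(T)\diamond T$ form; collecting all pieces and using that a finite sum of spherical series centered at the same orbit is a spherical series centered at that orbit, we conclude $F'$ has a spherical series representation at $O$. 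Since $O\subset U$ was arbitrary, $F'$ is symmetrically analytic, hence skew regular by Theorem \ref{(T)skewregularequivalentspherical}. Iterating gives that $F$ is infinitely skew differentiable.

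The main obstacle I anticipate is purely bookkeeping: in the nontrivial-orbit case one must carefully verify that each manipulation — differentiating a spherical series, multiplying a spherical series by $T$ on the right via $\diamond$, and re-expressing $P_O'(T)$-multiples in the canonical $S_1+S_2\diamond T$ normal form — stays within the class of (pairs of) spherical series centered at the \emph{same} orbit $O$, and converges on a common neighborhood. The skew-convexity of spherical series (Proposition \ref{(P)limitofskewfunctions}) is what makes the $\diamond$-algebra behave, and the uniform convergence statements in Propositions \ref{(P)sphericalpower} and \ref{(P)sphericalanalyticisskewregular} supply the analytic control; once these are in place the argument is routine. (Alternatively, one could bypass the normal-form juggling by noting $F'=\mathcal{D}_q(F)$ evaluated appropriately and applying Proposition \ref{(P)skewdiffimpliespherical} locally, but the spherical-series route above is the cleanest.)
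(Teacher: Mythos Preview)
Your proposal is correct and follows essentially the same approach as the paper: the paper's proof simply says the result follows from Theorem \ref{(T)skewregularequivalentspherical} together with term-by-term differentiation of spherical series, leaving the details to the reader. Your write-up supplies exactly those details --- differentiating the local spherical representation via Proposition \ref{(P)sphericalanalyticisskewregular}, handling the $S_2\diamond T$ term with the Leibniz rule of Proposition \ref{(P)SkewderivativeFirstproperties}, and reassembling everything into the canonical $S_1+S_2\diamond T$ form --- so there is nothing to add.
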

	\begin{proof}
		This result follows from 	Theorem \ref{(T)skewregularequivalentspherical}  and the fact that spherical series can be differentiated term by term. The details are left to the reader. 
	\end{proof}
	In the following corollary, $(T-p)^{\langle -n-1 \rangle_r}$ denotes the $(n+1)$-th power of $(T-p)^{\langle -1 \rangle_r}$ with respect to the right skew product. 
		\begin{corollary}\label{(C)CauchyFormuladerivative}
			Let $F\colon  U\to \mathbb{H}$ be skew regular on a region  and $O\subset U$ be a nontrivial orbit such that $U$ contains the closure of the set 
		$$U(O,R)=\{q\in \mathbb{H}\,|\, |P_O(q)|<R \}.$$
		For any $I\in\mathbb{S}$, we have
		$$F^{(n)}(p)=\frac{n!}{2\pi}\int_{\gamma}F(q)\,(-Idq)\, (T-p)^{\langle -n-1 \rangle_r}(q), \text{ for all } p\in U(O,R),$$
		where 	 $\gamma$ is the boundary of the set $U(O,R)\cap (\mathbb{R}+\mathbb{R}I)$. 
	\end{corollary}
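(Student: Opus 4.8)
The plan is to prove the formula first on a single slice, where it becomes a classical fact about holomorphic functions, and then to propagate it to all of $U(O,R)$ by a skew-convexity argument. Fix $I\in\mathbb{S}$; the contour $\gamma=\partial\bigl(U(O,R)\cap(\mathbb{R}+\mathbb{R}I)\bigr)$ lies entirely in the slice $\mathbb{R}+\mathbb{R}I$. By Corollary \ref{(C)skewregularininiftlydiff} the function $F^{(n)}$ is skew regular on $U$, so Proposition \ref{(P)Cauchyformula} applied to $F^{(n)}$ gives, already for all $p\in U(O,R)$,
\[
F^{(n)}(p)=\frac{1}{2\pi}\int_{\gamma}F^{(n)}(q)\,(-Idq)\,(T-p)^{\langle -1 \rangle_r}(q).
\]
Hence it suffices to show that for every $p\in U(O,R)$ one has
\[
\int_{\gamma}F^{(n)}(q)\,(-Idq)\,(T-p)^{\langle -1 \rangle_r}(q)=n!\int_{\gamma}F(q)\,(-Idq)\,(T-p)^{\langle -n-1 \rangle_r}(q).
\]

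First I would verify this last identity for $p\in U(O,R)\cap(\mathbb{R}+\mathbb{R}I)$. For such a $p$ and for $q\in\gamma$ we have $pq=qp$, so by Remark \ref{(R)rightskewinverse} the right skew inverse reduces to $(T-p)^{\langle -1 \rangle_r}(q)=(q-p)^{-1}$, and, since the right skew product of functions at a point commuting with $p$ reduces to ordinary multiplication, $(T-p)^{\langle -n-1 \rangle_r}(q)=(q-p)^{-n-1}$. Choosing $J\in\mathbb{S}$ orthogonal to $I$ and writing $F=F_1+JF_2$ with $F_1,F_2$ valued in $\mathbb{R}+\mathbb{R}I$, the restriction of $F$ to the slice is holomorphic by Proposition \ref{(P)skewimpliesslice}; since on a slice the skew derivative agrees with the slice derivative (Proposition \ref{(P)skewimpliesslice}) and $F^{(n)}$ is again skew regular (Corollary \ref{(C)skewregularininiftlydiff}), the restriction of $F^{(n)}$ to the slice is the ordinary $n$-th complex derivative of $F_1+JF_2$, taken componentwise. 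Splitting the two integrals into their $F_1$- and $F_2$-parts (the constant $J$ pulls out of the second), the identity reduces to the elementary fact that $\oint_\gamma h^{(n)}(q)g(q)\,dq=(-1)^n\oint_\gamma h(q)g^{(n)}(q)\,dq$ for functions holomorphic near the closed contour $\gamma$, applied with $g(q)=(-I)(q-p)^{-1}$, whose $n$-th derivative is $(-I)(-1)^n n!\,(q-p)^{-n-1}$.

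To pass from the slice to all of $U(O,R)$, note that both sides of the identity, as functions of $p$, are skew convex on $U(O,R)$: the left-hand side equals $2\pi F^{(n)}(p)$ and $F^{(n)}$ is skew convex by Proposition \ref{(P)skewdiffimpliesconvex}; for the right-hand side I would argue, as in the proof of Proposition \ref{(P)Cauchyformula}, that $p\mapsto\int_\gamma F(q)\,(-Idq)\,(T-p)^{\langle -n-1 \rangle_r}(q)$ is skew convex because, for each fixed $q\notin O(p)$, the kernel $p\mapsto(T-p)^{\langle -n-1 \rangle_r}(q)$ is affine on every orbit, a property which integration preserves. Granting this, the two skew-convex functions agree on every orbit contained in $U(O,R)$: a skew-convex function on a nontrivial orbit is determined by its values at any two distinct points of that orbit (Proposition \ref{(P)CharacterzationSkewconvex}), and the slice $\mathbb{R}+\mathbb{R}I$ meets every orbit in $U(O,R)$ either in a real point or in two antipodal points of a $2$-sphere, so agreement on $U(O,R)\cap(\mathbb{R}+\mathbb{R}I)$ forces agreement everywhere, which is the claim.

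The main obstacle is precisely the skew convexity of the right-hand side, i.e.\ the affineness on orbits of $p\mapsto(T-p)^{\langle -n-1 \rangle_r}(q)$. For $n=0$ this is immediate from the explicit formula $(T-p)^{\langle -1 \rangle_r}(q)=(q^2-2\mathrm{Re}(p)q+|p|^2)^{-1}(q-\overline{p})$ of Proposition \ref{(P)T-c} together with Remark \ref{(R)rightskewinverse}, since on an orbit $O(p)$ the first factor is constant and $q-\overline{p}$ is affine in $p$. For general $n$ I would obtain the affineness by expanding the kernel about the orbit $O(p)$ via Lemma \ref{(L)SphericalT-pinverse}, whose spherical series, centred at $O(p)$ itself, collapses to its constant term on $O(p)$, and then using the near-ring identities of Section 2; alternatively, one can establish the higher-power analogue of Lemma \ref{(L)SphericalT-pinverse} and deduce the corollary by differentiating the spherical series representation of $F$ supplied by Proposition \ref{(P)skewdiffimpliespherical} term by term, using Proposition \ref{(P)sphericalanalyticisskewregular}. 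Either route is routine but mildly technical; the remaining ingredients are straightforward.
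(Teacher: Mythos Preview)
Your strategy coincides with the paper's: in view of Corollary~\ref{(C)skewregularininiftlydiff} one repeats the argument of Proposition~\ref{(P)Cauchyformula}, verifying the identity on the slice $\mathbb{R}+\mathbb{R}I$ via the classical Cauchy formula for derivatives and then extending by the left-skew-convexity of both sides in $p$. You have also correctly isolated the one nontrivial ingredient, namely the left-affineness of $p\mapsto(T-p)^{\langle -n-1\rangle_r}(q)$ on each orbit.

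One caveat: your route~(a) for this last point does not work as written. Lemma~\ref{(L)SphericalT-pinverse} expands $(T-p)^{\langle -1\rangle_r}$ as a spherical series in the \emph{integration variable} $q$, centred at a fixed orbit $O$ with $p\notin O$; it says nothing about the dependence on $p$, and centring at $O(p)$ itself is excluded by hypothesis. A cleaner direct argument is available. Since $P_{O(p)}(T)^{-1}$ takes values in the commutative subalgebra $\mathbb{R}[q]$, it is central for $\diamond_r$, and combining this with $(T-p)^{\langle -1\rangle_r}=P_{O(p)}(T)^{-1}\diamond_r(T-\overline{p})$ (Remark~\ref{(R)rightskewinverse}) yields
\[
(T-p)^{\langle -n-1\rangle_r}(q)=P_{O(p)}(q)^{-n-1}\sum_{k=0}^{n+1}\binom{n+1}{k}q^{\,n+1-k}(-\overline{p})^{\,k}.
\]
On an orbit of $p$ the factor $P_{O(p)}(q)^{-n-1}$ is constant, and each power $\overline{p}^{\,k}$ reduces to a left-affine expression in $p$ via $\overline{p}^{\,2}=2\,\mathrm{Re}(p)\,\overline{p}-|p|^{2}$, giving the required skew convexity. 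Your route~(b), going through the spherical expansion of $F$ from Proposition~\ref{(P)skewdiffimpliespherical} and differentiating term by term, is also valid but is a genuinely different proof of the corollary rather than a verification of this single kernel property.
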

	\begin{proof}
		In view of Corollary \ref{(C)skewregularininiftlydiff}, one can use an argument similar to the proof of Proposition \ref{(P)Cauchyformula} to derive the desired formula. 
	\end{proof}

\end{subsection}
	

\end{section}

\bibliographystyle{plain}
\bibliography{SAbiblan}

 \end{document}